\newtheorem{theorem}{Theorem}[section]
\newtheorem{lemma}[theorem]{Lemma}
\newtheorem{proposition}[theorem]{Proposition}
\newtheorem{corollary}[theorem]{Corollary}
\theoremstyle{definition}
\newtheorem{remark}{Remark}
\newtheorem{example}[theorem]{Example}
\newtheorem{conjecture}[theorem]{Conjecture}
\newcommand{\scp}[1]{\langle#1\rangle}
\begin{document}

\title{Going-up theorems for simultaneous Diophantine approximation}

\author{Johannes Schleischitz} 

\thanks{Middle East Technical University, Northern Cyprus Campus, Kalkanli, G\"uzelyurt \\
	jschleischitz@outlook.com, johannes@metu.edu.tr}


\begin{abstract}
We establish several new inequalities linking classical exponents
of Diophantine approximation associated to a real vector 
$\underline{\xi}=(\xi,\xi^{2},\ldots,\xi^{N})$, in
various dimensions $N$.
We thereby obtain variants, and partly refinements, of recent results 
of Badziahin and Bugeaud. We further implicitly 
recover inequalities of Bugeaud and Laurent
as special cases, with new proofs. Similar estimates concerning
general real vectors (not on the Veronese curve)
with $\mathbb{Q}$-linearly independent coordinates are addressed as well.
Our method is based on Minkowski's Second Convex Body Theorem,
applied in the framework of parametric geometry of numbers introduced by
Schmidt and Summerer. We also frequently employ
Mahler's Duality result on polar convex bodies.
\end{abstract}

\maketitle

{\footnotesize{

{\em Keywords}: exponents of Diophantine approximation, parametric geometry of numbers \\
Math Subject Classification 2010: 11J13, 11J83}}

\vspace{4mm}

\section{Introduction and outline}  \label{introd}

Let $N\geq 1$ be an integer and
$\underline{\xi}=(\xi_{1},\ldots,\xi_{N})\in\mathbb{R}^{N}$. 
We denote 
by  $\lambda_{N}(\underline{\xi})$ the ordinary exponent 
of simultaneous approximation, 
defined as the supremum of real $\lambda$ such that
\begin{equation}  \label{eq:lambdar}
1\leq \vert x\vert \leq X, \qquad\qquad \max_{1\leq j\leq N} \vert \xi_{j}x-y_{j}\vert \leq X^{-\lambda},  
\end{equation}
has a solution $(x,y_{1},\ldots,y_{N})\in{\mathbb{Z}^{N+1}}$
for arbitrarily large values of $X$. 
Let the ordinary exponent of linear form approximation
$w_{N}(\underline{\xi})$ be the supremum 
of real $w$ such that 
\begin{equation} \label{eq:lammda}
\max_{1\leq j\leq N} \vert x_{j}\vert\leq X, \qquad\qquad 
\vert x_{0}+\xi_{1}x_{1}+\cdots +\xi_{N}x_{N}\vert \leq X^{-w}
\end{equation}
has a solution 
$(x_{0},\ldots,x_{N})\in\mathbb{Z}^{N+1}$ 
for arbitrarily large $X$. Similarly, let
the uniform exponents $\widehat{\lambda}_{N}(\underline{\xi})$ and 
$\widehat{w}_{N}(\underline{\xi})$ respectively be
given as the respective suprema such that \eqref{eq:lambdar}
and \eqref{eq:lammda} have a solutions for {\em all} large $X$. 
Dirichlet's Theorem implies for any 
$\underline{\xi}\in\mathbb{R}^{N}$
\begin{equation}  \label{eq:dirichlet}
\lambda_{N}(\underline{\xi})\geq \widehat{\lambda}_{N}(\underline{\xi})\geq \frac{1}{N}, \qquad\qquad
w_{N}(\underline{\xi})\geq 
\widehat{w}_{N}(\underline{\xi})\geq N.
\end{equation}
In this paper, we are mostly concerned with
the special case $\underline{\xi}=(\xi,\xi^{2},\ldots,\xi^{N})$ for $\xi\in\mathbb{R}$, that is points on a Veronese curve. 
We then denote the 
exponents $\lambda_{N}(\underline{\xi}),w_{N}(\underline{\xi})$
simply by $\lambda_{N}(\xi), w_{N}(\xi)$ respectively, and likewise
the respective uniform exponents by $\widehat{\lambda}_{N}(\xi),\widehat{w}_{N}(\xi)$. 
\footnote{We believe
that this slight abuse of notation will 
improve readability of this paper, but want to remark 
that other notions for the exponents with respect to 
general $\underline{\xi}$ in $\mathbb{R}^N$, like $\omega_{N}(\underline{\xi}), \widehat{\omega}_{N}(\underline{\xi})$ and $\omega^{\ast}_{N}(\underline{\xi}), \widehat{\omega}_{N}^{\ast}(\underline{\xi})$, are more common.
The exponents on the Veronese curve are denoted
as in the standard literature.}
Thereby, we see that any real $\xi$ gives rise to four sequences of exponents
\begin{equation} \label{eq:joint}
(\lambda_{N}(\xi))_{N\geq 1}, \qquad (\widehat{\lambda}_{N}(\xi))_{N\geq 1},
\qquad (w_{N}(\xi))_{N\geq 1}, \qquad (\widehat{w}_{N}(\xi))_{N\geq 1}.
\end{equation}
Clearly the exponents
$\lambda_{N}(\xi), \widehat{\lambda}_{N}(\xi)$ are non-increasing with $N$ 
whereas the exponents $w_{N}(\xi), \widehat{w}_{N}(\xi)$ form
non-decreasing sequences. Ordinary exponents may take the value $+\infty$, whereas uniform exponents turn out to be always less than twice the trivial lower bounds in \eqref{eq:dirichlet}, see Remarks~\ref{reh}, \ref{reh4} below for refinements. Only for $N=2$ numbers satisfying $\widehat{\lambda}_{N}(\xi)>1/N$ or $\widehat{w}_{N}(\xi)>N$
have been found, see Roy~\cite{roy1},~\cite{roy4},  Fischler~\cite{fischler}, Bugeaud, Laurent~\cite{buglau} and Poels~\cite{poels}.

Investigation of 
these exponents with emphasis on Veronese curves is partly motivated by
well-known connections to the problem of approximation to real numbers by algebraic numbers (integers) related to Wirsing's problem, 
see Wirsing~\cite{wirsing}, Davenport, Schmidt~\cite{davsh} and Badziahin, Schleischitz~\cite{bswirsing}.
The main purpose of this paper is to establish
new inequalities interconnecting these exponents, in various dimensions.
Concretely,
in Sections~\ref{se2},~\ref{mix} we establish
several lower bounds for $\lambda_{k}(\xi)$ in terms of various exponents
of index $n\leq k$, and compare them.
Thereby, we complement a recent paper by Badziahin and Bugeaud~\cite{badbug},
as well as previous work of the author, 
especially~\cite{schlei}, \cite{ichann}. 
As a byproduct we further find new proofs of transference inequalities by Bugeaud, Laurent~\cite{bl2010}. Section~\ref{qli} treats analogous topics
for general $\mathbb{Q}$-linearly independent vectors. Estimates are
naturally weaker here and it is included rather for sake of completeness
and to motivate a comprehensive conjecture.
In Section~\ref{paramet} we introduce parametric geometry of numbers,
a key tool in the proofs of the main results carried out in Sections~\ref{6},~\ref{7}.
Finally in Section~\ref{pqli} we provide short proofs 
of the results from Section~\ref{qli}.

\section{Relations between exponents of simultaneous approximation } \label{se2}

\subsection{Going-up Theorems for the sequence $(\lambda_{N}(\xi))_{N\geq 1}$ }

We want to understand relations between the exponents
$\lambda_{N}(\xi)$ associated to real $\xi$ in various dimensions $N$,
thereby to draw information on the {\em joint spectrum} of 
the first sequence in \eqref{eq:joint}, i.e.
all possible sequences $(\lambda_{1}(\xi),\lambda_{2}(\xi),\ldots)$ 
induced by transcendental real $\xi$.
Bugeaud~\cite{bug} was the first to study this topic in detail. 
Among other results, he established the inequalities
\begin{equation} \label{eq:herz}
\lambda_{nk}(\xi)\geq \frac{\lambda_{k}(\xi)-n+1}{n},
\end{equation}
valid for positive integers $k,n$ and any real number $\xi$.
A generalization of \eqref{eq:herz} conjectured
by the author in~\cite{schlei} was proved by Badziahin and Bugeaud~\cite{badbug}.

\begin{theorem}[Badziahin, Bugeaud] \label{juppy}
	For any real number $\xi$ and integers $k\geq n\geq 1$ we have the estimate 
	\begin{equation}  \label{eq:jippy}
	\lambda_{k}(\xi)\geq \frac{n \lambda_{n}(\xi)+n-k}{k}.
	\end{equation}
\end{theorem}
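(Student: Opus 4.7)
The plan is to lift a good simultaneous approximation in dimension $n$ to one in dimension $k$, via an explicit algebraic construction that suffices in the divisible case $n\mid k$, and then to bridge the non-divisible case using the parametric geometry of numbers (PGN) framework set up in Section~\ref{paramet}.

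I would fix $\lambda$ slightly less than $\lambda_n(\xi)$, so that there are arbitrarily large $X$ admitting $(x,y_1,\ldots,y_n)\in\mathbb{Z}^{n+1}$ with $1\leq|x|\leq X$ and $\max_{1\leq j\leq n}|\xi^j x-y_j|\leq X^{-\lambda}$. Writing $k=rn+t$ with integer $r\geq 0$ and $1\leq t\leq n$, I would set $x':=x^{r+1}$ and, for each $j\in\{1,\ldots,k\}$ written as $j=an+b$ with $0\leq a\leq r$ and $1\leq b\leq n$, define $y_j':=y_n^a y_b x^{r-a}$ (with $y_j':=y_n^a x^{r+1-a}$ when $b=0$). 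The key identity
\[
\xi^j x^{r+1}-y_j' \;=\; x^{r-a}\bigl((\xi^n x)^a(\xi^b x)-y_n^a y_b\bigr),
\]
combined with the telescope $A^a-B^a=(A-B)\sum_{i=0}^{a-1}A^iB^{a-1-i}$ applied to $A=\xi^n x$, $B=y_n$ (both $O(X)$ with $|A-B|\leq X^{-\lambda}$) and $|\xi^b x-y_b|\leq X^{-\lambda}$, yields $|\xi^j x^{r+1}-y_j'|\ll X^{r-\lambda}$ uniformly in $j$. With $X':=X^{r+1}$ this gives $\lambda_k(\xi)\geq (\lambda+1)/(r+1)-1$, matching the claimed $(n\lambda+n-k)/k$ precisely when $r+1=k/n$, i.e.\ when $n\mid k$, which settles the divisible case.

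For the non-divisible case $1\leq t<n$, the above bound is strictly weaker than claimed, overshooting by a scaling factor $X^{(n-t)/n}$. To recover this I would invoke PGN: let $\mu_1^{(N)}(Q)\leq\cdots\leq\mu_{N+1}^{(N)}(Q)$ denote the successive minima of
\[
K_N(Q)=\{z\in\mathbb{R}^{N+1}:|z_0|\leq Q,\;\max_j|\xi^j z_0-z_j|\leq Q^{-1/N}\}
\]
with respect to $\mathbb{Z}^{N+1}$, satisfying $\prod_i\mu_i^{(N)}(Q)\asymp 1$ by Minkowski's second theorem and encoding $\lambda_N(\xi)$ via the liminf of $\log\mu_1^{(N)}(Q)/\log Q$. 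The idea is to apply the product construction simultaneously not to a single vector but to a full basis of $\mathbb{Z}^{n+1}$ realizing all the dim-$n$ successive minima, producing $k+1$ linearly independent lattice points in $\mathbb{Z}^{k+1}$ whose joint norms in $K_k(Q')$ are controlled by the full dim-$n$ profile. Minkowski's second theorem in dimension $k$, together with Mahler's duality on the polar bodies (as emphasized in the paper's abstract), should then yield the sharp upper bound on $\mu_1^{(k)}(Q')$, recover the exponent $(n\lambda+n-k)/k$, and produce the Bugeaud--Laurent transference inequalities as a byproduct.

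The main obstacle is precisely this non-divisible case: the single-vector lift wastes the $(n-t)$-dimensional slack in $\mathbb{Z}^{k+1}$, and extracting the missing factor $X^{(n-t)/n}$ requires interlacing the product construction with the Minkowski--Mahler constraints on \emph{all} the dim-$n$ minima (typically highly unbalanced, with $\mu_1^{(n)}$ very small and the others compensating). Getting this bookkeeping right between the primal body and its polar will be the most delicate technical step of the proof.
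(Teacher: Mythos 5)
Your divisible-case argument is correct and is genuinely more elementary than the paper's, but the non-divisible case --- which is where the real content of Theorem~\ref{juppy} lies --- is not actually carried out, and the mechanism you sketch for it does not work as stated.

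The explicit lift $x\mapsto x^{r+1}$, $y_j'=y_n^a y_b x^{r-a}$ with the telescoping estimate is sound and does recover $\lambda_k(\xi)\geq (\lambda+1)/(r+1)-1$, hence the theorem when $n\mid k$. This is essentially Bugeaud's original approach to \eqref{eq:herz} and is quite different from the paper's Section~\ref{7} proof, which works throughout in the dual (linear-form) picture via parametric geometry of numbers.

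For $n\nmid k$, however, the proposal has a genuine gap. The product construction makes sense for a \emph{single} primal lattice point $(x,y_1,\ldots,y_n)$, producing one new lattice point in $\mathbb{Z}^{k+1}$; but it is not a linear operation, so there is no way to ``apply it simultaneously to a full basis of $\mathbb{Z}^{n+1}$'' and obtain $k+1$ independent points of $\Lambda_{\underline\xi}$ in dimension $k$. Lattices are additive, and products of lattice vectors (or of their coordinates across different vectors) are not lattice vectors of the dimension-$k$ lattice. Moreover, even granting $k+1$ small primal points, Minkowski's second theorem would then give a \emph{lower} bound on $\mu_1^{(k)}$, whereas you need an \emph{upper} bound. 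The correct chain is: small \emph{dual} lattice points control the first $k$ dual minima from above, Minkowski II then forces the $(k+1)$-th dual minimum up, and Mahler's duality converts that into an upper bound on the first primal minimum.

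This is exactly where the paper diverges from your plan. Passing to the dual lattice, a dual point is a coefficient vector of a polynomial $P$, and the lift $P\mapsto T^i P$ (for $\xi\in(0,1)$ and $\deg T^i P\leq k$) is \emph{linear}, preserves $\Vert\cdot\Vert_\infty$, only shrinks $|P(\xi)|$, and manifestly keeps a family $\{T^{i}P_1\}\cup\{P_2,\dots,P_n\}$ linearly independent by a degree argument. That is the content of Lemma~\ref{lemuren} and the construction of the set $\mathscr R$ in Section~\ref{7}; feeding the resulting bounds into \eqref{eq:rrr}, \eqref{eq:mahler}, \eqref{eq:umrechnen} produces \eqref{eq:folgerunge0}, and Theorem~\ref{juppy} follows by inserting $\widehat\lambda_n(\xi)\geq 1/n$. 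So the missing idea in your proposal is precisely to replace the primal multiplicative lift by the dual polynomial-shift lift. Without it, the ``bookkeeping'' you defer to the end is not a technical nuisance but the entire remaining proof.
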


In the special case $\lambda_{n}(\xi)>1$ it had been known before, 
and if even $\lambda_{k}(\xi)>1$ then there is in fact equality, see~\cite[Corollary~1.10]{schlei}. In particular the estimate
is sharp in certain cases. It is tempting to believe that it 
is best possible for all reasonable parameters (i.e. if the 
bound becomes at least $1/k$). If the bound in \eqref{eq:jippy}
is less than $1$, 
this leaves some freedom for $\lambda_{k}(\xi)$. However,
when considering all large $k$ simultaneously,
stringent restrictions
on the joint spectrum were given in~\cite{ichann}.


We refine Theorem~\ref{juppy} by means of introducing uniform
exponents. We further include an alternative bound that is sometimes
stronger.

\begin{theorem} \label{gutersatze0}
	Let $k\geq n\geq 1$ be integers. For any real $\xi$ we have
	\begin{equation} \label{eq:folgerunge0}
	\lambda_{k}(\xi) \geq \frac{(n-1)\lambda_{n}(\xi)+(k-n)\widehat{\lambda}_{n}(\xi)+n-k}{(n-k)\widehat{\lambda}_{n}(\xi)+k-1}.
	\end{equation}
	Moreover, we have
	\begin{equation}  \label{eq:wanndenn}
	\lambda_{k}(\xi)\geq \frac{(n-1)\lambda_{n}(\xi)+
		(k-1)\widehat{\lambda}_{n}(\xi)+n-k}{(n-1)\lambda_{n}(\xi)-(k-1)\widehat{\lambda}_{n}(\xi)+n+k-2}.
	\end{equation}
\end{theorem}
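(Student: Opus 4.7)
The plan is to work in the Schmidt--Summerer parametric geometry of numbers, exactly as announced in the abstract. For each $N\geq 1$, I would consider the one-parameter family of convex bodies
\[
\mathcal{K}_N(Q)=\bigl\{(z_0,\dots,z_N)\in\mathbb{R}^{N+1}\ :\ |z_0|\leq Q,\ \max_{1\leq j\leq N}|z_j-z_0\xi^j|\leq Q^{-1/N}\bigr\},
\]
with $\mathbb{Z}^{N+1}$-successive minima $\mu_1^{(N)}(Q)\leq\dots\leq\mu_{N+1}^{(N)}(Q)$, and set $L_j^{(N)}(q)=\log\mu_j^{(N)}(e^q)$. Minkowski's Second Theorem yields $\sum_j L_j^{(N)}(q)=O(1)$, and a routine computation identifies $\lambda_N(\xi)$ and $\widehat\lambda_N(\xi)$ with specific M\"obius transforms of $\liminf_{q\to\infty} L_1^{(N)}(q)/q$ and $\limsup_{q\to\infty} L_1^{(N)}(q)/q$ respectively.

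Along a sequence $q_m\to\infty$ realizing $\lambda_n(\xi)$, I would first pick a primitive integer vector $\underline{v}_m\in\mathbb{Z}^{n+1}$ that is very short in $\mathcal{K}_n(e^{q_m})$. The hypothesis on $\widehat\lambda_n(\xi)$ controls $\mu_1^{(n)}(Q')$ for every $Q'$ in a long interval surrounding $e^{q_m}$, forcing additional short integer vectors in $\mathcal{K}_n(Q')$ which, for $Q'$ slightly larger than $e^{q_m}$, must be $\mathbb{Q}$-linearly independent of $\underline{v}_m$. Via Mahler's Duality these primal estimates translate into upper bounds on certain successive minima of the dual bodies $\mathcal{K}_n^{\ast}(Q')$.

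I would then apply the classical polynomial-shift trick: if $P(T)\in\mathbb{Z}[T]$ of degree $\leq n$ satisfies $|P(\xi)|$ small, the $k-n+1$ polynomials $T^iP(T)$ for $i=0,1,\dots,k-n$ yield linearly independent integer vectors of degree $\leq k$ with comparable height and the same order of smallness at $\xi$ (up to a factor of $|\xi|^i$). This delivers upper bounds on the first several successive minima $\mu_j^{(k)\ast}(\widetilde Q)$ of the dual $k$-dimensional body at a parameter $\widetilde Q$ tied to $e^{q_m}$. Dualizing back via Mahler converts these into lower bounds on the high minima $\mu_j^{(k)}(\widetilde Q)$ for $j$ close to $k+1$, and the Minkowski relation $\prod_j\mu_j^{(k)}(\widetilde Q)=O(1)$ in $\mathbb{R}^{k+1}$ then forces a matching upper bound on $\mu_1^{(k)}(\widetilde Q)$, equivalently the desired lower bound on $\lambda_k(\xi)$.

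The two distinct inequalities \eqref{eq:folgerunge0} and \eqref{eq:wanndenn} should emerge from two different admissible choices of $\widetilde Q$, equivalently from two different ways of distributing the $k-n+1$ lifted short vectors among the successive minima of $\mathcal{K}_k$. The main obstacle, I expect, will be the bookkeeping in this final step: carefully tracking which minima are constrained by which lifted vectors and extracting the clean rational expressions in $\lambda_n,\widehat\lambda_n,n,k$. The asymmetric form of the two denominators strongly suggests a linear-programming style optimization over the admissible combined graphs of the $L_j^{(n)}(q)$, which should be conceptually routine once the geometric setup is in place.
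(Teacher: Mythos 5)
Your outline matches the paper's own proof almost step for step: Schmidt--Summerer parametric functions, the slope analysis of $L_{n,1}$ and $L_{n,2}$ driven by $\lambda_n$ and $\widehat\lambda_n$, Mahler duality to pass to the linear-form minima $\psi_{n,j}^{\ast}$, the polynomial-shift trick $P\mapsto T^iP$ (made precise in the paper's Lemma~\ref{lemuren}, which rescales $q\mapsto q'=q\,\tfrac{(n+1)k}{n(k+1)}$), and closing with Minkowski's Second Theorem together with \eqref{eq:schranke}. This is not a genuinely different route, so I will only flag the one place where your guess diverges from what actually happens.

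The two bounds \eqref{eq:folgerunge0} and \eqref{eq:wanndenn} do \emph{not} come from two different choices of $\widetilde Q$ (the paper uses the same transformed parameter $q'$ in both proofs), and they do not come from different ways of placing the $k-n+1$ shifted copies of $P_1$. The actual mechanism is the choice of how many of the \emph{other} dual minimizing polynomials $P_2,\dots,P_n$ accompany the shifted copies in the lifted linearly independent set $\mathscr R$: for \eqref{eq:folgerunge0} one takes all of $P_2,\dots,P_n$ (so $|\mathscr R|=k$, and the bound on the sum $\sum_{j\le n}\psi_{n,j}^\ast(Q)$ coming from $\lambda_n$ via \eqref{eq:schranke} is invoked), whereas for \eqref{eq:wanndenn} one drops $P_n$ (so $|\mathscr R|=k-1$, two dual minima remain uncontrolled, and the Minkowski bound with a factor $\tfrac12$ together with the sharper estimate $\sum_{j\le n-1}\psi_{n,j}^\ast(Q)\le-\alpha-\gamma$ is used). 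You should also make explicit that for the lifting to dimension $k$ you must include those extra dual polynomials $P_2,\dots$ and not merely the shifts of a single $P$; otherwise only $k-n+1$ of the $k+1$ lifted minima are constrained and the resulting Minkowski inequality is too weak. These are exactly the bookkeeping details you anticipate, and once made precise they reproduce the paper's argument.
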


\begin{remark} \label{reh} 
	For every $n\geq 1$ and transcendental real $\xi$ we have $\widehat{\lambda}_{n}(\xi)\leq \frac{2}{n+1}$, in fact
    \begin{equation} \label{eq:rlsc}
    \widehat{\lambda}_{3}(\xi)< 0.4246, \qquad 
    \widehat{\lambda}_{n}(\xi)\leq \frac{2}{n+1} \quad \text{if}\; n\; \text{odd},
    \qquad \widehat{\lambda}_{n}(\xi)<\frac{2}{n+1} \quad 
    \text{if}\; n\; \text{even},
    \end{equation}
    follows from Roy~\cite{roy2}, Laurent~\cite{lau} and Schleischitz~\cite[Section~4]{ichann}
    respectively. See also~\cite{davsh},~\cite{ichrelations}.
	If $\xi$ satisfies $\lambda_{n}(\xi)>1$ then $\widehat{\lambda}_{n}(\xi)=1/n$,
	see~\cite{schlei}.
\end{remark}

\begin{remark} \label{hirsch}
	We could derive from \eqref{eq:wanndenn} with 
	$\widehat{\lambda}_{n}(\xi)\geq 1/n$ that 
	$\lambda_{k}(\xi)\geq
	(n\lambda_{n}(\xi) + n - k + 1)/(n\lambda_{n}(\xi) + k + n - 1)$.
	However, this turns out not to be of interest as it never both
	exceeds the bound in Theorem~\ref{juppy} and $1/k$.
\end{remark}

For $\widehat{\lambda}_{n}(\xi)=1/n$ the claim
\eqref{eq:folgerunge0} becomes just 
Theorem~\ref{juppy}, otherwise we get a stronger result.
Thereby in particular we provide a new
proof of Theorem~\ref{juppy} that is significantly different 
from the one given in~\cite{badbug}, and from the proofs of
\eqref{eq:herz}, \eqref{eq:jippy}. 
Our proof 
is based on Minkowski's Theorem that we apply in the formalism of
parametric geometry of numbers, and
Mahler's Theorem
on polar convex bodies. In particular, a novelty in our approach are its
close ties to the dual linear form problem intorduced in \eqref{eq:lammda}. 
%
	%
	%
%

We enclose a few more remarks on \eqref{eq:folgerunge0}, \eqref{eq:wanndenn}.
In view of the last claim in Remark~\ref{reh}, in \eqref{eq:folgerunge0}
we need $\lambda_{n}(\xi)\leq 1$ for an improvement of Theorem~\ref{juppy}.
As indicated above, by \eqref{eq:folgerunge0}
equality in \eqref{eq:jippy} implies the identities
\[
\widehat{\lambda}_{i}(\xi)=\frac{1}{i}, \qquad\qquad n\leq i\leq k,
\]
a new result in this generality.
If $\lambda_{k}(\xi)>1$, it is already implied
by~\cite[Theorem~1.12]{schlei}, in fact its proof shows the analogous claim
up to $i=2k-1$. 
The special case $n=2$ is of particular interest,
as only then numbers $\xi$ satisfying $\widehat{\lambda}_{n}(\xi)>1/n$
have been found. 
Classical examples are extremal
numbers as defined by Roy~\cite{roy1} and 
Sturmian continued fractions, see Bugeaud, Laurent~\cite{buglau},
we omit definitions here. See also Poels~\cite{poels}.
For $n=2, k=3$ and $\xi$ an extremal number, 
the identities from~\cite{roy1},~\cite{ichlondon}
\begin{equation}  \label{eq:ensf}
\lambda_{2}(\xi)=1, \qquad \widehat{\lambda}_{2}(\xi)=\frac{\sqrt{5}-1}{2}=0.6180\ldots, \qquad \lambda_{3}(\xi)=\frac{1}{\sqrt{5}}=0.4472\ldots,
\end{equation}
induce equality in both \eqref{eq:folgerunge0}
and \eqref{eq:wanndenn}. While some extremal numbers are Sturmian
continued fractions,
the identity does not extend to other Sturmian continued fractions $\xi$,
nor does it to larger values of $k$.
For $n=2, k=4$ and $\xi$ an extremal number, \eqref{eq:wanndenn}
still provides a non-trivial bound that reads
\begin{equation} \label{eq:jox}
\lambda_{4}(\xi)\geq \frac{6\sqrt{5}-5}{31}= 0.2715\ldots> \frac{1}{4}.
\end{equation}
However, a stronger bound $\lambda_{4}(\xi)\geq (\sqrt{5}-1)/4= 0.3090\ldots$ with conjectured identity was established 
in~\cite{ichlondon}.
Moreover, we may alternatively derive \eqref{eq:jox} 
from \eqref{eq:bbs2} below.

Combining \eqref{eq:wanndenn} with an inequality of Jarn\'ik~\cite{ja1}
that reads
\begin{equation} \label{eq:mam}
\lambda_{2}(\xi)\geq \frac{\widehat{\lambda}_{2}(\xi)^2}{1-\widehat{\lambda}_{2}(\xi)}
\end{equation}
we can formulate a bound for $\lambda_{k}$ 
only in terms of 
$\widehat{\lambda}_{2}$ that is not trivial when $k\in \{3,4\}$.

\begin{corollary}  \label{einkor}
	For any $\xi$ we have
	%
	\begin{equation}  \label{eq:lov}
	\lambda_{3}(\xi)\geq \frac{-\widehat{\lambda}_{2}(\xi)^2+3\widehat{\lambda}_{2}(\xi)-1}{3\widehat{\lambda}_{2}(\xi)^2-5\widehat{\lambda}_{2}(\xi)+3}
	\end{equation}
	and
	\begin{equation}  \label{eq:lov2}
	\lambda_{4}(\xi)\geq \frac{-2\widehat{\lambda}_{2}(\xi)^2+5\widehat{\lambda}_{2}(\xi)-2}{4\widehat{\lambda}_{2}(\xi)^2-7\widehat{\lambda}_{2}(\xi)+4}.
	\end{equation}
\end{corollary}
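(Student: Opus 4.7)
The plan is to substitute $n=2$ and $k\in\{3,4\}$ into \eqref{eq:wanndenn} and then eliminate the dependence on $\lambda_{2}(\xi)$ using Jarn\'ik's inequality \eqref{eq:mam}. Writing $\lambda=\lambda_{2}(\xi)$ and $\widehat{\lambda}=\widehat{\lambda}_{2}(\xi)$, the case $n=2$ of \eqref{eq:wanndenn} reads
\[
\lambda_{k}(\xi)\;\geq\; \frac{\lambda+(k-1)\widehat{\lambda}+2-k}{\lambda-(k-1)\widehat{\lambda}+k}.
\]
Before substituting Jarn\'ik's lower bound for $\lambda$, I first check that the right-hand side, viewed as a function of $\lambda$ with $\widehat{\lambda}$ fixed, is monotone increasing. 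A direct differentiation shows the derivative has numerator $2(k-1)(1-\widehat{\lambda})$, which is positive since $\widehat{\lambda}\leq\widehat{\lambda}_{2}(\xi)<1$ (indeed $\widehat{\lambda}_{2}(\xi)\leq(\sqrt{5}-1)/2<1$ by the bound recalled in Remark~\ref{reh}). Consequently, replacing $\lambda$ by the smaller quantity $\widehat{\lambda}^{2}/(1-\widehat{\lambda})$ only weakens the bound, which is what we want.

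The remainder is routine algebraic manipulation. For each $k\in\{3,4\}$, one substitutes $\lambda=\widehat{\lambda}^{2}/(1-\widehat{\lambda})$ into the displayed inequality above and multiplies numerator and denominator by $1-\widehat{\lambda}$. For $k=3$ the numerator becomes $\widehat{\lambda}^{2}+(2\widehat{\lambda}-1)(1-\widehat{\lambda})=-\widehat{\lambda}^{2}+3\widehat{\lambda}-1$, and the denominator becomes $\widehat{\lambda}^{2}+(3-2\widehat{\lambda})(1-\widehat{\lambda})=3\widehat{\lambda}^{2}-5\widehat{\lambda}+3$, giving \eqref{eq:lov}. The case $k=4$ is entirely analogous and produces \eqref{eq:lov2}.

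The main obstacle, such as it is, is the monotonicity verification; everything else is bookkeeping. One subtlety worth flagging in passing is that the denominators $3\widehat{\lambda}^{2}-5\widehat{\lambda}+3$ and $4\widehat{\lambda}^{2}-7\widehat{\lambda}+4$ are both positive on the admissible range $\widehat{\lambda}\in[1/2,(\sqrt{5}-1)/2]$ (in fact on all of $[0,1]$, since their discriminants are negative), so no sign-flip issue arises when clearing denominators. Thus the corollary follows immediately from \eqref{eq:wanndenn} and \eqref{eq:mam} by substitution, with no further input needed.
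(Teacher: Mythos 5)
Your proof is correct and follows exactly the route the paper indicates (combining \eqref{eq:wanndenn} with $n=2$, $k\in\{3,4\}$, Jarn\'ik's bound \eqref{eq:mam}, and the monotonicity of the resulting rational function in $\lambda_2$). The algebraic simplifications check out, and your observation that the final denominators have negative discriminant and are therefore always positive correctly disposes of any sign concerns.
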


If $\xi$ is an extremal number, \eqref{eq:lov} becomes an identity
again according to \eqref{eq:ensf}, and  \eqref{eq:lov2} becomes \eqref{eq:jox}. The bound \eqref{eq:lov} exceeds $1/3$
if $\widehat{\lambda}_{2}(\xi)>(7-\sqrt{13})/6=0.5657\ldots$,
and \eqref{eq:lov2} exceeds $1/4$ for $\widehat{\lambda}_{2}(\xi)>(9-\sqrt{17})/8=0.6096\ldots$,
just slightly below the maximum possible value in \eqref{eq:ensf}
obtained for extremal numbers~\cite{roy1}.
Corollary~\ref{einkor} is stronger than what can be derived from 
combining \eqref{eq:folgerunge0} with \eqref{eq:mam}.
Similar bounds for $\lambda_{k}(\xi)$ in terms
of $\widehat{\lambda}_{n}(\xi)$ can be obtained for $n>2$ as well via the 
implicit 
estimates \eqref{eq:nonedtor} below that originate in~\cite{mamo} and generalize \eqref{eq:mam},
but their formulation becomes cumbersome. 
 
 	%
 	%
 	%
 	%
%

\subsection{Comparison~\eqref{eq:folgerunge0} vs \eqref{eq:wanndenn}} 

We discuss when \eqref{eq:wanndenn} both improves on \eqref{eq:folgerunge0} and exceeds the trivial bound $1/k$.
In  view of Remark~\ref{reh} and Remark~\ref{hirsch}, we may assume
$\widehat{\lambda}_{n}(\xi)>1/n$ and $\lambda_{n}(\xi)\leq 1$.
We take into account the estimates \eqref{eq:rlsc} and
distinguish 3 cases.

\begin{itemize}
	\item \underline{Case 1}: $k=2n-1$. If  $\lambda_{n}(\xi)=1$,
the bounds in \eqref{eq:wanndenn} and \eqref{eq:folgerunge0} coincide, regardless of the value of $\widehat{\lambda}_{n}(\xi)$. If  $\lambda_{n}(\xi)<1$, then the bound \eqref{eq:wanndenn} is stronger
for any $\widehat{\lambda}_{n}(\xi)$. Moreover, \eqref{eq:wanndenn}
is non-trivial if
\begin{equation}  \label{eq:getss}
\widehat{\lambda}_{n}(\xi)> \frac{n+1+(1-n)\lambda_{n}(\xi)}{2n}.
\end{equation}
Thus for instance if $\lambda_{n}(\xi)=1$ and $\widehat{\lambda}_{n}(\xi)>1/n$, both are guaranteed.
As we decrease $\lambda_{n}(\xi)$,
condition \eqref{eq:getss} on $\widehat{\lambda}_{n}(\xi)$ becomes
more stringent, if $\lambda_{n}(\xi)\leq 1-\frac{2}{n-1}$
then $\widehat{\lambda}_{n}(\xi)\geq 2/n$ which contradicts \eqref{eq:rlsc}. 
	So for \eqref{eq:wanndenn}
to be interesting, we require
\[
\lambda_{n}(\xi)\in (1-\frac{2}{n-1},1],
\]
in fact a slightly larger lower bound can be stated.
 
 \item
\underline{Case 2}: $k<2n-1$. The claim \eqref{eq:wanndenn} turns out stronger 
than \eqref{eq:folgerunge0} as soon as
\begin{equation} \label{eq:A1}
\widehat{\lambda}_{n}(\xi) > \frac{(1-n)\lambda_{n}(\xi)+k-n}{k-2n+1},
\end{equation}
(or $\lambda_{n}(\xi)< \frac{k-n}{n-1}$ but this is of no interest here)
and non-trivial if
\begin{equation}  \label{eq:A2}
\widehat{\lambda}_{n}(\xi) > 
\frac{(1-n)\lambda_{n}(\xi)+k-n+2}{k + 1}.
\end{equation}
The bound in \eqref{eq:A1} rises with respect to $\lambda_{n}(\xi)$ whereas
\eqref{eq:A2} decays, and they coincide for $\lambda_{n}(\xi)=(k-n+1)/n<1$
which yields $1/n$ in both expressions.
Thus as soon as $\lambda_{n}(\xi)\in((k-n+1)/n,1]$ we only have
to satisfy \eqref{eq:A1}, i.e. $\widehat{\lambda}_{n}(\xi)>c>1/n$ 
where $c$ depends on $\lambda_{n}(\xi)$,
and have both requirements met. It can be checked that
here we do not get
any restrictions from \eqref{eq:rlsc} under our assumptions above.
%
%

\item
\underline{Case 3}: $k>2n-1$. Then for improving \eqref{eq:folgerunge0},
conversely to \eqref{eq:A1} we need 
\begin{equation} \label{eq:B1}
\widehat{\lambda}_{n}(\xi) < \frac{(1-n)\lambda_{n}(\xi)+k-n}{k-2n+1},
\end{equation}
whereas the condition \eqref{eq:A2} for non-triviality remains unchanged.
A discussion of when \eqref{eq:A2}, \eqref{eq:B1} 
can occur simultaneously, upon taking into account $\lambda_{n}(\xi)\leq 1$ and \eqref{eq:rlsc}, after some calculation
finally implies the necessary conditions
\begin{equation}  \label{eq:satt}
k=2n, \qquad  \; n\neq 3,    \qquad
1-\frac{n+2}{n^2-n} < \lambda_{n}(\xi)\leq 1.
\end{equation}
We notice that for
$n=2$, $k=4$ and extremal numbers $\xi$, condition \eqref{eq:satt} is satisfied and an improvement is indeed obtained, 
see \eqref{eq:jox}.
%
%
\end{itemize}

%
%

\section{Relations involving simultaneous and linear form exponents} \label{mix}

\subsection{Mixed properties}
Now we want to find relations that also contain linear form exponents
$w_{N}(\xi), \widehat{w}_{N}(\xi)$. The following relation
was already implictly derived in~\cite{ichann} 
and obtained with a different proof and explicitly formulated
by Badziahin, Bugeaud~\cite{badbug}.

\begin{theorem}[Badziahin, Bugeaud; Schleischitz] \label{bbyd}
	Let $k\geq n\geq 1$ be integers and 
	$\xi$ be a real number. We have
	\begin{equation} \label{eq:bbs2}
	\lambda_{k}(\xi)\geq \frac{ w_{n}(\xi)-k+n}{(n-1)w_{n}(\xi)+k}.
	\end{equation}
\end{theorem}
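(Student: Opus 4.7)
The plan is to follow the strategy of Theorems~\ref{juppy} and~\ref{gutersatze0}, combining Minkowski's Second Theorem with Mahler's polar-body duality. I would first fix $w<w_n(\xi)$ and, by the definition of $w_n$, extract arbitrarily large $X$ and integer polynomials $P(T)=p_0+p_1T+\cdots+p_nT^n$ with $\max_{1\leq j\leq n}|p_j|\leq X$ and $|P(\xi)|\leq X^{-w}$; the identity $p_0=P(\xi)-\sum_{j\geq 1}p_j\xi^j$ then forces $|p_0|\ll_\xi X$ automatically. The central observation is that the $k-n+1$ shifted polynomials $T^iP(T)$, $i=0,1,\ldots,k-n$, have coefficient vectors $v_i\in\mathbb{Z}^{k+1}$ that are linearly independent (staircase pattern), have entries of absolute value $\ll_\xi X$, and satisfy $|v_i\cdot(1,\xi,\ldots,\xi^k)|=|\xi^iP(\xi)|\ll_\xi X^{-w}$. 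They thus furnish $k-n+1$ linearly independent lattice points in a bounded multiple of the linear-form body
\[
\mathcal{C}(X) \;=\; \{x\in\mathbb{R}^{k+1}:|x_0+\xi x_1+\cdots+\xi^k x_k|\leq X^{-w},\;|x_j|\leq X\text{ for }1\leq j\leq k\},
\]
yielding $\mu_{k-n+1}(\mathcal{C}(X))\ll_\xi 1$ for its successive minima.

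Next, I would recast the target $\lambda_k(\xi)\geq \lambda$, for any $\lambda<(w+n-k)/((n-1)w+k)$, as the existence, for some $X'$, of a nonzero integer point in the simultaneous-approximation body
\[
\mathcal{S}(X',\lambda) \;=\; \{x\in\mathbb{R}^{k+1}:|x_0|\leq X',\;|x_j-\xi^jx_0|\leq X'^{-\lambda}\text{ for }1\leq j\leq k\}.
\]
For $X'$ large enough that $X'^{-\lambda}<1$, any such nonzero lattice point automatically has $x_0\neq 0$, producing a genuine simultaneous approximation. A direct computation of the dual linear forms shows that the polar body $\mathcal{S}(X',\lambda)^\circ$ has the same shape as $\mathcal{C}(X)$: up to Mahler-type constants depending only on $k$ in the successive minima, it is a scalar multiple $s\cdot\mathcal{C}(X)$ with $X'=X^{(1+w)/(1+\lambda)}$ and $s=X^{(1-w\lambda)/(1+\lambda)}$.

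Transporting $\mu_{k-n+1}(\mathcal{C}(X))\ll 1$ through this homothety gives $\mu_{k-n+1}(\mathcal{S}(X',\lambda)^\circ)\ll X^{(1-w\lambda)/(1+\lambda)}$. Then Minkowski's Second Theorem applied to $\mathcal{S}(X',\lambda)^\circ$, whose volume is $\asymp_k X'^{k\lambda-1}$ via Mahler's volume inequality against $\mathrm{vol}(\mathcal{S}(X',\lambda))\asymp X'^{1-k\lambda}$, forces the product of the top $n$ minima to satisfy
\[
\mu_{k-n+2}\cdots\mu_{k+1}\bigl(\mathcal{S}(X',\lambda)^\circ\bigr) \;\gg_k\; X^{\,\frac{(n-k+w)-\lambda((n-1)w+k)}{1+\lambda}},
\]
so that $\mu_{k+1}(\mathcal{S}(X',\lambda)^\circ)$ obeys the same bound to the power $1/n$. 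Mahler's minima duality $\mu_1(\mathcal{S}(X',\lambda))\cdot\mu_{k+1}(\mathcal{S}(X',\lambda)^\circ)\asymp_k 1$ then implies $\mu_1(\mathcal{S}(X',\lambda))\ll 1$ whenever the displayed exponent is nonnegative, which rearranges exactly to $\lambda\leq (w+n-k)/((n-1)w+k)$. Letting $\lambda\nearrow(w+n-k)/((n-1)w+k)$ and $w\nearrow w_n(\xi)$ finishes the proof.

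The main technical hurdle I anticipate is the clean execution of the dual computation identifying $\mathcal{S}(X',\lambda)^\circ$ with a scalar multiple of $\mathcal{C}(X)$, including the matching choice $X'=X^{(1+w)/(1+\lambda)}$ and the bookkeeping of multiplicative constants that it entails. Once that is in place, the arithmetic combining Minkowski's and Mahler's theorems is essentially forced by the volume constraints and reproduces precisely the key quantity $(n-k+w)-\lambda((n-1)w+k)$ governing the claim.
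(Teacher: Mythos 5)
Your argument is correct and, although the paper itself does not reprove Theorem~\ref{bbyd} (it is quoted from \cite{badbug} and \cite{ichann}), your proof employs exactly the same toolkit the paper deploys for its new results in Sections~\ref{6}--\ref{7}: shifted polynomials $T^iP(T)$ give $k-n+1$ linearly independent lattice points in a small multiple of the linear-form body (cf.\ the sets $\mathscr{R}$ built via Lemma~\ref{lemuren}), Minkowski's Second Theorem then forces the top successive minima of the polar body to be large, and Mahler's duality converts that into smallness of the first minimum of the simultaneous-approximation body. The exponent arithmetic checks out: with $\mathrm{vol}(\mathcal{S}(X',\lambda)^\circ)\asymp_k X'^{k\lambda-1}$ and $\mu_{k-n+1}(\mathcal{S}^\circ)\ll X^{(1-w\lambda)/(1+\lambda)}$ one indeed obtains $\mu_{k-n+2}\cdots\mu_{k+1}\gg X^{[(w+n-k)-\lambda((n-1)w+k)]/(1+\lambda)}$, giving the bound after taking $n$-th roots and dualizing. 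One small slip worth flagging: if $\mathcal{S}(X',\lambda)^\circ = s\cdot\mathcal{C}(X)$ as boxes, the correct scaling factor is $s=X^{(w\lambda-1)/(1+\lambda)}$ rather than $X^{(1-w\lambda)/(1+\lambda)}$; however, since $\mu_j(s\mathcal{C})=\mu_j(\mathcal{C})/s$, your downstream estimate $\mu_{k-n+1}(\mathcal{S}^\circ)\ll X^{(1-w\lambda)/(1+\lambda)}$ and all subsequent arithmetic are nonetheless consistent. The overall approach thus aligns with the one the author indicates lies behind the result in \cite{ichann}, rendered here in the language of Minkowski/Mahler for convex bodies rather than the parametric $L_{N,j}^{\ast}$ formalism used elsewhere in the paper.
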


The bound exceeds $1/k$ iff $w_{n}(\xi)>k$. For consequences of Theorem~\ref{bbyd} 
regarding the Hausdorff dimensions of the level sets 
$
\{\xi\in\mathbb{R}: \lambda_{N}(\xi)\geq \lambda\}$ and
$\{\xi\in\mathbb{R}: \lambda_{N}(\xi)= \lambda\}$
for $\lambda\in[1/N,\infty]$,
see~\cite{badbug}. In this note we put no emphasis
on metrical aspects, see however Section~\ref{metrisch}.
We remark that combining \eqref{eq:bbs2} for $n=2$ and $k\in\{3,4\}$ with
Jarn\'ik's identity~\cite{jahrnik} and another estimate of Jarn\'ik~\cite{ja2} given as
\begin{equation} \label{eq:jhr}
w_{2}(\xi)\geq \widehat{w}_{2}(\xi)^2-\widehat{w}_{2}(\xi), \qquad
\widehat{w}_{2}(\xi)=\frac{1}{1-\widehat{\lambda}_{2}(\xi)}
\end{equation}
yields another proof of Corollary~\ref{einkor}. In particular, for $n=2, k=3$,
Theorem~\ref{bbyd} is again sharp when $\xi$ is an extremal number,
and also for any Sturmian continued fraction $\xi$ as
follows from~\cite{ichjnt}.
We complement Theorem~\ref{bbyd} with inequalities
containing uniform exponents again. Our first estimate reads as follows.

\begin{theorem} \label{t2}
	Let $k\geq n\geq 2$ be integers and 
	$\xi$ be a real number. We have
	\begin{equation}  \label{eq:steuern}
	\lambda_{k}(\xi) \geq  
	\frac{w_{n}(\xi)\widehat{w}_{n}(\xi)-
		w_{n}(\xi)+(n-k)\widehat{w}_{n}(\xi)}{(n-2)w_{n}(\xi)\widehat{w}_{n}(\xi)+w_{n}(\xi)
		+(k-1)\widehat{w}_{n}(\xi)}.
	\end{equation}
	%
	%
	%
	%
	%
	%
	%
	%
	%
\end{theorem}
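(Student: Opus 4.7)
The plan is to work within the framework of parametric geometry of numbers for the Veronese vector $\underline{\eta}=(1,\xi,\xi^{2},\ldots,\xi^{k})\in\mathbb{R}^{k+1}$, along the same lines as the proof of Theorem~\ref{gutersatze0}, combining Minkowski's Second Convex Body Theorem with Mahler's duality. For parameters $Q,T>0$ consider the convex body
\[
K(Q,T)=\{\mathbf{x}=(x_{0},\ldots,x_{k})\in\mathbb{R}^{k+1}:|x_{i}|\leq Q\ (1\leq i\leq k),\ |\scp{\mathbf{x},\underline{\eta}}|\leq T\},
\]
with successive minima $\mu_{1}\leq\cdots\leq\mu_{k+1}$ relative to $\mathbb{Z}^{k+1}$. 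Minkowski's Second Theorem gives $\mu_{1}\cdots\mu_{k+1}\asymp_{k}(Q^{k}T)^{-1}$, while Mahler's theorem identifies the polar $K(Q,T)^{\circ}$, up to uniform constants, with the simultaneous approximation body for $(\xi,\xi^{2},\ldots,\xi^{k})$. Via the duality relation $\mu_{k+1}\cdot\mu_{1}(K(Q,T)^{\circ})\asymp_{k}1$, any lower bound on $\mu_{k+1}$ translates into an upper bound on the first minimum of $K(Q,T)^{\circ}$, hence into the desired lower bound on $\lambda_{k}(\xi)$.

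First we fix $\varepsilon>0$ and set $w=w_{n}(\xi)-\varepsilon$, $\widehat{w}=\widehat{w}_{n}(\xi)-\varepsilon$. By definition of $w_{n}$, there is an unbounded sequence $H_{\ell}\to\infty$ together with polynomials $P_{\ell}\in\mathbb{Z}[X]$ of degree $\leq n$ whose coefficients are bounded by $H_{\ell}$ and which satisfy $|P_{\ell}(\xi)|\leq H_{\ell}^{-w}$; by definition of $\widehat{w}_{n}$, at every large $H$ there is $R_{H}\in\mathbb{Z}[X]$ of degree $\leq n$ with coefficient bound $H$ and $|R_{H}(\xi)|\leq H^{-\widehat{w}}$. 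The crucial observation is that, for any such polynomial $S$ and any $j\in\{0,1,\ldots,k-n\}$, the shifted polynomial $X^{j}S(X)$ has degree $\leq k$, the same coefficient heights as $S$, and value $|\xi|^{j}|S(\xi)|$ at $\xi$; the $k-n+1$ shifts $S,XS,\ldots,X^{k-n}S$ are $\mathbb{Z}$-linearly independent. Applying this operation to $P_{\ell}$ and to $R_{H}$ will produce, after suitable rescaling, two families of $k-n+1$ linearly independent lattice vectors controlling successive minima of $K(Q,T)$.

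Next we specialize $Q=H_{\ell}$ along the ordinary sequence and let $H\in[1,Q]$ play the role of a free intermediate scale; choosing $T$ appropriately, the shifts of $P_{\ell}$ will bound the first $k-n+1$ successive minima of $K(Q,T)$ by an explicit quantity depending on $w$, while the shifts of $R_{H}$ --- viewed inside $K(Q,T)$ rather than $K(H,\cdot)$ --- will bound a further batch of minima by a quantity depending on $\widehat{w}$ and on the ratio $H/Q$. Provided that the two families jointly span a subspace of dimension at least $k$, Minkowski's product relation forces $\mu_{k+1}\gg M(H,Q;w,\widehat{w})$, and optimizing $H$ relative to $Q$ together with the limit $\varepsilon\to 0$ should yield precisely the algebraic expression on the right-hand side of \eqref{eq:steuern} after the Mahler duality step above. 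The main technical obstacle will be the rank condition: the combined $2(k-n+1)$ shifted polynomials span at most $2(k-n+1)$ dimensions, which is $\geq k$ only when $k\leq 2n-1$. For larger $k$ one must supplement the construction with uniform approximations at additional intermediate scales $H',H'',\ldots$, or by passing through Mahler duality in the auxiliary dimension $n+1$, in order to produce further linearly independent lattice vectors; executing this rank analysis together with the consequent optimization is the bulk of the technical work needed to reach the precise form of \eqref{eq:steuern}.
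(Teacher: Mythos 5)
Your plan does not deliver the theorem in the generality it claims, and the way you flag the issue at the end ("the main technical obstacle will be the rank condition...") is the honest symptom of a genuine gap rather than a loose end.

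Concretely: you propose producing two families of shifted polynomials, $P,XP,\ldots,X^{k-n}P$ from an ordinary-exponent realizer and $R,XR,\ldots,X^{k-n}R$ from a uniform-exponent realizer, and then invoking Minkowski's second theorem. As you yourself observe, these span at most $2(k-n+1)$ dimensions, which is $\geq k$ only when $k\leq 2n-1$; but Theorem~\ref{t2} is stated for \emph{all} $k\geq n\geq 2$. Your suggested fallbacks (more intermediate scales, auxiliary Mahler duality in dimension $n+1$) are not worked out and it is not clear they yield the exact algebraic expression in \eqref{eq:steuern}. Moreover, shifting two independent polynomials $P$ and $R$ produces a linearly independent set of size $2(k-n+1)$ only if one of them is irreducible of degree exactly $n$ (otherwise a relation $PU=RV$ may collapse the rank); the paper achieves that only under the additional hypothesis \eqref{eq:assu} and only for $k\leq 2n-2$, which is precisely Theorem~\ref{th3}, the \emph{conditional} refinement, not Theorem~\ref{t2}.

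The paper's actual proof of Theorem~\ref{t2} uses a leaner construction that sidesteps both problems. It works in the dual (linear-form) picture in dimension $n$ at a single parameter $q$: it bounds $\psi_{n,1}^{\ast}(Q)$ via $w_n$ and then bounds $\psi_{n,2}^{\ast}(Q)$ via $\widehat w_n$ by tracing the piecewise-linear graph of $L_{n,1}^{\ast}$ forward to its meeting point with $L_{n,2}^{\ast}$ and using the slope constraints $\{1,-1/n\}$. Passing to dimension $k$, it then only shifts $P_1$ to get $k-n+1$ polynomials and adjoins a \emph{single} copy of $P_2$, giving a set $\mathscr{R}$ of exactly $k-n+2$ linearly independent polynomials of degree $\leq k$ with no irreducibility requirement and no constraint $k\leq 2n-1$. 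Minkowski's second theorem, applied to the remaining $n-1$ dual minima, and a single application of Mahler duality at the end then produce \eqref{eq:steuern}. The two crucial ingredients you are missing are (i) the graph-tracing argument that encodes $\widehat w_n$ in a bound for $\psi_{n,2}^{\ast}$ at the \emph{same} parameter $q$, rather than introducing a separate intermediate scale $H$ that must then be optimized; and (ii) the realization that one does not need two full families of shifts — one family plus one extra vector, combined with the negativity of the sum of all dual minima, already gives the result uniformly in $k$.

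If you restrict yourself to $k\leq 2n-2$ and add the hypothesis $w_n(\xi)>w_{n-1}(\xi)$ (so that $P_1$ may be taken irreducible of degree $n$), your construction with two shifted families is essentially the proof of Theorem~\ref{th3} and gives the \emph{stronger} bound \eqref{eq:tarda}. But as a proof of Theorem~\ref{t2} in its stated generality, the proposal is incomplete.
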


%
%

The special case $k=n$ simplifies to an estimate of
Bugeaud and Laurent~\cite{bl2010}, i.e.
\begin{equation}  \label{eq:this}
\lambda_{k}(\xi)\geq \frac{(\widehat{w}_{k}(\xi)-1)w_{k}(\xi)}{((k-2)\widehat{w}_{k}(\xi)+1)w_{k}+(k-1)\widehat{w}_{k}(\xi)}, \qquad k\geq 2.
\end{equation}
See also~\cite{blau07},~\cite{laurent}, and also Schmidt and Summerer~\cite{ssch} for another proof of \eqref{eq:this}.
As in~\cite{bl2010},~\cite{laurent},~\cite{ssch}, then our proof applies to the more general setting of
$\mathbb{Q}$-linearly independent
$\{ 1,\xi_{1},\ldots,\xi_{k}\}$, so with respect to the exponents $\lambda_{k}(\underline{\xi}), w_{k}(\underline{\xi}),
\widehat{w}_{k}(\underline{\xi})$ from Section~\ref{introd}.

We compare \eqref{eq:steuern} with \eqref{eq:bbs2}. In contrast
to \eqref{eq:folgerunge0}, here we only improve on Badziahin, Bugeaud
in certain cases. As we explain below, it turns out this may 
happen in the cases
\begin{equation}  \label{eq:Z1}
\text{\underline{Case 1}:}\;\; n\geq 3, \quad w_{n}(\xi)=w_{n-1}(\xi), 
\qquad \text{\underline{Case 2}:}\;\; n\geq 3, \quad n+1\leq k\leq 2n-2.
\end{equation}

A short calculation shows that our new result is stronger 
than Theorem~\ref{bbyd} as soon as
\begin{equation} \label{eq:ason}
\widehat{w}_{n}(\xi) > \frac{nw_{n}(\xi)}{w_{n}(\xi)-k+n}.
\end{equation}
%
We elaborate on how restrictive this estimate is.
First we notice that \eqref{eq:ason} enables the trivial condition $\widehat{w}_{n}(\xi)\leq w_{n}(\xi)$
as soon as $w_{n}(\xi)>k$ (a slightly more restrictive bound was obtained in~\cite{mamo}), which we impose anyway for a non-trivial estimate. 
Assume $\xi$ satisfies
\begin{equation} \label{eq:assu}
w_{n}(\xi)>w_{n-1}(\xi).
\end{equation}
Then another restriction comes from the reverse estimate of the form
\begin{equation}  \label{eq:bush}
\widehat{w}_{n}(\xi) \leq \frac{nw_{n}(\xi)}{w_{n}(\xi)-n+1}
\end{equation}
of~\cite[Theorem~2.2]{buschlei}.
Hence, according to \eqref{eq:ason}
in this case Theorem~\ref{t2} may improve on \eqref{eq:bbs2}
only if $k<2n-1$, while it at best confirms the same bound if $k=2n-1$.
It can be shown that for $n=2$ and $\widehat{w}_{2}(\xi)>2$
we have \eqref{eq:assu} automatically satisfied, see Proposition~\ref{propper} below. Combination leaves the cases \eqref{eq:Z1} open
for potential improvement. Since the existence of $\xi$ with $\widehat{w}_{n}(\xi)>n$ 
for any $n>2$ is at present unproved, we cannot yet provide numbers for which Theorem~\ref{t2} improves Theorem~\ref{bbyd}.
 
Let now $n=2$ and $\xi$ be a Sturmian continued fraction.
This setup induces equality in \eqref{eq:bush} as can be seen 
from the main result of~\cite{buglau}. Then for $k=3$
we once more obtain the correct value of $\lambda_{3}(\xi)$
from~\cite{ichjnt} 
as a lower bound, so
Theorem~\ref{t2} is sharp in certain cases as well. For $k>3$
the bound \eqref{eq:bbs2}
is stronger than \eqref{eq:steuern}. 

In the case $w_{n}(\xi)=\infty$,
Theorem~\ref{t2} yields
$\lambda_{k}(\xi)\geq (n-1)^{-1}$, confirming
a partial claim of~\cite[Theorem~2.1]{ichann} stating
that if $w_{n-1}(\xi)<\infty$, i.e. $\xi$
is a $U_{n}$-number in Mahler's classification, then
$\lambda_{k}(\xi)= (n-1)^{-1}$
for large
enough $k$. On the other hand, in case of $\widehat{w}_{n}(\xi)>n$ the bound
\eqref{eq:steuern} will exceed $1/(n-1)$ for every $k$.
This leads to a new proof that $U_{n}$-numbers satisfy $\widehat{w}_{n}(\xi)=n$,
already obtained in~\cite[Corollary~2.5]{buschlei}.
Adamczewski, Bugeaud~\cite{adambug} showed the related claim that $\widehat{w}_{n}(\xi)> n$
for some $n\geq 1$ implies $\xi$ is no $U_{k}$-number with $k>n$
(nor $k=n$ as pointed out above), see also Roy~\cite{roy5} when $n=2$.
We finally remark
that $\widehat{w}_{k}(\xi)\leq k+n-1$ holds for any $U_{n}$-number
$\xi$ and every $k\geq 1$ by~\cite[Corollary~2.5]{buschlei}, 
for $n\leq k\leq 2n-2$ see
also the bound from~\cite[Corollary~2.3]{ichfunc}.

In some cases,
we can strengthen our estimates in Theorem~\ref{t2}
upon the additional assumption
\eqref{eq:assu} on $\xi$.

\begin{theorem}  \label{th3}
	Let $k,n$ be integers with $2\leq n\leq k\leq 2n-2$.
	Assume $\xi$ is a real
	number and satisfies the inequality
	\eqref{eq:assu}.
	Then
	\begin{equation}  \label{eq:tarda}
	\lambda_{k}(\xi)\geq \frac{w_{n}(\xi)\widehat{w}_{n}(\xi)+
		(n-k-1)w_{n}(\xi)+(n-k)\widehat{w}_{n}(\xi)}{(2n-k-2)w_{n}(\xi)\widehat{w}_{n}(\xi)+(k-n+1)w_{n}(\xi)
		+(n-1)\widehat{w}_{n}(\xi)}.
	\end{equation}
\end{theorem}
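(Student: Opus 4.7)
The plan is to work within the parametric geometry of numbers framework of Schmidt and Summerer, combining Minkowski's Second Convex Body Theorem with Mahler's duality on polar convex bodies, in the spirit of the proof of Theorem~\ref{t2}. The main new idea is that the extra hypothesis \eqref{eq:assu} allows a single good degree-$n$ polynomial to be promoted into a family of $k-n+1$ linearly independent lattice vectors in the higher-dimensional problem, and this is what produces the strengthening \eqref{eq:tarda} over Theorem~\ref{t2}.

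Concretely, I would consider for each large parameter $Q$ the two families of convex bodies attached to the Veronese points $(1,\xi,\ldots,\xi^{n})\in\mathbb{R}^{n+1}$ and $(1,\xi,\ldots,\xi^{k})\in\mathbb{R}^{k+1}$ and denote their successive minima by $\mu_{i}^{(n)}(Q)$ and $\mu_{j}^{(k)}(Q)$. The exponents $w_{n}(\xi)$ and $\widehat{w}_{n}(\xi)$ govern the liminf and limsup of $-\log\mu_{1}^{(n)}(Q)/\log Q$, while by Mahler's duality applied to the $(k+1)$-dimensional body, the exponent $\lambda_{k}(\xi)$ is encoded in the asymptotics of the largest minimum $\mu_{k+1}^{(k)}(Q)$ of the polar body.

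The crucial use of \eqref{eq:assu} is that whenever $P\in\mathbb{Z}[X]$ of height at most $Q$ realizes $\mu_{1}^{(n)}(Q)$ near the liminf witnessing $w_{n}(\xi)$, the polynomial $P$ must have degree exactly $n$; otherwise the same $P$ would witness $w_{n-1}(\xi)\ge w_{n}(\xi)$, contradicting \eqref{eq:assu}. The shifts $P,XP,X^{2}P,\ldots,X^{k-n}P$ then all have degree at most $k$, remain linearly independent over $\mathbb{Q}$, satisfy $|\xi^{i}P(\xi)|=|\xi|^{i}|P(\xi)|$, and have heights differing from that of $P$ only by a bounded factor. Reading each shift as a vector in the lattice underlying the $(k+1)$-dimensional body, one obtains $k-n+1$ linearly independent integer vectors whose norm in that body is controlled, up to constants, by $\mu_{1}^{(n)}(Q)$; this yields an upper bound on $\mu_{k-n+1}^{(k)}(Q)$ in terms of $\mu_{1}^{(n)}(Q)$.

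I would then feed this upper bound on $\mu_{k-n+1}^{(k)}$ into Minkowski's Second Theorem applied to the $(k+1)$-dimensional body, together with the natural comparisons $\mu_{j}^{(k)}(Q)\le \mu_{j}^{(n)}(Q)$ coming from the inclusion of lattices and the uniform estimate from $\widehat{w}_{n}(\xi)$ on the very first minimum, to extract an upper bound on $\mu_{k+1}^{(k)}(Q)$; Mahler's duality then converts this into the sought lower bound \eqref{eq:tarda} for $\lambda_{k}(\xi)$. The hypothesis $k\le 2n-2$ is precisely what ensures $k-n+1\le n-1$, so that sufficiently many minima remain free to turn the extra shift information into a genuine improvement over Theorem~\ref{t2}. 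The main obstacle will be the bookkeeping: matching the optimal parametric scale $Q$ where both the $w_{n}(\xi)$-best vector in dimension $n$ and the ensuing configuration of minima in dimension $k$ are simultaneously favourable, and carrying the estimates through Mahler's duality so that the interplay of $w_{n}(\xi)$, $\widehat{w}_{n}(\xi)$, $n$, and $k$ crystallises into exactly the rational function appearing in \eqref{eq:tarda}.
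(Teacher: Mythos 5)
Your high-level framing (parametric geometry of numbers, Minkowski's Second Theorem, Mahler duality, shifting polynomials and transferring to dimension $k$ via a rescaling of the parameter) matches the paper, and you correctly observe that \eqref{eq:assu} forces the best degree-$\le n$ polynomial $P$ to have degree exactly $n$. However, the use you make of this is not where the improvement over Theorem~\ref{t2} actually comes from: promoting a single dimension-$n$ minimizer $P_1$ into $k-n+1$ linearly independent dimension-$k$ vectors via shifts is already done in the proof of Theorem~\ref{t2}, \emph{without} \eqref{eq:assu}, simply by taking $P_1, T^{n-d+1}P_1,\ldots,T^{k-d}P_1$ where $d=\deg P_1$ (each new shift has strictly larger degree than everything before it). So your argument, as stated, would at best reproduce the bound of Theorem~\ref{t2}, not the stronger \eqref{eq:tarda}.

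The genuine new ingredient you are missing is two-fold. First, Proposition~\ref{propper} (via Wirsing's Hilfssatz~4 and \eqref{eq:assu}) yields that $P_1$ can be taken \emph{irreducible} of degree exactly $n$, not merely of degree $n$; degree alone would not suffice. Second — and this is the heart of the improvement — this irreducibility forces $P_1$ to be coprime to the second-minimum polynomial $P_2$, which in turn lets one shift $P_2$ as well. The set $\tilde{\mathscr{R}}=\{P_1,TP_1,\ldots,T^{k-n}P_1,\,P_2,TP_2,\ldots,T^{k-n}P_2\}$ of $2(k-n+1)$ polynomials is then linearly independent: a nontrivial relation gives $P_1U=P_2V$ with $\deg V\le k-n\le n-2<\deg P_1$, so $P_1\nmid V$, while $P_1\nmid P_2$ since $P_1$ is irreducible of degree $n$ and $P_2$ is not a scalar multiple of it. With $2(k-n+1)$ dual minima in dimension $k$ bounded, only $2n-1-k\ge 1$ remain for the Minkowski averaging, and that is what produces the numerator and denominator in \eqref{eq:tarda}; the hypothesis $k\le 2n-2$ is what keeps $2(k-n+1)\le k$, not merely $k-n+1\le n-1$ as you suggest. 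Finally, the ``natural comparisons $\mu_{j}^{(k)}\le\mu_{j}^{(n)}$'' you invoke are not directly available (the two problems live at different parametric scales, requiring the rescaling of Lemma~\ref{lemuren}), and in any case one cannot blindly add them to the shifted family without re-verifying joint linear independence — which is exactly what the coprimality argument handles.
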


\begin{remark}
	A variant for $k>2n-2$ turns out weaker
	than Theorem~\ref{bbyd}.
\end{remark}

If $k=n$ we again obtain formula \eqref{eq:this} as from Theorem~\ref{t2}, so then condition \eqref{eq:assu} is not required. Otherwise 
\eqref{eq:tarda} is stronger than \eqref{eq:steuern}.
Assumption \eqref{eq:assu} may not be required for the conclusion, 
however in our proof as in~\cite{buschlei} 
it guarantees some nice properties of the
integer polynomials realizing the exponent $w_{n}(\xi)$, 
see Proposition~\ref{propper} below.
Theorem~\ref{th3} improves on Theorem~\ref{bbyd} upon
the same condition \eqref{eq:ason}, thus we require Case 2 of \eqref{eq:Z1}
and still cannot settle existence of numbers $\xi$ where this happens.

Our last claim provides a lower bound for $\lambda_{k}(\xi)$ in terms
of $\widehat{w}_{n}(\xi)$ only, if $n\leq k\leq 2n-2$. We also
include a bound that arises as a hybrid with Theorem~\ref{th3}.

\begin{theorem}  \label{gleich}
	Let $k\geq n\geq 2$ be integers  
	and $\xi$ be a transcendental real number. Then
	\begin{equation}  \label{eq:jjj}
	\lambda_{k}(\xi) \geq \frac{ \widehat{w}_{n}(\xi)+2n-2k-1 }{ (2n-k-2)\widehat{w}_{n}(\xi)+k }, \qquad\quad \text{if}\;\; k\leq 2n-2.
	\end{equation}
	In fact we have the stronger bound
	\begin{equation}  \label{eq:AA1}
	\lambda_{k}(\xi) \geq \min \left\{ \Theta \; , \; \frac{\widehat{w}_{n}(\xi)+2n-2k- 2}{(2n-k-3) \widehat{w}_{n}(\xi)+k}  \right\}, \qquad\quad \text{if}\;\; k\leq 2n-3,
	\end{equation}
	where $\Theta$ denotes the bound in \eqref{eq:tarda}.
\end{theorem}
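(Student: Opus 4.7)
The plan is to prove Theorem~\ref{gleich} by reducing to Theorem~\ref{th3} via a split on whether the assumption \eqref{eq:assu} holds at dimension $n$, and then eliminating the free variable $w_n(\xi)$ in favor of $\widehat{w}_n(\xi)$ by a monotonicity argument.

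First suppose \eqref{eq:assu} holds at $n$. Writing $\widehat{w}=\widehat{w}_n(\xi)$ and viewing the right-hand side of \eqref{eq:tarda} as a M\"obius function of $w=w_n(\xi)$,
$$g(w)=\frac{(\widehat{w}+n-k-1)\,w+(n-k)\widehat{w}}{\bigl((2n-k-2)\widehat{w}+k-n+1\bigr)w+(n-1)\widehat{w}},$$
I compute its ``determinant'' $ad-bc$. Setting $u=k-n+1\in\{1,\ldots,n-1\}$ produces a striking collapse: writing $(n-k)(2n-k-2)=(1-u)(n-u-1)$ and $(n-k-1)(2n-k-1)=-u(n-u)\cdot\frac{u}{u}$, a direct but neat computation gives the identity
$$ad-bc\;=\;u(n-u)\,\widehat{w}\,(\widehat{w}-1),$$
which is non-negative for every $\widehat{w}\geq 1$ in the range $n\leq k\leq 2n-2$. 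Hence $g$ is non-decreasing on $[\widehat{w},\infty)$, so from $w_n(\xi)\geq\widehat{w}_n(\xi)$ we may replace $w$ by $\widehat{w}$ to obtain $\lambda_k(\xi)\geq g(\widehat{w})$, and a short simplification shows $g(\widehat{w})=(\widehat{w}+2n-2k-1)/((2n-k-2)\widehat{w}+k)$, which is precisely the right-hand side of \eqref{eq:jjj}. In this case we additionally have the stronger bound $\lambda_k(\xi)\geq g(w)=\Theta$, producing the first branch of the minimum in \eqref{eq:AA1}.

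When \eqref{eq:assu} fails, i.e.\ $w_n(\xi)=w_{n-1}(\xi)$, Theorem~\ref{th3} is not directly available, and the plan is to descend: let $m\leq n$ be minimal with $w_m(\xi)=w_n(\xi)$, so that \eqref{eq:assu} does hold at dimension $m$, and apply Theorem~\ref{th3} at $m$ (or Theorem~\ref{t2} if $k>2m-2$). The ensuing bound is expressed via $w_m(\xi)=w_n(\xi)$ and $\widehat{w}_m(\xi)\leq\widehat{w}_n(\xi)$, and must be converted into an estimate in $\widehat{w}_n(\xi)$ alone. The loss incurred when substituting $\widehat{w}_n$ for $\widehat{w}_m$ should be absorbed using the reverse inequality \eqref{eq:bush} from~\cite{buschlei} controlling $w_m$ from below in terms of $\widehat{w}_m$, together with monotonicity of $\widehat{w}_j$ in $j$. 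I expect this descent step to be the main obstacle: pinning down the resulting expression so that it matches the secondary term in \eqref{eq:AA1} for $k\leq 2n-3$, and still yields the weaker bound \eqref{eq:jjj} in the boundary range $k\in\{2n-3,2n-2\}$, will likely require a direct parametric-geometry argument in the framework of Section~\ref{paramet}, invoking Minkowski's Second Theorem and Mahler's duality applied to a convex body in dimension $k+1$ interpolating between the linear-form and simultaneous setups.
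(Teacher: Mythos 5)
Your Case 1 argument is correct and quite elegant: the determinant computation $ad-bc=u(n-u)\widehat{w}(\widehat{w}-1)$ with $u=k-n+1$ checks out, so $g$ is non-decreasing on $[\widehat{w},\infty)$, and $g(\widehat{w})$ simplifies to the right-hand side of \eqref{eq:jjj}. So when \eqref{eq:assu} holds, you get \eqref{eq:jjj} as a clean corollary of Theorem~\ref{th3}, which is a genuinely different and slicker route than the paper's direct argument in that case.

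However, the complementary case is not done, and this is where the real content of the theorem lies. Your descent to minimal $m$ with $w_m(\xi)=w_n(\xi)$ runs into several concrete obstructions that you acknowledge but do not resolve. If $m$ is small enough that $k>2m-2$, Theorem~\ref{th3} is unavailable and Theorem~\ref{t2} gives a different and weaker Möbius form; you then face replacing $\widehat{w}_m(\xi)\le\widehat{w}_n(\xi)$ inside that expression, which goes in the wrong direction for the lower bound you want, and there is no clean inequality in the paper that lets you trade $\widehat{w}_m$ for $\widehat{w}_n$ without losing too much. Your own text concedes that "pinning down the resulting expression ... will likely require a direct parametric-geometry argument," which is precisely what the paper does, and without it your proposal does not establish \eqref{eq:jjj} in general, nor the second branch of \eqref{eq:AA1}.

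The key idea you are missing is in Section~\ref{glsec}: the paper never splits on \eqref{eq:assu}. Instead it uses Gelfond's Lemma \eqref{eq:gelfond} to choose $M=(c(n)/2)H(P_1)$ and produce a second polynomial $P_2$ of height at most $M$ with $|P_2(\xi)|\le M^{-\widehat w_n(\xi)+\epsilon}$, which is automatically not a multiple of the irreducible $P_1$. This yields a coprime pair $P_1,P_2$ with comparable heights and evaluations of size $M^{-\widehat w_n+\epsilon}$ regardless of whether $\deg P_1=n$. The set $\mathscr{R}=\{T^iP_1\}_{0\le i\le k-d}\cup\{T^jP_2\}_{0\le j\le\min\{d-1,k-n\}}$ then has at least $2(k-n+1)$ elements whenever $k\le 2n-2$, and feeding this into \eqref{eq:thatp}, \eqref{eq:jaja}, \eqref{eq:umrechnen} gives \eqref{eq:jjj} unconditionally; the case split $d=n$ versus $d\le n-1$ (giving $h\ge 2k-2n+3$) then produces both branches of the minimum in \eqref{eq:AA1}. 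Without this construction, your proof only covers the case where \eqref{eq:assu} already holds.
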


\begin{remark}  \label{reh4}
	The proof method of Theorem~\ref{gleich}
	also provides a new proof of
	\begin{equation} \label{eq:njor}
	\widehat{w}_{n}(\xi)\leq 2n-1, \qquad\qquad n\geq 1,
	\end{equation}
	due to Davenport and Schmidt~\cite{davsh}, 
	corresponding to the case $k=2n-1$.
	See~\cite{buschlei},~\cite{ichacta} for slightly stronger bounds,
	and~\cite{davsh},~\cite{roy1} for $n=2$.
	Unfortunately, combining \eqref{eq:jjj} with German's estimates \eqref{eq:ogerman} below
	turns out not to give an interesting relation between $\lambda_{k}$ and
	$\widehat{\lambda}_{n}$ in view of \eqref{eq:rlsc}.
\end{remark}

The bound \eqref{eq:jjj} is non-trivial, i.e. gives $\lambda_{k}(\xi)>1/k$,
as soon as $\widehat{w}_{n}(\xi)>k$. So we restrict to this case
in the sequel (which requires $n\geq 3$ if $k>n$).
The bound \eqref{eq:jjj} is smaller than both expressions in \eqref{eq:AA1} if $w_{n}(\xi)>\widehat{w}_{n}(\xi)$,
in particular weaker than the conditional bound \eqref{eq:tarda}.
We compare \eqref{eq:jjj} with the unconditional Theorem~\ref{bbyd} and Theorem~\ref{t2}.
A short computation shows that
it improves Theorem~\ref{bbyd} if
\begin{equation}  \label{eq:rhside}
w_{n}(\xi) < \frac{(2n-k)\widehat{w}_{n}(\xi) - k}{ 2n - 1 - \widehat{w}_{n}(\xi)}.
\end{equation}
For $\widehat{w}_{n}(\xi)=k$ the right hand side
gives $k$ as well, and
it exceeds $\widehat{w}_{n}(\xi)$ if  $\widehat{w}_{n}(\xi)>k$.
Marnat, Moshchevitin~\cite{mamo} generalized \eqref{eq:jhr}
by improving the trivial estimate $w_{n}(\xi)\geq \widehat{w}_{n}(\xi)$
for $n\geq 2$, which
plays against \eqref{eq:rhside}.
Nevertheless, \eqref{eq:jjj} is potentially of interest in many cases.
Assume $1<\alpha<\beta<2$ are fixed,
and for large $n$ choose
$k=\alpha n+o(n)$ and $\widehat{w}_{n}(\xi)=\beta n+o(n)$. 
It can be checked that then~\cite{mamo} only gives a lower bound $w_{n}(\xi)/\widehat{w}_{n}(\xi)\geq 1+o(1)$ as $n\to\infty$.
When neglecting lower order terms, we see that
for $w_{n}(\xi)/\widehat{w}_{n}(\xi)<(2-\alpha)/(2-\beta)+o(1)$ as $n\to\infty$, property \eqref{eq:rhside}
will be satisfied. This leaves a non-empty interval $(1+o(1),(2-\alpha)/(2-\beta)+o(1))$ for the ratio, for large $n$.
See also Example~\ref{ex} below.

Inequality~\eqref{eq:jjj} improves Theorem~\ref{t2}
as soon as 
\begin{equation}  \label{eq:bdbd}
\frac{w_{n}(\xi)}{\widehat{w}_{n}(\xi)} < \frac{ (3kn-k^2-k-2n^2+2n-1)\widehat{w}_{n}(\xi) + kn-k^2+k-2n+1}{ (n-k)\widehat{w}_{n}(\xi)^{2} + (3n-2k-1-2n^2+2kn)\widehat{w}_{n}(\xi) + k+1-2n   }.
\end{equation}
Despite~\cite{mamo} recalled above,
the scenario that \eqref{eq:bdbd} holds for many $\xi$ and
$k,n$ is likely. With $\alpha, \beta$ as above,
for  $w_{n}(\xi)/\widehat{w}_{n}(\xi)<(3\alpha-\alpha^{2}-2)/(\beta-\alpha\beta+2\alpha-2)+o(1)$ as $n\to\infty$, we satisfy \eqref{eq:bdbd}.
We want to mention that \eqref{eq:bdbd} requires 
$k>n$, indeed for $k=n$ the bound 
in \eqref{eq:jjj} becomes \eqref{eq:this} if 
$w_{n}(\xi)=\widehat{w}_{n}(\xi)$ and is weaker otherwise.
The next example illustrates a hypothetical 
scenario where Theorem~\ref{gleich} is 
reasonably strong.

\begin{example}  \label{ex}
	Let $n=10, k=13$ and assume $\xi$ is a real number satisfying $\widehat{w}_{10}(\xi)=14$. Then~\cite{mamo} gives $w_{10}(\xi) \geq 15.0190\ldots$. 
	The right hand sides of \eqref{eq:rhside} and \eqref{eq:bdbd} 
	become $17$ and
	$16.1875\ldots$, respectively. Hence $\lambda_{13}(\xi)\geq 7/83=0.0843\ldots$ from Theorem~\ref{gleich} improves
	on both Theorem~\ref{bbyd} and Theorem~\ref{t2} if
	$w_{10}(\xi)\in (15.0190,16.1875)$.
\end{example}

For $k=2n-2$, the bound \eqref{eq:jjj} becomes an easy affine function
	\[
	\lambda_{2n-2}(\xi) \geq \frac{ \widehat{w}_{n}(\xi)-2n+3 }{ 2n-2}.
	\]
	This may be of interest for $3\leq n\leq 9$. If $n\geq 10$ then $\widehat{w}_{n}(\xi)\leq 2n-2$ for any $\xi$ was established in~\cite{ichacta}
(see also~\cite{buschlei}), and the bound becomes trivial. For $n=2$ the implied bound $\lambda_{2}(\xi) \geq 
(\widehat{w}_{2}(\xi)-1)/2$ is weaker than $\lambda_{2}(\xi) \geq 
\widehat{w}_{2}(\xi)-2+\widehat{w}_{2}(\xi)^{-1}$ derived from Jarn\'ik's identity \eqref{eq:jhr} and \eqref{eq:mam}. We close this section with
an asymptotic result.

\begin{corollary}
	Let $\xi$ be a real transcendental number and write
	\[
	\overline{\widehat{w}}(\xi)= \limsup_{n\to\infty} \frac{ \widehat{w}_{n}(\xi) }{n}, \qquad\qquad \overline{\lambda}(\xi)= \limsup_{n\to\infty} n\lambda_{n}(\xi).	
	\]
	Then
	\[
	\overline{\lambda}(\xi) \geq   \frac{ \left(2-\sqrt{2 - \overline{\widehat{w}}(\xi) }\right) \cdot \left(\overline{\widehat{w}}(\xi) + 2\sqrt{2 - \overline{\widehat{w}}(\xi)}- 2\right)}{\overline{\widehat{w}}(\xi)\sqrt{2 - \overline{\widehat{w}}(\xi)}}.
	\]
	The according estimate with respect to the lower limits holds as well.
\end{corollary}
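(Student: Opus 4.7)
The plan is to apply Theorem~\ref{gleich}, specifically the bound \eqref{eq:jjj}, along a subsequence of indices with $k$ chosen as approximately $\alpha n$, and then to optimize over the parameter $\alpha \in [1,2)$.

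Set $\beta := \overline{\widehat{w}}(\xi)$. By \eqref{eq:njor} and Dirichlet's theorem, $\beta \in [1, 2]$. By the definition of $\limsup$, there is a sequence $n_j \to \infty$ with $\widehat{w}_{n_j}(\xi)/n_j \to \beta$. Fix $\alpha \in [1, 1 + \beta/2)$ and set $k_j := \lfloor \alpha n_j \rfloor$, so that $n_j \leq k_j \leq 2n_j - 2$ for all large $j$ and \eqref{eq:jjj} applies. Multiplying through by $k_j$ and dividing numerator and denominator by $n_j^{2}$, a direct passage to the limit yields
\[
\overline{\lambda}(\xi) \;\geq\; \limsup_{j\to\infty} k_j \lambda_{k_j}(\xi) \;\geq\; g(\alpha) \;:=\; \frac{\alpha(\beta + 2 - 2\alpha)}{(2-\alpha)\beta}.
\]

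The second step is to maximize $g(\alpha)$ over the admissible range. Solving $g'(\alpha) = 0$ reduces to the quadratic $\alpha^{2} - 4\alpha + 2 + \beta = 0$, whose relevant root is $\alpha_{*} = 2 - \sqrt{2 - \beta}$; for $\beta \in [1, 2)$ one checks $\alpha_{*} \in [1, 1 + \beta/2)$. Writing $s := \sqrt{2 - \beta}$, so that $\beta + 2s - 2 = s(2-s)$, a short calculation gives
\[
g(\alpha_{*}) \;=\; \frac{(2-s)\,s(2-s)}{s\beta} \;=\; \frac{(2-s)^{2}}{\beta} \;=\; \frac{\bigl(2 - \sqrt{2-\beta}\bigr)\bigl(\beta + 2\sqrt{2-\beta} - 2\bigr)}{\beta \sqrt{2-\beta}},
\]
which is the claimed bound. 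The limit case $\beta = 2$ (where $\alpha_{*} = 2$ would violate $k \leq 2n - 2$) is handled by taking $\alpha = 2 - \delta$ with $\delta \to 0^{+}$ and invoking continuity of $g$.

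For the corresponding lower-limit estimate, put $\beta := \underline{\widehat{w}}(\xi)$, so that $\widehat{w}_{n}(\xi) \geq n(\beta - \varepsilon)$ for all sufficiently large $n$. For each large $k$, set $n := \lceil k/\alpha \rceil$ with $\alpha \in [1, 1 + \beta/2)$. A direct check shows that for $n \leq k \leq 2n - 2$, the right-hand side of \eqref{eq:jjj} is monotone non-decreasing in $\widehat{w}_{n}$: the numerator of the relevant partial derivative equals $k - (2n - k - 2)(2n - 2k - 1)$, and since $(2n - k - 2) \geq 0$ while $(2n - 2k - 1) \leq -1$, this is at least $k > 0$. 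Hence substituting $\widehat{w}_{n} \geq n(\beta - \varepsilon)$ only weakens the bound, and passing to the limit $k \to \infty$ followed by $\varepsilon \to 0$ and the same optimization over $\alpha$ yields the identical formula with $\underline{\widehat{w}}(\xi)$ in place of $\overline{\widehat{w}}(\xi)$. The main technical subtlety lies in verifying this monotonicity and keeping the floor/ceiling bookkeeping together with the boundary case $\beta = 2$ under control; the rest of the argument is a routine calculus exercise.
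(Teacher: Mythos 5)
Your proof is correct and follows essentially the same approach as the paper's (very terse) proof, which simply prescribes the choice $k=\lfloor(2-\sqrt{2-\widehat{w}_{n}(\xi)/n})\cdot n\rfloor$ in \eqref{eq:jjj} and invokes the asymptotics; you fix the ratio $k/n\to\alpha$ first, obtain $g(\alpha)=\alpha(\beta+2-2\alpha)/((2-\alpha)\beta)$, and then optimize to find $\alpha_*=2-\sqrt{2-\beta}$, which reproduces exactly the same choice, and your monotonicity check for the liminf version correctly fills in the detail the paper leaves implicit.
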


The bound as a function of $\overline{\widehat{w}}(\xi)$
induces an increasing bijection of the interval $[1,2]$ onto itself, upon taking the left-sided limit if $\overline{\widehat{w}}(\xi)=2$.
It can be seen complementary to $\overline{\lambda}(\xi) \geq (\overline{w}(\xi)+1)^2/(4\overline{w}(\xi))$  from~\cite[Theorem~2.1]{ichann}, for  $\overline{w}(\xi)$ defined analogously  with respect to ordinary exponents. The latter estimate can be derived from Theorem~\ref{bbyd}.

\begin{proof}
	Choose $k=\lfloor(2-\sqrt{2-\widehat{w}_{n}(\xi)/n })\cdot n\rfloor$ in \eqref{eq:jjj}
	and look at the dominant terms as $n\to\infty$, we skip details.
	\end{proof}

\subsection{Metrical considerations} \label{metrisch}
    As a small metrical application of our results,  we discuss the problem of estimating the Hausdorff dimensions of
    \[
    	\{ \xi:  \widehat{w}_{n}(\xi) \geq \widehat{w} \}, \qquad\qquad 
    	\widehat{w}\in [n,2n-1).
    \] 
     For simplicity we deal with a normalized problem 
     and consider $n\to\infty$. A well-known metric result of Bernik~\cite{bernik} immediately yields the 
    trivial bound
    \begin{equation}  \label{eq:currkn2}
    \dim \left\{ \xi:  \frac{\widehat{w}_{n}(\xi)}{n} \geq \beta \right\}
    \leq \dim \left\{ \xi:  \frac{w_{n}(\xi)}{n} \geq \beta \right\}
    \leq \frac{1}{\beta}+o(1), \qquad \beta\in [1,2],\;\; n\to\infty.
    \end{equation}
    The estimates from~\cite{mamo} also do not improve this 
    asymptotic relation.
    While \eqref{eq:currkn2} seems a very crude estimate,  
    nothing better seems currently available.
  
	Upon suitable choice of $k$, the inclusion
	\begin{equation}  \label{eq:currkn}
	\{ \xi:  \widehat{w}_{n}(\xi) \geq \widehat{w} \}
	\subseteq \left\{ \xi:  \lambda_{k}(\xi) \geq \frac{ \widehat{w}+2n-2k-1 }{ (2n-k-2)\widehat{w}+k } \right\}, \qquad \widehat{w}>n,
	\end{equation}
	induced by \eqref{eq:jjj} may have potential to improve \eqref{eq:currkn2}, at least in certain parameter ranges for $\beta$.
    Unfortunately, no reasonable upper bounds for 
	the dimensions of level sets $\{ \xi:  \lambda_{k}(\xi) \geq \lambda\}$ for $\lambda\in [1/k,2/k]$ are yet available
	that we would require for this avenue. However, we want
	to give in to some speculation. Assume Beresnevich's~\cite{bere} lower
	bound 
	\begin{equation} \label{eq:currn}
	\dim \{ \xi\in \mathbb{R}: \lambda_{k}(\xi)\geq \lambda\} 
	\geq	\frac{k+ 1}{\lambda+ 1} -(k-1), \qquad\qquad \lambda\in \left[\frac{1}{k}, \frac{3}{2k-1}\right],
	\end{equation}
	is an identity (as conjectured by him and proved for $k=2$) and the reverse estimate
	extends to $\lambda\in [1/k,c/k]$ for some $c$ close to $2$. 
	Then choosing $k=\lfloor(2-\sqrt{2-\beta})n\rfloor$ in order to
	maximize the expression $k\lambda_{k}(\xi)$, 
	indeed it turns out via \eqref{eq:currkn} we improve \eqref{eq:currkn2} for $\beta\in (\frac{17}{9},2-\epsilon)$ with small $\epsilon$,
	for $n$ large enough. We believe our assumption is reasonable, in particular it agrees with the lower bound
	\begin{equation}  \label{eq:bbschronk}
	\dim \{ \xi\in \mathbb{R}: \lambda_{k}(\xi)\}\geq \max_{1\leq N\leq k} \left\{  \frac{(N+1)(1-(N-1)\lambda)}{(k-N+1)(1+\lambda)} \right\}, 
	\qquad\qquad \lambda\geq \frac{1}{n},
	\end{equation}
	from~\cite[Theorem~2.3]{badbug} (also
	obtained in~\cite{ichann}) is $\epsilon$ is small enough. 
	If \eqref{eq:bbschronk} is a good approximation to the true value, 
	we can even extend the above interval for $\beta$, in case of 
	a hypothetical equality 
	in \eqref{eq:bbschronk} (that however contradicts \eqref{eq:currn}
	for $\lambda\leq 3/(2k-1)\approx (3/2)k^{-1}$)
	a calculation verifies
	we get a stronger bound for every $\beta\in [1,2]$. 
	Roughly speaking, Theorem~\ref{gleich} shows that
	not both \eqref{eq:currkn2} and \eqref{eq:bbschronk}
	can be sharp.

\section{The $\mathbb{Q}$-linearly independent case} \label{qli}

For sake of completeness,
we want to formulate similar going-up principles 
for the case of $\mathbb{Q}$-linearly independent real 
vectors. In this situation
we consider extensions of a given
real vector, or equivalently projections of infinite vectors
$\underline{\xi}\in\mathbb{R}^{\mathbb{N}}$
to its first $N$ coordinates, and compare the exponents of approximation
as $N$ increases (note that this is a very different setup
than the going-up principles for fixed $N$ that relate the so-called
intermediate exponents, as for instance in~\cite{blau07}). 
If $\underline{\xi}=(\xi,\xi^2,\xi^3,\ldots)$
we are in the situation of Sections~\ref{se2} and~\ref{mix}. In the
general setting, all results will be considerably weaker,
as may be expected, and the proofs are considerably shorter and easier 
when directly applying well-known transference inequalities. 
The hidden work
in proving these preliminaries appears to some extent in our proofs for
results of Sections~\ref{se2},~\ref{mix}, we elaborate a little 
more on this issue in Section~\ref{pqli}.   
Our first result resembles \eqref{eq:folgerunge0}.

\begin{theorem}  \label{allgemein}
	Let $k\geq n\geq 1$ be integers and
	$\underline{\xi}=(\xi_{1},\xi_{2},\ldots)$ be an
	infinite vector of real numbers.
	For $N\geq 1$,
	denote by $\underline{\xi}_{N}=(\xi_{1},\ldots,\xi_{N})$ the projection of $\underline{\xi}$
	to the first $N$ entries. 
	Assume that $\{1,\xi_{1},\ldots,\xi_{k}\}$ 
	is $\mathbb{Q}$-linearly independent. Then
	\begin{equation}  \label{eq:bo1}
	\lambda_{k}(\underline{\xi}_{k})\geq
	\frac{(n-1)\lambda_{n}(\underline{\xi}_{n})+\widehat{\lambda}_{n}(\underline{\xi}_{n})+n-2 }{ (k-1)(n-1)\lambda_{n}(\underline{\xi}_{n})-\widehat{\lambda}_{n}(\underline{\xi}_{n})+kn-n-k+2 }.
	\end{equation}
	Moreover,
\begin{equation}  \label{eq:bo2}
\lambda_{k}(\underline{\xi}_{k})  \geq \frac{(A-1)B}{((N-2)A+1)B+(N-1)A}
\end{equation}
with
\[
A= \frac{(n-1)(k-1)^2}{nk-k-2n+3-\widehat{\lambda}_{n}(\underline{\xi}_{n})},
\quad B= \frac{(n-1)\lambda_{n}(\underline{\xi}_{n})+\widehat{\lambda}_{n}(\underline{\xi}_{n})+n-2}{1-\widehat{\lambda}_{n}(\underline{\xi}_{n})}.
\]
\end{theorem}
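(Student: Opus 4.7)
I would prove both bounds by chaining three standard ingredients: an input transference on $\underline{\xi}_n$ that bounds $w_n(\underline{\xi}_n)$ from below in terms of $\lambda_n(\underline{\xi}_n)$ and $\widehat{\lambda}_n(\underline{\xi}_n)$; the trivial dimensional monotonicity $w_k(\underline{\xi}_k)\geq w_n(\underline{\xi}_n)$ and $\widehat{w}_k(\underline{\xi}_k)\geq \widehat{w}_n(\underline{\xi}_n)$, obtained by padding integer relations $(x_0,\ldots,x_n)\in\mathbb{Z}^{n+1}$ with zeros in coordinates $n+1,\ldots,k$; and an output transference returning from linear-form exponents on $\underline{\xi}_k$ to $\lambda_k(\underline{\xi}_k)$. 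The input step I have in mind is a Marnat--Moshchevitin/Bugeaud--Laurent-type refinement of Khintchine's transference yielding
\[
w_n(\underline{\xi}_n)\;\geq\; B \;:=\; \frac{(n-1)\lambda_n(\underline{\xi}_n)+\widehat{\lambda}_n(\underline{\xi}_n)+n-2}{1-\widehat{\lambda}_n(\underline{\xi}_n)},
\]
which via monotonicity upgrades to $w_k(\underline{\xi}_k)\geq B$.

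For \eqref{eq:bo1}, I would apply the classical Khintchine bound $\lambda_k(\underline{\xi}_k)\geq w_k(\underline{\xi}_k)/((k-1)w_k(\underline{\xi}_k)+k)$, valid for $\mathbb{Q}$-linearly independent vectors and manifestly increasing in $w_k(\underline{\xi}_k)$. Substituting $w_k(\underline{\xi}_k)\geq B$ and clearing the common denominator $1-\widehat{\lambda}_n(\underline{\xi}_n)$ in the expression $B/((k-1)B+k)$, one obtains after elementary simplification exactly the right hand side of \eqref{eq:bo1}.

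For \eqref{eq:bo2}, I would apply the refined Bugeaud--Laurent transference \eqref{eq:this} to $\underline{\xi}_k$, which holds in the $\mathbb{Q}$-linearly independent setting as noted after that formula. A short partial-derivative check verifies that for fixed $k\geq 2$ the map $(A,B)\mapsto (A-1)B/(((k-2)A+1)B+(k-1)A)$ is monotonically increasing in each of $A$ and $B$ whenever $A,B>1$. Substituting $w_k(\underline{\xi}_k)\geq B$ from Step~1, together with a lower bound $\widehat{w}_k(\underline{\xi}_k)\geq A$ derived from a German/Bugeaud--Laurent-type estimate on $\widehat{w}_n(\underline{\xi}_n)$ in terms of $\widehat{\lambda}_n(\underline{\xi}_n)$ and $\lambda_n(\underline{\xi}_n)$ (and then promoted to dimension $k$ via monotonicity), directly produces \eqref{eq:bo2}. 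The main obstacle is pinning down the precise inequality in~\cite{mamo},~\cite{bl2010} that yields the exact closed form $A=(n-1)(k-1)^2/(nk-k-2n+3-\widehat{\lambda}_n(\underline{\xi}_n))$; once identified, the rest of the argument reduces to routine algebra, consistent with the author's own remark that these $\mathbb{Q}$-linearly independent results admit significantly shorter proofs from standard transference inequalities.
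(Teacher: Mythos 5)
Your argument for \eqref{eq:bo1} is correct and is exactly the paper's proof: you start from the Bugeaud--Laurent transference $w_n(\underline{\xi}_n)\geq B$ (this is \eqref{eq:tur} in the paper, attributed to~\cite{bl2010}), pad with zeros to get $w_k(\underline{\xi}_k)\geq w_n(\underline{\xi}_n)\geq B$, and feed this into the left Khintchine inequality \eqref{eq:khintchine} in dimension $k$, which is increasing in $w_k$. That reproduces \eqref{eq:bo1} verbatim, and the monotonicity lemma via zero-padding is stated and used the same way in the paper.

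For \eqref{eq:bo2}, however, your proposed route to $\widehat{w}_k(\underline{\xi}_k)\geq A$ has a structural gap that cannot be repaired by just "pinning down the right inequality." You propose to establish a lower bound on $\widehat{w}_n(\underline{\xi}_n)$ (by some German/Bugeaud--Laurent-type transference in dimension $n$) and then promote it by monotonicity $\widehat{w}_k\geq\widehat{w}_n$. Any bound obtained this way is independent of $k$, but $A=\tfrac{(n-1)(k-1)^2}{nk-k-2n+3-\widehat{\lambda}_n(\underline{\xi}_n)}$ genuinely depends on $k$. In fact, the natural German bound in dimension $n$, namely $\widehat{w}_n\geq (n-1)/\bigl(1-\widehat{\lambda}_n(\underline{\xi}_n)\bigr)=:A'$, satisfies $A'\geq A$ only when $\widehat{\lambda}_n(\underline{\xi}_n)\geq (k-n+1)/k$; in the complementary (and nonempty) range one has $A'<A$, so your route would produce a strictly weaker lower bound for $\lambda_k$ after substitution into \eqref{eq:this}, and the claimed inequality would not follow. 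The paper's actual derivation of $A$ is a three-step chain entirely inside dimension $k$: first combine $\widehat{w}_k\geq\widehat{w}_n\geq A'$ with the left German inequality \eqref{eq:ogerman} in dimension $k$ to obtain the lower bound \eqref{eq:schwachmaten} on $\widehat{\lambda}_k(\underline{\xi}_k)$ (this is Theorem~\ref{schlechtsatz}), and then apply the right German inequality \eqref{eq:ogerman} in dimension $k$ to convert that lower bound on $\widehat{\lambda}_k$ back into a lower bound on $\widehat{w}_k$, yielding exactly $A$. This "round trip" through $\widehat{\lambda}_k$ is where the extra factor $(k-1)^2$ comes from, and it is an improvement over the direct route precisely when $\widehat{\lambda}_n(\underline{\xi}_n)<(k-n+1)/k$ (so, roughly, small $\widehat{w}_n$). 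Your final step — substituting $A$ and $B$ into \eqref{eq:this} with $N=k$ and using monotonicity of that expression in both arguments — is correct and is what the paper does, but the intermediate step needs the detour through $\widehat{\lambda}_k$. Incidentally, the bound $A$ also does not involve $\lambda_n(\underline{\xi}_n)$ at all, only $\widehat{\lambda}_n(\underline{\xi}_n)$, contrary to what your sketch suggests.
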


When expanded by inserting for $A,B$, the bound \eqref{eq:bo2} becomes 
a lengthy expression that we omit to 
state explicitly. It 
exceeds \eqref{eq:bo1} as soon as
$\widehat{\lambda}_{n}(\underline{\xi}_{n})>(k-n+1)/n$, which relies
on the fact that we use Theorem~\ref{schlechtsatz} below in the proof.
Since $\widehat{\lambda}_{n}(\underline{\xi}_{n})\geq 1/n$, as a corollary 
of \eqref{eq:bo1} we obtain a variant that resembles Theorem~\ref{juppy}.

\begin{theorem} \label{gutsatz}
	Upon the assumptions of Theorem~\ref{allgemein}, assume 
	\[
	\lambda_{n}(\underline{\xi}_{n})>\frac{k-n+1}{n}.
	\]
	Then 
	\begin{equation}  \label{eq:schwachmat}
	\lambda_{k}(\underline{\xi}_{k})\geq 
	\frac{n\lambda_{n}(\underline{\xi}_{n})+n-1}{n(k-1)(\lambda_{n}(\underline{\xi}_{n})+1)+1}>\frac{1}{k}.
	\end{equation}
\end{theorem}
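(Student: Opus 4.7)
The plan is to deduce this directly from inequality \eqref{eq:bo1} of Theorem~\ref{allgemein} by inserting the universal Dirichlet bound $\widehat{\lambda}_n(\underline{\xi}_n)\geq 1/n$. First I would verify that the right-hand side of \eqref{eq:bo1}, viewed as a function of $\widehat{\lambda}_n(\underline{\xi}_n)$ with $\lambda_n(\underline{\xi}_n)$ fixed, is monotonically increasing in $\widehat{\lambda}_n(\underline{\xi}_n)$. Writing the expression as $(A+t)/(B-t)$ with $t=\widehat{\lambda}_n(\underline{\xi}_n)$ and $A,B>0$ (in our parameter range), the derivative $(A+B)/(B-t)^2$ is positive, so replacing $\widehat{\lambda}_n(\underline{\xi}_n)$ by $1/n$ can only weaken \eqref{eq:bo1}.

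The second step is a routine simplification. Substituting $\widehat{\lambda}_n(\underline{\xi}_n)=1/n$ and using the identity $kn-n-k+2=(k-1)(n-1)+1$, both numerator and denominator of \eqref{eq:bo1} acquire a common factor $(n-1)/n$; the numerator collapses to $\frac{n-1}{n}(n\lambda_n(\underline{\xi}_n)+n-1)$ and the denominator to $\frac{n-1}{n}\bigl(n(k-1)(\lambda_n(\underline{\xi}_n)+1)+1\bigr)$, yielding exactly the first inequality in \eqref{eq:schwachmat}.

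For the strict inequality $\lambda_k(\underline{\xi}_k)>1/k$, I would clear denominators: the assertion
\[
\frac{n\lambda_n(\underline{\xi}_n)+n-1}{n(k-1)(\lambda_n(\underline{\xi}_n)+1)+1}>\frac{1}{k}
\]
is equivalent, after expanding, to $n\lambda_n(\underline{\xi}_n)>k-n+1$, which is precisely the hypothesis of the theorem.

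There is no real obstacle here; the content of the statement lies entirely in Theorem~\ref{allgemein}, and the present proof is a one-line corollary once the correct monotonicity is noted. The only thing that requires minor care is confirming the sign in the monotonicity step, and recognizing the factorization $(k-1)(n-1)+1$ in the denominator so that the common factor $(n-1)/n$ can be cancelled cleanly.
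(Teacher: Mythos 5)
Your proof is correct and matches the route the paper explicitly intends: the text immediately before Theorem~\ref{gutsatz} states that it follows "as a corollary of \eqref{eq:bo1}" upon using $\widehat{\lambda}_n(\underline{\xi}_n)\geq 1/n$, and Section~\ref{pqli} also notes an equivalent direct route via Khintchine's inequality \eqref{eq:khintchine}, both of which yield the same bound. Your monotonicity check is the right one to make (with $A+B=k(n-1)(\lambda_n+1)>0$ for $n\geq 2$, and the denominator $B-t$ remains positive after lowering $t$ to $1/n$); the only caveat you could flag is that the case $n=1$ is degenerate in \eqref{eq:bo1} since then $\widehat{\lambda}_1=1$ is forced and the expression becomes $0/0$, but that case is trivial anyway.
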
 

For $\lambda_{n}(\underline{\xi}_{n})=(k-n+1)/n$  the right
inequality of \eqref{eq:schwachmat} would become an identity. We believe 
Theorem~\ref{gutsatz} is optimal in the general setting. 
We briefly talk about metric consequences, even though
the metric theory with respect to 
the entire space is complete. 
It is known thanks to Jarn\'ik~\cite{jarnik} (see also Dodson~\cite{dodson}) that
for $\lambda\in[1/N,\infty]$ we have
\begin{equation} \label{eq:jarn}
\mathscr{D}_{N}(\lambda):= \dim \{ \underline{\xi}\in\mathbb{R}^{N}: \lambda_{N}(\underline{\xi})\geq \lambda\}=\frac{N+1}{\lambda+1}, \qquad\quad N\geq 1.
\end{equation} 
Theorem~\ref{gutsatz} and the property $\dim(A\times B)\geq \dim(A)+\dim(B)$
of the Hausdorff dimension for $A$ the set in \eqref{eq:jarn} 
with $N=n$ and
 $B=\mathbb{R}^{k-n}$ implies 
\begin{equation} \label{eq:chrom}
\mathscr{D}_{k}\Big(\frac{n\lambda+n-1}{n(k-1)(\lambda+1)+1}\Big) 
\geq \mathscr{D}_{n}(\lambda)+k-n,  \qquad \lambda\geq \frac{k-n(k-n)+1}{n}.
\end{equation}
%
Clearly, the estimate
\eqref{eq:chrom} can alternatively derived from \eqref{eq:jarn}.
We calculate
\[
\mathscr{D}_{k}\Big(\frac{n\lambda+n-1}{n(k-1)(\lambda+1)+1}\Big) 
-(\mathscr{D}_{n}(\lambda)+k-n)=\frac{(\lambda n-k+n-1)(kn-1)}{(1+\lambda)nk},
\]
the right hand side is non-negative as soon 
as $\lambda\geq (k-n+1)/n$. We derive that there is equality in \eqref{eq:chrom} precisely
for $\lambda=(k-n+1)/n$ to obtain
$\mathscr{D}_{k}(\frac{1}{k})= k$. Hence, for larger $\lambda$,
from a metrical point of view, 
the majority of vectors contributing to the left set of \eqref{eq:chrom}
is not coming from $\lambda$-approximable points
in a projection to $n$ coordinates.
We next establish corresponding going-up results concerning the uniform exponents. 

\begin{theorem}  \label{schlechtsatz}
	Keep the definitions and assumptions of Theorem~\ref{allgemein}. If we assume
	that 
	\begin{equation} \label{eq:kondi}
	\widehat{\lambda}_{n}(\underline{\xi}_{n})>\frac{k-n+1}{n},
	\end{equation}
	then 
	\begin{equation}  \label{eq:schwachmaten}
	\widehat{\lambda}_{k}(\underline{\xi}_{k})\geq 
	\frac{\widehat{\lambda}_{n}(\underline{\xi}_{n})+n-2}{(n-1)(k-1)}>\frac{1}{k}.
	\end{equation}
\end{theorem}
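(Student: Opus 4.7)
I will follow the paper's stated strategy for Section~\ref{qli}: compose known transference inequalities, avoiding the sophisticated parametric geometry machinery used for the Veronese-curve results. The chain I plan to use is
\[
\widehat{\lambda}_n(\underline{\xi}_n) \;\longrightarrow\; \widehat{w}_n(\underline{\xi}_n) \;\longrightarrow\; \widehat{w}_k(\underline{\xi}_k) \;\longrightarrow\; \widehat{\lambda}_k(\underline{\xi}_k),
\]
with transferences at the outer steps and a trivial dimensional extension in the middle.

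\textbf{Step 1:} Apply a Khintchine/German-type transference for uniform exponents on $\mathbb{Q}$-linearly independent vectors. This yields a lower bound for $\widehat{w}_n(\underline{\xi}_n)$ purely in terms of $\widehat{\lambda}_n(\underline{\xi}_n)$ and $n$, valid since $\widehat{\lambda}_n(\underline{\xi}_n) < 1$ by Dirichlet and Remark~\ref{reh}.

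\textbf{Step 2:} Any $(x_0, x_1, \ldots, x_n) \in \mathbb{Z}^{n+1}$ with $\max_{1 \leq j \leq n} |x_j| \leq X$ and $|x_0 + \xi_1 x_1 + \cdots + \xi_n x_n| \leq X^{-w}$ extends by zeros to $(x_0, x_1, \ldots, x_n, 0, \ldots, 0) \in \mathbb{Z}^{k+1}$, a solution of the corresponding problem for $\underline{\xi}_k$ with identical bounds. This gives the monotonicity $\widehat{w}_k(\underline{\xi}_k) \geq \widehat{w}_n(\underline{\xi}_n)$.

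\textbf{Step 3:} Apply the reverse transference to bound $\widehat{\lambda}_k(\underline{\xi}_k)$ from below as a function of $\widehat{w}_k(\underline{\xi}_k)$ and $k$; for $k=2$ this reduces to Jarnik's identity. Substituting the chain from Steps 1 and 2 and simplifying algebraically yields the claimed lower bound. The strict inequality $\widehat{\lambda}_k(\underline{\xi}_k) > 1/k$ follows from the hypothesis by a direct computation: multiplying out, $k(\widehat{\lambda}_n(\underline{\xi}_n) + n - 2) > (n-1)(k-1)$ is equivalent to $\widehat{\lambda}_n(\underline{\xi}_n) > (k-n+1)/k$, and the hypothesis $\widehat{\lambda}_n(\underline{\xi}_n) > (k-n+1)/n$ is strictly stronger since $n \leq k$.

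The main obstacle is selecting the correct pair of transference inequalities in Steps 1 and 3 whose composition matches precisely the form $(\widehat{\lambda}_n(\underline{\xi}_n) + n - 2)/((n-1)(k-1))$; the classical Khintchine transference tends to yield strictly weaker bounds, so the sharper uniform transferences from Schmidt-Summerer's parametric geometry of numbers, or an appropriate generalization of Jarnik's second theorem to $n \geq 3$, will likely be needed. Failing that, a direct Mahler-duality argument on a single convex body in $\mathbb{R}^{k+1}$, following the pattern of the proofs in Sections~\ref{6}--\ref{7}, is the natural fallback.
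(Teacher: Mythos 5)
Your proposed chain $\widehat{\lambda}_n\to\widehat{w}_n\to\widehat{w}_k\to\widehat{\lambda}_k$ is exactly the paper's proof: both the outer steps use German's uniform transference \eqref{eq:ogerman} (right inequality for $N=n$ to get $\widehat{w}_n\geq (n-1)/(1-\widehat{\lambda}_n)$, left inequality for $N=k$), and the composition does simplify cleanly to $(\widehat{\lambda}_n(\underline{\xi}_n)+n-2)/((n-1)(k-1))$, so no sharper Jarn\'ik-type generalization or Mahler-duality fallback is needed. The one small slip is invoking Remark~\ref{reh} (a Veronese-curve estimate) to justify $\widehat{\lambda}_n(\underline{\xi}_n)<1$; for general $\mathbb{Q}$-linearly independent vectors the correct statement is simply $\widehat{\lambda}_n\leq 1$, which suffices for the argument.
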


Theorem~\ref{schlechtsatz} is of no interest for Veronese curves
as condition \eqref{eq:kondi} contradicts \eqref{eq:rlsc} as 
soon as $k>n$.
The spectrum of $\widehat{\lambda}_{N}$ among
$\underline{\xi}\in\mathbb{R}^{N}$ that 
are  $\mathbb{Q}$-linearly independent with $\{ 1\}$
equals $[1/N,1]$, as follows for example from the constructions 
in~\cite[Theorem~2.5]{j1}, or alternatively Roy's deep existence
result~\cite{roy3}.
Consequently the condition \eqref{eq:kondi} can be satisfied for $2\leq n\leq k\leq 2n-2$.  
Metrical implications in the spirit of~\eqref{eq:chrom} 
between sets
\[
\widehat{\mathscr{D}}_{N}(\lambda):= \dim \{ \underline{\xi}\in\mathbb{R}^{N}: \widehat{\lambda}_{N}(\underline{\xi})\geq \lambda\}, \qquad\qquad \lambda\in[1/N,1],
\]
in various dimensions $N$ follow, we omit explicitly stating them. 
If $N=1$, then $\widehat{\lambda}_{1}(\xi)=1$ for
any irrational $\xi$, see~\cite{kredi}.
For larger $N$, the problem 
of determining $\widehat{\mathscr{D}}_{N}(\lambda)$
is only solved in a paper in preparation for
$N=2$ by Das, Fishman, Simmons, Urba\'{n}ski~\cite{ank},~\cite{ank2} and independently
by Bugeaud, Cheung, Chevallier~\cite{bcc}. 
However, when taking $n=2$ Theorem~\ref{schlechtsatz} 
does not provide new information on
any value $\widehat{\mathscr{D}}_{N}(\lambda)$. 

We believe that apart from obvious obstructions, the restrictions \eqref{eq:schwachmat}, \eqref{eq:schwachmaten} on sequences
are sufficient for the projections of suitable $\underline{\xi}\in \mathbb{R}^{\mathbb{N}}$ 
to attain all values simultaneously.

\begin{conjecture}  \label{kondsch}
	Let $(\lambda_{N})_{N\geq 1}$ and $(\widehat{\lambda}_{N})_{N\geq 1}$
	be non-increasing sequences of reals
	satisfying $\widehat{\lambda}_{N}\geq 1/N$ for $N\geq 1$, the estimates
	\begin{equation}  \label{eq:nonedtor}
	\widehat{\lambda}_{N}+
	\frac{\widehat{\lambda}_{N}^{2}}{\lambda_{N}}+\cdots+
	\frac{\widehat{\lambda}_{N}^{N}}{\lambda_{N}^{N-1}} \leq 1, 
	\qquad\qquad N\geq 1,
	\end{equation}
	originating in~\cite{mamo} 
	and for all $k\geq n\geq 1$ the relations
	\[
	\lambda_{k}\geq 
	\frac{n\lambda_{n}+n-1}{n(k-1)(\lambda_{n}+1)+1}, \qquad
	\widehat{\lambda}_{k}\geq 
	\frac{n+\widehat{\lambda}_{n}-2}{(n-1)(k-1)}.
	\]
    Then there is $\underline{\xi}\in \mathbb{R}^{\mathbb{N}}$ such that $\lambda_{N}(\underline{\xi}_{N})=\lambda_{N}$ and
	$\widehat{\lambda}_{N}(\underline{\xi}_{N})=\widehat{\lambda}_{N}$ 
	for all $N\geq 1$.
\end{conjecture}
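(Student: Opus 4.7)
The plan is to build $\underline{\xi}=(\xi_{1},\xi_{2},\ldots)$ by an inductive Cantor-type construction, working in Schmidt and Summerer's parametric geometry of numbers. In that framework, the exponents $\lambda_{N}(\underline{\xi}_{N})$ and $\widehat{\lambda}_{N}(\underline{\xi}_{N})$ are read off as a $\limsup$ and a $\liminf$, respectively, of normalized successive minima functions attached to $\underline{\xi}_{N}$. The idea is first to realize the prescribed asymptotic behavior abstractly, as a compatible family of $N$-systems in every dimension $N$, and only then to lift this combinatorial data to an actual real vector whose projections realize each template simultaneously.

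First, for each fixed $N$, the Marnat-Moshchevitin inequalities \eqref{eq:nonedtor} together with $\widehat{\lambda}_{N}\geq 1/N$ should be exactly the conditions needed to build a Schmidt-Summerer $N$-system whose trajectories realize the prescribed pair $(\lambda_{N},\widehat{\lambda}_{N})$; here one leans on Roy's realization theorem~\cite{roy3} for the spectrum. Next, the two recursive inequalities of the conjecture should allow consecutive $N$-systems to be chosen so that the $(N+1)$-system projects in a controlled way to the $N$-system: these inequalities are precisely the going-up constraints of Theorems~\ref{gutsatz} and~\ref{schlechtsatz}, and they should reflect exactly how templates deform when one adds a coordinate. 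Once such a compatible infinite tower of templates is in place, one proceeds coordinate by coordinate: assuming $\xi_{1},\ldots,\xi_{N}$ have been fixed so that $\underline{\xi}_{N}$ realizes its designated template, the coordinate $\xi_{N+1}$ is selected from a nested sequence of intervals that both forces the good approximations scheduled by the $(N+1)$-system to occur and excludes all integer tuples that would accidentally sharpen any previously fixed exponent. A diagonal argument then yields the desired $\underline{\xi}\in\mathbb{R}^{\mathbb{N}}$ in the intersection.

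The hardest part will be the exclusion step. Each new coordinate $\xi_{N+1}$ admits infinitely many integer relations $x_{0}+\xi_{1}x_{1}+\cdots+\xi_{N+1}x_{N+1}$ whose smallness could project to anomalously good approximations of some $\underline{\xi}_{M}$ with $M\leq N$, pushing $\lambda_{M}$ past its target value. The excluded regions must therefore be carefully dovetailed with the admissible intervals that implement the template at level $N+1$, and interval lengths have to shrink fast enough to ensure a non-empty nested intersection while still accommodating the prescribed approximations. A secondary difficulty is matching not only the $\limsup$ (for $\lambda_{N}$) but also the $\liminf$ (for $\widehat{\lambda}_{N}$) exactly, which requires a two-sided schedule: long windows on which every large $X$ admits a simultaneous approximation of the uniform order, interspersed with shorter windows where the sharper $\lambda_{N}$-approximations occur. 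Managing both bookkeepings simultaneously over all $N\geq 1$, without letting any dimension spoil another, is the main bottleneck and the reason the statement is posed only as a conjecture.
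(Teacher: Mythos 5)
The statement you are addressing is labelled \emph{Conjecture}~\ref{kondsch} in the paper, and the paper offers no proof of it; it is posed as an open problem. So there is no ``paper's proof'' against which to compare, and the more relevant question is whether your write-up actually constitutes a proof. It does not, and indeed you say as much in your last sentence: you explicitly leave the main bottleneck --- ``managing both bookkeepings simultaneously over all $N\geq 1$, without letting any dimension spoil another'' --- unresolved and cite it as ``the reason the statement is posed only as a conjecture.'' What you have written is a strategy outline, not a proof.

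That said, the outline correctly locates the relevant machinery (parametric geometry of numbers, the Marnat--Moshchevitin constraint \eqref{eq:nonedtor} as a necessary restriction, Theorems~\ref{gutsatz} and~\ref{schlechtsatz} as the going-up constraints, Roy's realization theorem for the single-dimension lift). But there are several genuine gaps beyond the one you name. First, Roy's realization result in~\cite{roy3} produces, for a prescribed template in a \emph{fixed} dimension $N$, some real vector in $\mathbb{R}^N$ whose combined graph tracks that template; it says nothing about building a single infinite vector whose finite projections simultaneously track an infinite compatible family of templates, and the relation between the template of $\underline{\xi}_{N+1}$ and that of its projection $\underline{\xi}_N$ is not understood well enough to make the ``compatible tower'' step rigorous. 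Second, it is not established that \eqref{eq:nonedtor} together with $\widehat{\lambda}_N\geq 1/N$ are \emph{exactly} the conditions under which an $N$-template with the prescribed pair of exponents exists; Marnat--Moshchevitin's inequality is necessary, but the converse realizability in full generality is its own nontrivial statement. Third, the exclusion step --- removing from each admissible interval for $\xi_{N+1}$ the countable union of neighborhoods around integer relations that would corrupt the exponents of some $\underline{\xi}_M$, $M\leq N$ --- must be shown to leave a nonempty nested intersection while still containing the points that enforce the scheduled approximations, and there is no estimate offered for why this is feasible. Until these three ingredients are supplied, the argument remains a heuristic sketch, consistent with the paper's decision to state the result as a conjecture.
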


This resembles the ''main problem'' formulated in~\cite[Section~3.4]{bugbuch} regarding approximation to the Veronese curve, which however
involves different types of exponents. Less audacious conjectures can be readily stated by
considering only one type of exponents, i.e. either ordinary or uniform.
We omit the formulation.

We close with a version of Theorem~\ref{t2} for the 
$\mathbb{Q}$-linearly independent case, that is again considerably weaker
but admits an easy deduction from classical transference principles.

\begin{theorem}  \label{consid}
	Upon the assumptions of Theorem~\ref{allgemein}, we have
	\[
	\lambda_{k}(\underline{\xi}_{k}) \geq \frac{(\widehat{w}_{n}(\underline{\xi}_{n})-1)w_{n}(\underline{\xi}_{n})}{((k-2)\widehat{w}_{n}(\underline{\xi}_{n})+1)w_{n}+(k-1)\widehat{w}_{n}(\underline{\xi}_{n})}.
	\]
\end{theorem}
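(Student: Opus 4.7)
The plan is to combine the Bugeaud--Laurent inequality \eqref{eq:this}, which the authors have noted is valid in the stated generality for any $\mathbb{Q}$-linearly independent tuple $\{1,\xi_{1},\ldots,\xi_{k}\}$, together with the trivial monotonicity of the linear form exponents under projections and a monotonicity check for the rational function appearing on the right hand side of \eqref{eq:this}.

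First I would apply \eqref{eq:this} directly to the vector $\underline{\xi}_{k}$ of dimension $k$, which is admissible by the $\mathbb{Q}$-linear independence hypothesis on $\{1,\xi_{1},\ldots,\xi_{k}\}$. This yields
\[
\lambda_{k}(\underline{\xi}_{k}) \;\geq\; \frac{(\widehat{w}_{k}(\underline{\xi}_{k})-1)\,w_{k}(\underline{\xi}_{k})}{((k-2)\widehat{w}_{k}(\underline{\xi}_{k})+1)\,w_{k}(\underline{\xi}_{k})+(k-1)\widehat{w}_{k}(\underline{\xi}_{k})}.
\]
Second, I would invoke the basic monotonicity $w_{k}(\underline{\xi}_{k})\geq w_{n}(\underline{\xi}_{n})$ and $\widehat{w}_{k}(\underline{\xi}_{k})\geq \widehat{w}_{n}(\underline{\xi}_{n})$: any integer relation $|x_{0}+\xi_{1}x_{1}+\cdots+\xi_{n}x_{n}|\leq X^{-w}$ with $\max_{1\leq j\leq n}|x_{j}|\leq X$ extends to the larger ambient vector $\underline{\xi}_{k}$ by setting $x_{n+1}=\cdots=x_{k}=0$, so the suprema realized in the larger dimension cannot decrease.

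Finally, I would verify that for each fixed $k\geq 2$, the rational function
\[
f(w,\widehat{w}) \;=\; \frac{(\widehat{w}-1)\,w}{((k-2)\widehat{w}+1)\,w+(k-1)\widehat{w}}
\]
is non-decreasing in each argument on the relevant range $w\geq \widehat{w}\geq n\geq 2$. Setting $a=\widehat{w}-1$, $b=(k-2)\widehat{w}+1$, $c=(k-1)\widehat{w}$, direct differentiation gives $\partial_{w}f = ac/(bw+c)^{2}>0$, and after simplification $\partial_{\widehat{w}}f = w(k-1)(w+1)/(bw+c)^{2}>0$, so $f$ is strictly increasing in both variables. Substituting the bounds $w_{n}(\underline{\xi}_{n})\leq w_{k}(\underline{\xi}_{k})$ and $\widehat{w}_{n}(\underline{\xi}_{n})\leq \widehat{w}_{k}(\underline{\xi}_{k})$ into the estimate from the first step then yields the claim. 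The essentially only nontrivial ingredient is the Bugeaud--Laurent inequality \eqref{eq:this} in its $\mathbb{Q}$-linearly independent form, which is quoted from the literature; the remaining steps are elementary, so no real obstacle is anticipated. The boundary case $n=1$ is trivial since $w_{1}=\widehat{w}_{1}=1$ for irrational $\xi_{1}$, making the right-hand side of Theorem~\ref{consid} vanish.
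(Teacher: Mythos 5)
Your proposal is correct and is essentially the same argument as the paper's: the paper's proof of Theorem~\ref{consid} reads simply ``We combine $w_{k}(\underline{\xi}_{k})\geq w_{n}(\underline{\xi}_{n})$, $\widehat{w}_{k}(\underline{\xi}_{k})\geq \widehat{w}_{n}(\underline{\xi}_{n})$ with \eqref{eq:toroben} for $N=k$ and $\underline{\zeta}=\underline{\xi}_{k}$,'' which is exactly your two main steps; you additionally make explicit the monotonicity check on $f(w,\widehat{w})$, which the paper leaves implicit, and your computation $\partial_{w}f=ac/(bw+c)^{2}$, $\partial_{\widehat{w}}f=w(k-1)(w+1)/(bw+c)^{2}$ is correct. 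One minor slip: for $n=1$ one only has $\widehat{w}_{1}(\xi_{1})=1$ for irrational $\xi_{1}$, while $w_{1}(\xi_{1})$ can be any value in $[1,\infty]$; however, since the numerator already carries the factor $\widehat{w}_{1}-1=0$, the right-hand side still vanishes and the boundary case remains trivial as you assert.
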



\section{Parametric geometry of numbers and preliminary results} \label{paramet}

Our proofs are based on classical tools
from geometry of numbers, in particular Minkowski's Convex Body Theorems.
To simplify to some extent the slightly cumbersome calculations that appear,
we work within the framework of parametric geometry of numbers
introduced by Schmidt and Summerer in~\cite{ss}. 
We slightly deviate from its original notation and put emphasis
on the concrete estimates we require. We
refer to~\cite{ss,ssch} for a more comprehensive introduction, see also Roy~\cite{roy3} for a different setup. Recall 
that the $j$-th successive minimum of a convex body $K$ with respect
to a lattice $\Lambda$ is the minimum $\lambda>0$ so that $\lambda K$
contains $j$ linearly independent points of $\Lambda$.

\subsection{Parametric functions}
Let $N\geq 1$ an integer and 
$\underline{\xi}\in\mathbb{R}^{N}$ be given.
Let $q>0$ be a parameter and let $Q=e^q$. Define convex bodies
\[
K(Q)=\{ (z_{0},\ldots,z_{N}): \vert z_{0}\vert \leq Q,\quad \vert z_{1}\vert \leq Q^{-1/N},\ldots,
\quad \vert z_{N}\vert \leq Q^{-1/N}\},
\]
and a lattice by
\[
\Lambda_{\underline{\xi}}=\{ (x,\xi_{1}x-y_{1}, \ldots,\xi_{N}x-y_{N}): \; x,y_{j}\in\mathbb{Z}\}.
\]
The successive minima of $K(Q)$ with 
respect to $\Lambda_{\underline{\xi}}$
contain important information on simultaneous rational
approximation to $(\xi_{1},\ldots,\xi_{N})$.
For $1\leq j\leq N+1$,
denote by $\tau_{N,j}(Q)$ the $j$-th
successive minimum and derive $\psi_{N,j}(Q)$ and $L_{N,j}(q)$ as in~\cite{ss} via
\[
\psi_{N,j}(Q)=\frac{\log \tau_{N,j}(Q)}{q}, \qquad\quad L_{N,j}(q)=\log \tau_{N,j}(Q)=q\psi_{N,j}(Q).
\]
The functions $L_{N,j}$ are piecewise linear with 
slopes among $\{-1,1/N\}$, see~\cite{ss}.

The linear form problem corresponds to dual approximation problem, i.e.
the successive minima problem with
respect to the dual parametric convex bodies
\[
K^{\ast}(Q)=\{ \underline{y}\in\mathbb{R}^{N+1}: 
\vert\underline{y}\cdot \underline{z}\vert\leq 1, \underline{z}\in K(Q)\}
\] 
given in coordinates by
\[
K^{\ast}(Q)=\{ (y_{0},\ldots,y_{N})\in\mathbb{R}^{N+1}: Q\vert y_{0}\vert+
Q^{-N}\vert y_{1}\vert+\cdots+Q^{-N}\vert y_{N}\vert \leq 1\},
\]
and the dual lattice $\Lambda_{\underline{\xi}}^{\ast}=\{ \underline{y}\in\mathbb{R}^{N+1}: 
\underline{y}\cdot \underline{z}\in\mathbb{Z}, \underline{z}\in \Lambda_{\xi}\}$,
given as
\[
\Lambda_{\underline{\xi}}^{\ast}=
\{ (x_{0}+\xi_{1}x_{1}+\cdots+\xi_{N}x_{N}, x_{1},\ldots,x_{N})\in\mathbb{R}^{N+1}: x_{j}\in\mathbb{Z}\}.
\]
Again, for $1\leq j\leq N+1$,  from successive minima 
with respect to $K^{\ast}(Q)$
and $\Lambda_{\underline{\xi}}^{\ast}$ we derive functions $\psi_{N,j}^{\ast}(Q)$
and $L_{N,j}^{\ast}(q)$ accordingly.
Any $L_{N,j}^{\ast}(q)$ is locally induced by the 
function $L_{N,\underline{x}}^{\ast}(q)$ for some
$\underline{x}=(x_{0},x_{1},\ldots,x_{N})\in \mathbb{Z}^{N+1}$
defined as
\begin{equation} \label{eq:defin}
L_{N,\underline{x}}^{\ast}(q)= \max\left\{ \log \Vert \underline{x}\Vert_{\infty}-\frac{q}{N},\; \log \scp{\underline{x},\underline{\xi}}_{N}+q \right\},
\end{equation}
where
\[
\Vert \underline{x}\Vert_{\infty}= \max_{0\leq i\leq N} \vert x_{i}\vert, 
\qquad\qquad \scp{\underline{x},\underline{\xi}}_{N}= \vert x_{0}+\xi_{1} x_{1}+\cdots+\xi_{N}x_{N}\vert.
\]
The functions $L_{N,j}^{\ast}(q)$ therefore have slope among $\{1,-1/N\}$.
For $j=1$, the value $L_{N,1}^{\ast}(q)$ just equals
the minimum of $L_{N,\underline{x}}^{\ast}(q)$
over $\underline{x}\in\mathbb{Z}^{N+1}\setminus \{ \underline{0}\}$. 
Also notice that for successive powers $\underline{\xi}=(\xi,\xi^2,\ldots,\xi^N)$ the scalar product 
$\scp{\underline{x},\underline{\xi}}_{N}$ may be written $|P(\xi)|$ with $P\in \mathbb{Z}[T]$ of degree at most $N$. 
We close this section by defining the upper and lower limits
\[
\underline{\psi}_{N,j}=\liminf_{Q\to\infty} \psi_{N,j}(Q),
\qquad\quad \overline{\psi}_{N,j}=\limsup_{Q\to\infty} \psi_{N,j}(Q),
\]
and $\underline{\psi}_{N,j}^{\ast}, \overline{\psi}_{N,j}^{\ast}$ accordingly
%
%
that are linked to classical exponents, see next section.

\subsection{Minkowski's Theorems, Mahler's duality, relation to classical exponents}  \label{s51}

Variants of Dirichlet's Theorem, 
or Minkowski's First Convex Body Theorem, imply 
$\psi_{N,1}(Q)<0$ and
$L_{N,1}(q)<0$, as well as $\psi_{N,1}^{\ast}(Q)<0$ and $L_{N,1}^{\ast}(q)<0$, for all $q>0$.
Minkowski's Second Convex Body Theorem yields
\begin{equation} \label{eq:lll}
\left\vert \sum_{j=1}^{N+1} \psi_{N,j}(Q)\right\vert \leq \frac{C_{N}}{q},\qquad
\left\vert \sum_{j=1}^{N+1} L_{N,j}(q)\right\vert \leq C_{N}, 
\qquad\qquad  q>0,
\end{equation}
and similarly
\begin{equation} \label{eq:rrr}
\left\vert \sum_{j=1}^{N+1} \psi_{N,j}^{\ast}(Q)\right\vert \leq \frac{C_{N}^{\ast}}{q},\qquad
\left\vert \sum_{j=1}^{N+1} L_{N,j}^{\ast}(q)\right\vert \leq C_{N}^{\ast}, 
\qquad\qquad  q>0,
\end{equation}
for constants $C_{N}>0$ and $C_{N}^{\ast}>0$.  

Our two approximation problems, simultaneous approximation 
and linear forms, are connected by
Mahler's theorem on dual convex bodies. It implies 
\begin{equation} \label{eq:mahler}
\vert\psi_{N,1}(Q)+\psi_{N,N+1}^{\ast}(Q)\vert \leq \frac{c_{N}}{q}, \qquad\qquad \vert\psi_{N,1}^{\ast}(Q)+\psi_{N,N+1}(Q)\vert \leq \frac{c_{N}}{q},
\end{equation}
for some constant $c_{N}>0$ independent from $Q$.
In particular
\begin{equation} \label{eq:jaja}
\underline{\psi}_{N,1}=-\overline{\psi}_{N,N+1}^{\ast}, \qquad\qquad 
\overline{\psi}_{N,1}=-\underline{\psi}_{N,N+1}^{\ast}.
\end{equation}
From \eqref{eq:rrr} and \eqref{eq:mahler} we obtain
\begin{equation} \label{eq:schranke}
\sum_{j=1}^{N} \psi_{N,j}^{\ast}(Q)= \psi_{N,1}(Q)+O(q^{-1}),
\qquad \sum_{j=1}^{N} \psi_{N,j}(Q)= \psi_{N,1}^{\ast}(Q)+O(q^{-1}).
\end{equation}
From \eqref{eq:rrr} one may readily derive~\cite[(1.11)]{ssch}, which reads
in our notation
\begin{equation} \label{eq:thatp}
j\underline{\psi}_{N,j}+ (N+1-j)\overline{\psi}_{N,N+1} \geq 0, \qquad
j\overline{\psi}_{N,j}+ (N+1-j)\underline{\psi}_{N,N+1} \geq 0
\end{equation}
and similarly for $\psi_{N,j}^{\ast}$.
%
%
For $j=1$ we immediately deduce~\cite[(1.11)]{ssch} that may be written
\begin{equation} \label{eq:duales}
-\overline{\psi}_{N,N+1}^{\ast}(Q)\leq \frac{1}{N}\cdot  \underline{\psi}_{N,1}^{\ast}(Q), \qquad\qquad
-\overline{\psi}_{N,N+1}(Q)\leq \frac{1}{N}\cdot 
\underline{\psi}_{N,1}(Q).
\end{equation}
In fact only the right estimates occur in~\cite{ssch}, 
but the dual left inequalities admit an analogous proof.

In~\cite[Theorem~1.4]{ss}, a fundamental link between 
the upper and lower limits on one side
and the exponents from Section~\ref{introd} on the other side
is given via the identities
\begin{equation} \label{eq:umrechnen}
(1+\lambda_{N}(\underline{\xi}))(1+\underline{\psi}_{N,1})=
(1+\widehat{\lambda}_{k}(\underline{\xi}))(1+\overline{\psi}_{N,1})=\frac{N+1}{N}, 
\end{equation}
and 
\begin{equation} \label{eq:umrechnen2}
(1+w_{N}(\underline{\xi}))\Big(\frac{1}{N}+\underline{\psi}_{N,1}^{\ast}\Big)=
(1+\widehat{w}_{N}(\underline{\xi}))\Big(\frac{1}{N}+\overline{\psi}_{N,1}^{\ast}\Big)=\frac{N+1}{N}.
\end{equation}
In fact we will often implicitly use parametric versions of \eqref{eq:umrechnen}, \eqref{eq:umrechnen2}, stating that 
for any $1\leq j\leq N+1$, a set of $j$ linearly independent 
integer vectors inducing 
an exponent $\lambda$ resp. $w$ in \eqref{eq:lambdar} resp. \eqref{eq:lammda} gives rise to $q$ with the according identity linking $\lambda$ with $\psi_{N,j}(q)$ resp. $w$ with $\psi_{N,j}^{\ast}(q)$.

\subsection{A transference lemma and an observation on minimal polynomials}
The following lemma stems from 
a simple calculation and will be frequently applied throughout our proofs. It 
describes the transformation of the 
functions $L_{n,\underline{x}}^{\ast}$ above
induced by some $\underline{x}=(x_{0},\ldots,x_{n})\in\mathbb{Z}^{n+1}$,
into $L_{k,\underline{x}^{\prime}}^{\ast}$ in some larger dimension $k>n$ upon setting $\underline{x}^{\prime}=(x_{0},\ldots,x_{n},0,\ldots,0)\in \mathbb{Z}^{k+1}$. 
In the case of successive powers 
we easily gain some improvement by varying $\underline{x}^{\prime}$
that turns out crucial.

\begin{lemma} \label{lemuren}
	Let $k\geq n\geq 1$ be integers. Further let
	$\underline{\xi}=(\xi_{1},\ldots,\xi_{k})$ be a real
	vector 
	and $\tilde{\underline{\xi}}=(\xi_{1},\ldots,\xi_{n})$
	the restriction of $\underline{\xi}$ to the first $n$ components.
	Assume 
	$\underline{x}=(x_{0},x_{1},\ldots,x_{n})\in \mathbb{Z}^{n+1}$ 
	and $q>0$ and $\psi$ are parameters so that the function
	$L_{n,\underline{x}}^{\ast}$ associated to 
	$\tilde{\underline{\xi}}$ and $\underline{x}$
	satisfies
	\[
	L_{n,\underline{x}}^{\ast}(q) \leq \psi q.
	\]
	Let
	\begin{equation} \label{eq:os}
	q^{\prime}= q\frac{(n+1)k}{n(k+1)}, \qquad\qquad 
	\psi^{\prime}=\Phi_{k,n}(\psi)
	\end{equation}
	where $\Phi_{k,n}$ is the affine function given as
	\begin{equation}  \label{eq:phink}
	\Phi_{k,n}(t):=(t-1)\frac{(k+1)n}{k(n+1)}+1=\frac{n(k+1)}{(n+1)k}t+ \frac{k-n}{k(n+1)}.
	\end{equation}
	Then for
	\begin{equation} \label{eq:ous}
	\underline{x}^{\prime}=(x_{0},x_{1},\ldots,x_{n},0,0,\ldots,0)\in \mathbb{Z}^{k+1},
	\end{equation}
	we have
	\[
	L_{k,\underline{x}^{\prime}}^{\ast}(q^{\prime}) \leq \psi^{\prime}q^{\prime}.
	\]
	Moreover, if $\xi_{j}=\xi^{j}$ for $1\leq j\leq n$ and 
	some $\xi\in(0,1)$, then 
	the same claim holds for any 
	vector $\underline{x}^{\prime}=\underline{x}^{\prime}_{i}$
	of the form 
	\begin{equation} \label{eq:ous2}
	\underline{x}^{\prime}_{i}=(0,\ldots,0,x_{0},x_{1},\ldots,x_{n},0,0,\ldots,0)\in \mathbb{Z}^{k+1}, \qquad\qquad  1\leq i\leq k-n+1,
	\end{equation}
	where in $\underline{x}^{\prime}_{i}$ the 
	coordinate $x_{0}$ is in position $i$. 
\end{lemma}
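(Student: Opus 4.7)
The plan is to unpack the hypothesis via \eqref{eq:defin}, observe that zero-padding leaves the sup-norm and the scalar product unchanged, and then reduce the desired inequality to two numerical identities which pin down the choices of $q'$ and $\Phi_{k,n}$.

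First I would rewrite $L_{n,\underline{x}}^{\ast}(q) \leq \psi q$ via \eqref{eq:defin} with $N = n$ as the pair of bounds
\[
\log \|\underline{x}\|_{\infty} \leq \bigl(\psi + \tfrac{1}{n}\bigr) q, \qquad \log \scp{\underline{x}, \tilde{\underline{\xi}}}_{n} \leq (\psi - 1) q.
\]
Since the last $k-n$ coordinates of $\underline{x}' = (x_0, \ldots, x_n, 0, \ldots, 0) \in \mathbb{Z}^{k+1}$ vanish, one trivially has $\|\underline{x}'\|_{\infty} = \|\underline{x}\|_{\infty}$ and $\scp{\underline{x}', \underline{\xi}}_{k} = \scp{\underline{x}, \tilde{\underline{\xi}}}_{n}$. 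Substituting these into \eqref{eq:defin} with $N=k$, the desired bound $L_{k,\underline{x}'}^{\ast}(q') \leq \psi' q'$ collapses to the two scalar inequalities
\[
\bigl(\psi + \tfrac{1}{n}\bigr) q \leq \bigl(\psi' + \tfrac{1}{k}\bigr) q', \qquad (\psi - 1) q \leq (\psi' - 1) q'.
\]

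The verification is then a short algebraic check: with $q'/q = (n+1)k/(n(k+1))$ and $\psi' = \Phi_{k,n}(\psi)$, both inequalities hold with equality. One verifies directly from \eqref{eq:phink} that $\Phi_{k,n}(\psi) + 1/k = \frac{n(k+1)}{(n+1)k}(\psi + 1/n)$ and $\Phi_{k,n}(\psi) - 1 = \frac{n(k+1)}{(n+1)k}(\psi - 1)$, after which both identities fall out. Conversely, demanding simultaneous equality in the two displayed inequalities uniquely determines the ratio $q'/q$ and the affine map $\Phi_{k,n}$, which explains the particular formulas in \eqref{eq:os}--\eqref{eq:phink}.

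For the Veronese refinement, the polynomial interpretation $\scp{\underline{x}, \underline{\xi}}_k = |P(\xi)|$ noted after \eqref{eq:defin} makes the shift transparent. The vector $\underline{x}'_i$ from \eqref{eq:ous2} corresponds to the polynomial $T^{i-1} Q(T)$, where $Q(T) = x_0 + x_1 T + \cdots + x_n T^n$ is the polynomial associated to $\underline{x}$. Consequently $\scp{\underline{x}'_i, \underline{\xi}}_k = |\xi|^{i-1} \scp{\underline{x}, \tilde{\underline{\xi}}}_n \leq \scp{\underline{x}, \tilde{\underline{\xi}}}_n$ because $\xi \in (0,1)$, while $\|\underline{x}'_i\|_{\infty} = \|\underline{x}\|_{\infty}$ since the nonzero block is merely translated. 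Substituting these into \eqref{eq:defin} with $N=k$ gives $L_{k,\underline{x}'_i}^{\ast}(q') \leq L_{k,\underline{x}'_1}^{\ast}(q') \leq \psi' q'$ for every admissible $i$. No genuine obstacle is expected; the only care required is the index bookkeeping to see that $1 \leq i \leq k-n+1$ is precisely the range for which the shifted block $x_0, x_1, \ldots, x_n$ still fits inside a vector in $\mathbb{Z}^{k+1}$.
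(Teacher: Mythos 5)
Your proposal is correct and follows essentially the same route as the paper: unpack $L^{\ast}$ via \eqref{eq:defin}, note that zero-padding preserves $\|\cdot\|_\infty$ and the scalar product (with an inequality in the shifted Veronese case since $\xi\in(0,1)$), and check two scalar inequalities for $q',\psi'$. The paper leaves the algebraic verification to the reader; you carry it out and observe the two equalities $\Phi_{k,n}(\psi)+1/k=\tfrac{n(k+1)}{(n+1)k}(\psi+1/n)$ and $\Phi_{k,n}(\psi)-1=\tfrac{n(k+1)}{(n+1)k}(\psi-1)$, which is a clean way to see where the formulas \eqref{eq:os}--\eqref{eq:phink} come from.
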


\begin{proof}
	First we treat the case of general vectors 
	$\underline{\xi}$.
	Observe that obviously for $\underline{x}^{\prime}$ as in
	\eqref{eq:ous} we have
	\[
	\Vert \underline{x}\Vert_{\infty}=\Vert \underline{x}^{\prime}\Vert_{\infty}, \qquad
	\scp{\underline{x}^{\prime},\underline{\xi}}_{k}= \scp{\underline{x},\tilde{\underline{\xi}}}_{n}.
	\]
	Hence, according to \eqref{eq:defin} we have
	\[
	L_{n,\underline{x}}^{\ast}(q)= \max \left\{ \log \Vert \underline{x}\Vert_{\infty}-\frac{q}{n},\; \log \scp{\underline{x},\tilde{\underline{\xi}}}_{n}+q \right\}
	\]
	and
	\[
	L_{k,\underline{x}^{\prime}}^{\ast}(q^{\prime})= \max \left\{ \log \Vert \underline{x}\Vert_{\infty}-\frac{q^{\prime}}{k},\; \log \scp{\underline{x},\tilde{\underline{\xi}}}_{n}+q^{\prime} \right\}.
	\]
	Thus it suffices to check that for $q^{\prime}, \psi^{\prime}$ as given 
	in \eqref{eq:os}, the inequalities
	\[
	\log \Vert \underline{x}\Vert_{\infty}-\frac{q}{n}\leq q\psi, \qquad
	\log \scp{\underline{x},\tilde{\underline{\xi}}}_{n}+q\leq q\psi
	\]
	imply
	\[
	\log \Vert \underline{x}\Vert_{\infty}-\frac{q^{\prime}}{k}\leq q^{\prime}\psi^{\prime}, \qquad \log \scp{\underline{x},\tilde{\underline{\xi}}}_{n}+q^{\prime}\leq q^{\prime}\psi^{\prime}.
	\]
	We leave these elementary calculations to the reader.
	
	Now take the special case $\xi_{j}=\xi^{j}$ for $1\leq j\leq n$ and 
	some $\xi\in(0,1)$. Then 
	if we identify $\underline{x}$ with the polynomial 
	$P(T)=x_{0}+x_{1}T+\cdots+x_{n}T^{n}$, we readily check that
	a right shift of $\underline{x}$ within $\underline{x}^{\prime}$ 
	corresponds
	to a multiplictation by $T$, so that
	$\underline{x}^{\prime}_{i}$ corresponds to $T^{i-1}P(T)$ for
	$1\leq i\leq k-n+1$. 
	Since $\xi\in(0,1)$
	we have $\vert\xi^{j}P(\xi)\vert\leq \vert P(\xi)\vert$
	for $j\geq 0$, and thus again
	\[
	\Vert \underline{x}\Vert_{\infty}=
	\Vert \underline{x}^{\prime}_{i}\Vert_{\infty}, \qquad
	\scp{\underline{x}^{\prime}_{i},\underline{\xi}}_{k}\leq \scp{\underline{x},\tilde{\underline{\xi}}}_{n},
	\]
	for any $1\leq i\leq k-n+1$.
	The claim follows as above.
\end{proof}

We finish this section with a proposition that extends an observation of Wirsing~\cite[Hilfssatz~4]{wirsing}. It concerns the degrees 
of well approximating polynomials that play a role
in the proofs below. It is unrelated to
parametric geometry of numbers. 

\begin{proposition}  \label{propper}
	Let $\xi$ be a transcendental real number, $n\geq 2$ an integer and $\epsilon>0$. 
	Then
	\begin{equation}  \label{eq:infoft}
	|P(\xi)| < H(P)^{-w_{n}(\xi)+\epsilon  }
	\end{equation}
	has infinitely many solutions in irreducible integer
	polynomials $P$ of arbitrarily large height and degree at 
	least $\lceil\widehat{w}_{n}(\xi)\rceil-n+1$ and at most $n$.
	On the other hand, the inequality
	\begin{equation}  \label{eq:infoft2}
	|P(\xi)| < H(P)^{-\widehat{w}_{n}(\xi)+\epsilon  }
	\end{equation}
	only finitely many solutions in integer polynomials of degree at most $\lceil\widehat{w}_{n}(\xi)\rceil-n$ if $\epsilon$ is small enough.
	Moreover, if $w_{n}(\xi)>w_{n-1}(\xi)$ and $\epsilon$ is small enough, then there exist
	$P$ irreducible of degree $n$ satisfying \eqref{eq:infoft}
	of arbitrarily large height. 
\end{proposition}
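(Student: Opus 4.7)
I would handle the three claims in the order Claim~2 (finiteness), then Claim~1 (existence in the degree range $[\lceil\widehat{w}_{n}(\xi)\rceil-n+1,n]$), and finally Claim~3 (existence in degree exactly $n$ under the assumption $w_{n}(\xi)>w_{n-1}(\xi)$). For Claim~1, the plan is to start from a sequence of integer polynomials $P$ with $\deg P\leq n$, $H(P)\to\infty$ and $|P(\xi)|<H(P)^{-w_{n}(\xi)+\epsilon/2}$, guaranteed by the definition of $w_{n}(\xi)$. Factoring $P=P_{1}\cdots P_{s}$ over $\mathbb{Z}$ and invoking Gelfond's lemma yields $\prod_{i}H(P_{i})\asymp H(P)$ with an implied constant depending only on $n$; combined with $|P(\xi)|=\prod_{i}|P_{i}(\xi)|$, a weighted-average argument produces an irreducible factor $P_{i_{0}}$ with $H(P_{i_{0}})$ unbounded along a subsequence and $|P_{i_{0}}(\xi)|<H(P_{i_{0}})^{-w_{n}(\xi)+\epsilon}$. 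Its degree is at most $n$ automatically, while the lower bound $\deg P_{i_{0}}\geq\lceil\widehat{w}_{n}(\xi)\rceil-n+1$ is forced by Claim~2 (since $\widehat{w}_{n}(\xi)\leq w_{n}(\xi)$). Claim~3 then follows readily: with $\epsilon\in(0,w_{n}(\xi)-w_{n-1}(\xi))$, if infinitely many of the resulting $P_{i_{0}}$ had degree at most $n-1$, the definition of $w_{n-1}(\xi)$ would yield $w_{n-1}(\xi)\geq w_{n}(\xi)-\epsilon>w_{n-1}(\xi)$, a contradiction, so all but finitely many have degree exactly $n$.

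The main obstacle is Claim~2, which I would attack by contradiction: assume there are infinitely many integer polynomials $P$ of degree $d\leq\lceil\widehat{w}_{n}(\xi)\rceil-n$ with $H(P)\to\infty$ satisfying \eqref{eq:infoft2}. After translating $\xi$ by an integer and possibly replacing $\xi$ by $-\xi$, both operations preserving all exponents of approximation, we may assume $\xi\in(0,1)$; the second half of Lemma~\ref{lemuren} then supplies $j:=n-d+1\geq 2$ (by the Davenport--Schmidt bound \eqref{eq:njor}) linearly independent integer polynomials $P, TP, \ldots, T^{n-d}P$ of degree at most $n$, each of height $H(P)$, satisfying $|T^{i}P(\xi)|\leq|P(\xi)|$. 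I would then couple each such $P$ with an auxiliary integer polynomial $Q$ of degree at most $n$ witnessing the uniform exponent, with $H(Q)=H(P)^{\alpha}$ for a carefully chosen $\alpha>0$ and $|Q(\xi)|\leq H(Q)^{-\widehat{w}_{n}(\xi)+\epsilon/2}$, aiming to extract a contradiction from a Mignotte-type upper bound on the non-zero integer resultant $\mathrm{Res}(T^{i}P, Q)$ for some shift $T^{i}P$ coprime to $Q$: combining the smallness of $|T^{i}P(\xi)|$ and $|Q(\xi)|$ with polynomial bounds on the heights should force $|\mathrm{Res}|<1$. The hypothesis $d\leq\lceil\widehat{w}_{n}(\xi)\rceil-n$ is precisely what allows $\alpha$ to be tuned so that the resultant bound closes; non-coprime pairs are handled by extracting a common divisor and iterating. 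The delicate technical point is the exponent balancing in the resultant argument together with ensuring that at least one shift $T^{i}P$ can be taken coprime to $Q$.
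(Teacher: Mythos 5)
Your treatment of Claims~1 and~3 is essentially what the paper does (the paper just cites Wirsing's Hilfssatz~4 for the existence of irreducible polynomials realizing $w_n$ and then adds the observation that $\epsilon<w_n-w_{n-1}$ forces degree exactly $n$; your Gelfond-factorization extraction reconstitutes Wirsing's argument, and deducing the degree lower bound from the finiteness of Claim~2 is also the paper's route). The genuine divergence is Claim~2. The paper simply reduces to the minimal degree $m$ admitting infinitely many solutions and invokes the cited inequality $\min\{w_m(\xi),\widehat{w}_n(\xi)\}\leq m+n-1$ from Bugeaud--Schleischitz~\cite[Theorem~2.3]{buschlei}, closing with a two-line logical contradiction; you instead attempt a self-contained resultant proof. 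That is a legitimate alternative, and the exponent balancing does in fact close: with $\deg P=d$, $\deg Q\leq n$, $H(Q)\ll H(P)$, coprimality of $P,Q$, and the Mignotte-type bound $1\leq|\mathrm{Res}(P,Q)|\ll H(P)^{\deg Q-1}H(Q)^{d-1}\bigl(H(Q)|P(\xi)|+H(P)|Q(\xi)|\bigr)$, the hypothesis $d\leq\lceil\widehat{w}_n(\xi)\rceil-n$ (equivalently $\widehat{w}_n(\xi)>d+n-1$) is exactly the threshold at which both terms can be driven below $1$ for small $\epsilon$.

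However, the way you propose to secure coprimality is a genuine misstep. Multiplying $P$ by powers of $T$ never removes a common factor of $P$ and $Q$ other than $T$ itself: if an irreducible $G\neq T$ divides both $P$ and $Q$, it also divides every $T^iP$. So searching over the shifts $T^iP$ cannot rescue the resultant argument in the non-coprime case, and Lemma~\ref{lemuren} (which governs passage to a \emph{larger} ambient dimension $k>n$) is not the relevant tool here anyway; the trivial observation $|\xi^iP(\xi)|\leq|P(\xi)|$ is all you are using, and it is beside the point. The clean fix, consistent with the shape of your Claim~1 argument, is to first pass (via Wirsing's extraction, which you already set up) to infinitely many \emph{irreducible} $P$ of degree $d\leq\lceil\widehat{w}_n(\xi)\rceil-n$ satisfying \eqref{eq:infoft2}, and then choose the uniform witness $Q$ at scale $X=c\,H(P)$ with $c$ a small constant depending only on $n$ (so $H(Q)\leq c\,H(P)$): by Gelfond's Lemma \eqref{eq:gelfond}, $P\mid Q$ would force $H(Q)\gg H(P)$, a contradiction, so $P$ and $Q$ are automatically coprime and the resultant is a non-zero integer. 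With that replacement the resultant route is correct and does reprove \eqref{eq:fromm} in the special case needed, making the proposition self-contained at the cost of redoing a cited lemma.
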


\begin{proof} 
	By~\cite[Hilfssatz~4]{wirsing}, we may
	choose irreducible integer polynomials $P$ of degree at most $n$ with property \eqref{eq:infoft} of arbitrarily large height. 
	The last, conditional claim follows immediately when taking $\epsilon=(w_{n}(\xi)-w_{n-1}(\xi))/2$ as then these polynomials
	cannot have degree smaller than $n$. For the other claims, 
	we conclude by showing that the degree
	of polynomials $P$ satisfying the weaker property \eqref{eq:infoft2} can be $\lceil\widehat{w}_{n}(\xi)\rceil-n$ or less only for finitely many $P$.
	
	So let $m\in\{1,2,\ldots, n\}$ be the minimum integer so that \eqref{eq:infoft2} has
	infinitely many solutions in integer polynomials $P$ of degree
	$m$ or less. 
	It was shown in~\cite[Theorem~2.3]{buschlei} that for any
	transcendental real $\xi$ and any integers $m,n\geq 1$ we have
	\begin{equation} \label{eq:fromm}
	\min\{ w_{m}(\xi), \widehat{w}_{n}(\xi) \} \leq m+n-1.
	\end{equation}
	Assume contrary to our claim that $m\leq \lceil\widehat{w}_{n}(\xi)\rceil-n$. Then
	$\widehat{w}_{n}(\xi)> \lceil\widehat{w}_{n}(\xi)\rceil -1 \geq m+n-1$, and from \eqref{eq:fromm} we conclude
	$w_{m}(\xi)\leq m+n-1$. On the other hand by
	definition of $m$ we have  
	$w_{m}(\xi)\geq \widehat{w}_{n}(\xi)$. Combining we get the contradiction 
	\[
	\widehat{w}_{n}(\xi)\leq w_{m}(\xi) \leq m+n-1 < \widehat{w}_{n}(\xi).
	\]
	Hence indeed $m\geq \lceil\widehat{w}_{n}(\xi)\rceil-n+1$.
\end{proof}

\section{Proofs of the mixed properties}  \label{6}

We first prove the results of Section~\ref{mix} as the proofs
are a bit easier.
For simplicity and improved readability, we will omit
the argument $\xi$ in the exponents $w_{.}, \widehat{w}_{.}, \lambda_{.}, \widehat{\lambda}_{.}$ in all proofs. Moreover, it will be throughout
understood that $\epsilon_{i}$ derived from some initial $\epsilon>0$
are positive and tend to $0$ as $\epsilon$ does.

\subsection{Proof of Theorem~\ref{t2}}

Consider the combined graph of the linear form problem with
respect to $(\xi,\xi^{2},\ldots,\xi^{n})$. 
Let $\epsilon>0$.
By \eqref{eq:umrechnen2}, at
certain arbitrarily large $Q=e^{q}$ the first minimum satisfies
\[
|\psi_{n,1}^{\ast}(Q)-\frac{n-w_{n}}{n(1+w_{n})}|=
|\frac{L_{n,1}^{\ast}(q)}{q}-\frac{n-w_{n}}{n(1+w_{n})}|< \epsilon.
\]
Let 
\begin{equation} \label{eq:aal}
\alpha^{\ast}= \frac{n-w_{n}}{n(1+w_{n})}.
\end{equation}
We may assume that $q$ is a local minimum of $L_{n,1}^{\ast}$.
Let $s^{\ast}>0$ be the smallest positive number such that
$L_{n,1}^{\ast}(q+s^{\ast})=L_{n,2}^{\ast}(q+s^{\ast})$, so that $q+s^{\ast}$ is
the first meeting point of first and second minimum functions
to the right of $q$. Let $S^{\ast}=e^{s^{\ast}}$ and $Q^{\ast}=QS^{\ast}=e^{q+s^{\ast}}$. Then by \eqref{eq:umrechnen2}  
 we have
\begin{equation} \label{eq:rindver}
\psi_{n,2}^{\ast}(Q^{\ast})=\frac{L_{n,2}^{\ast}(q+s^{\ast})}{q+s^{\ast}} \leq 
\frac{n-\widehat{w}_{n}}{n(1+\widehat{w}_{n})}+\epsilon_{1}.
\end{equation}
Let
\[
\beta^{\ast}:= \frac{n-\widehat{w}_{n}}{n(1+\widehat{w}_{n})}.
\]
Since every local maximum of $L_{n,1}^{\ast}$ is a local minimum 
of $L_{n,2}^{\ast}$, the function $L_{n,1}^{\ast}$ increases with slope
$+1$ in the interval $[q,q+s^{\ast}]$. Thus
we have $L_{n,2}^{\ast}(q+s^{\ast})=L_{n,1}^{\ast}(q+s^{\ast})=L_{n,1}^{\ast}(q)+s^{\ast}$ and
we calculate
\[
\frac{s^{\ast}}{q}= \frac{L_{n,2}^{\ast}(q+s^{\ast})}{q} - \frac{L_{n,1}^{\ast}(q)}{q}\leq 
(\beta^{\ast}+\epsilon_{1})\frac{q+s^{\ast}}{q} - \alpha^{\ast} +\epsilon=
\beta^{\ast}-\alpha^{\ast}+ \beta^{\ast} \frac{s^{\ast}}{q}+\epsilon_{2}
\]
and solving for $s^{\ast}/q$ thus
\[
\frac{s^{\ast}}{q}
\leq \frac{\beta^{\ast}-\alpha^{\ast}}{1-\beta^{\ast}}+ \epsilon_{3}.
\]
Since $L_{n,2}^{\ast}$ has slope at least $-1/n$, with \eqref{eq:rindver} and
inserting for $\alpha^{\ast}$ and $\beta^{\ast}$ at once, we infer
\begin{align*}
\psi_{n,2}^{\ast}(Q)&=\frac{L_{n,2}^{\ast}(q)}{q}\leq \frac{1}{q}(L_{n,2}^{\ast}(q+s^{\ast})+\frac{s^{\ast}}{n})
= \frac{ q+s^{\ast} }{q} \frac{ L_{n,2}^{\ast}(q+s^{\ast}) }{q+s^{\ast}}
+ \frac{1}{n}\frac{s^{\ast}}{q}   \\
&\leq \left(1+\frac{\beta^{\ast}-\alpha^{\ast}}{1-\beta^{\ast}} \right)\beta^{\ast} + 
\frac{1}{n}\frac{ \beta^{\ast}-\alpha^{\ast} }{1-\beta^{\ast} } + \epsilon_{4}
\leq \frac{\widehat{w}_{n}(n-w_{n})+(n+1)(w_{n}-\widehat{w}_{n})}{n\widehat{w}_{n}(1+w_{n})} + \epsilon_{5}.
\end{align*}
For simplicity let
\begin{equation} \label{eq:gga}
\gamma^{\ast}= \frac{\widehat{w}_{n}(n-w_{n})+(n+1)(w_{n}-\widehat{w}_{n})}{n\widehat{w}_{n}(1+w_{n})}.
\end{equation}
Now we transition to dimension $k$.
Let  $\underline{x}_{1}, \underline{x}_{2}$ be the integer
points inducing $L_{n,1}^{\ast}(q), L_{n,2}^{\ast}(q)$
 according to \eqref{eq:defin} for our $q$ above, respectively.
We will implictily 
identify $\underline{x}_{j}=(x_{j,0},\ldots,x_{j,n})$ 
with polynomials $P_{j}(T)=x_{j,0}+ x_{j,1}T+ \cdots + x_{j,n}T^{n}$, 
for $j=1,2$. 
Say $d$ is the exact degree of $P_{1}$, where
$d\in\{1,2,\ldots,n\}$. Consider the set of
$k-n+2$ polynomials
\[
\mathscr{R}=\{ R_{1},\ldots,R_{k-n+2} \}
=\{ P_{1}, T^{n-d+1}P_{1},T^{n-d+2},\ldots,T^{k-d}P_{1},P_{2}\}.
\]
It consists of polynomials of degree at most $k$ and 
we readily check $\mathscr{R}$ is linearly independent.
Indeed $P_{1}, P_{2}$ are linearly independent and adding
one by one the remaining polynomials from
$T^{n-d+1}P_{1}$ up to $T^{k-d}P_{1}$ increases the dimension
in each step because the new polynomial has larger degree 
than any polynomial that occurred before.

Now, for $1\leq u\leq k-n+2$, the coefficient vector of $R_{u}$
can be interpreted as a vector $\underline{x}_{i}^{\prime}$ 
with some $i=i(u)$ as in \eqref{eq:ous2},
derived from putting $\underline{x}=\underline{x}_{1}$
if $1\leq u\leq k-n+1$ and $\underline{x}=\underline{x}_{2}$ 
if $u=k-n+2$. For simplicity denote $L_{k,R_{u}}^{\ast}(q)= L_{k,\underline{x}_{i}^{\prime}}^{\ast }(q)$ the functions 
in \eqref{eq:defin} upon this identification.  
Hence, with $\Phi_{k,n}$ from \eqref{eq:phink},
from Lemma~\ref{lemuren} 
we get that the first $k-n+1$ polynomials in $\mathscr{R}$
induce average slope 
$\Phi_{k,n}(\psi_{n,1}^{\ast}(Q))$ in $[0,q^{\prime}]$, and one more
induces average slope $\Phi_{k,n}(\psi_{n,2}^{\ast}(Q))$ in $[0,q^{\prime}]$, at some transformed
position $q^{\prime}=(n+1)k/(n(k+1))\cdot q$. Writing 
$Q^{\prime}=e^{q \prime}$, in other words we establish
\[
\psi_{k,k-n+1}^{\ast}(Q^{\prime})=\frac{L_{k,k-n+1}^{\ast}(q^{\prime})}{q^{\prime}}\leq \min_{1\leq u\leq k-n+1} \frac{L_{k,R_{u}}^{\ast}(q^{\prime})}{q^{\prime} }
\leq \Phi_{k,n}(\psi_{n,1}^{\ast}(Q))
\]
and
\[
 \psi_{k,k-n+2}^{\ast}(Q^{\prime})=\frac{L_{k,k-n+1}^{\ast}(q^{\prime})}{q^{\prime}}\leq  \frac{L_{k,R_{k-n+2}}^{\ast}(q^{\prime})}{q^{\prime} } \leq \Phi_{k,n}(\psi_{n,2}^{\ast}(Q)).
\]
Then $k+1-\vert \mathscr{R}\vert=k+1-(k-n+2)=n-1$ successive minima functions remain.
Thus from \eqref{eq:rrr} for the last function at $Q^{\prime}$ we derive 
\[
\psi_{k,k+1}^{\ast}(Q^{\prime})=\frac{L_{k,k+1}^{\ast}(q^{\prime})}{q^{\prime}}
\geq -\frac{(k-n+1)\Phi_{k,n}(\psi_{n,1}^{\ast}(Q))+\Phi_{k,n}(\psi_{n,2}^{\ast}(Q))}{n-1}-O(q^{\prime -1}), 
\]
thus 
\[
\psi_{k,k+1}^{\ast}(Q^{\prime})\geq -\frac{(k-n+1)\Phi_{k,n}(\alpha^{\ast})+\Phi_{k,n}(\gamma^{\ast})}{n-1}-\epsilon_{6}-O(q^{\prime -1}).
\]
From Mahler's duality \eqref{eq:mahler}, for $\psi_{k,1}(Q^{\prime})$
the average slope in the successive minima diagram of
the first successsive minimum in $[0,q^{\prime}]$ (with respect to $(\xi,\xi^{2},\ldots,\xi^{k})$) we obtain
\begin{equation} \label{eq:instead}
\psi_{k,1}(Q^{\prime})\leq -\psi_{k,k+1}^{\ast}(Q^{\prime})+O(q^{\prime -1})\leq 
\frac{(k-n+1)\Phi_{k,n}(\alpha^{\ast})+\Phi_{k,n}(\gamma^{\ast})}{n-1}+\epsilon_{6}+O(q^{\prime -1}).
\end{equation}

Let $\epsilon>0$. As $Q\to\infty$,
with \eqref{eq:umrechnen} for $N=k, j=1$ applied to our estimate \eqref{eq:instead}, and inserting for $\alpha^{\ast}, \gamma^{\ast}$
from \eqref{eq:aal}, \eqref{eq:gga},
after a lengthy computation
we get a lower bound of the form
\[
\lambda_{k}\geq \frac{w_{n}\widehat{w}_{n}-
w_{n}+(n-k)\widehat{w}_{n}}{(n-2)w_{n}\widehat{w}_{n}+w_{n}
+(k-1)\widehat{w}_{n}}-\epsilon_{7}.
\]
Since $\epsilon_7$ can be arbitrarily close to $0$, the desired bound
is obtained. The proof is complete.

The key point of the proof was to find a relatively large
set $\mathscr{R}$ of linearly independent 
polynomials with small evaluation at $\xi$. For this we made extensive use
of the fact that we work with successive powers of some $\xi$. The proofs of Theorems~~\ref{th3},~\ref{gleich} rely on the same principle.

\subsection{Proof of Theorem~\ref{th3}}

To improve our result upon condition \eqref{eq:assu}, in the proof
of Theorem~\ref{th3} below
the main step is to notice that in this case
we can extend the polynomial set $\mathscr{R}$ from the proof
of Theorem~\ref{t2}
and still guarantee that it remains linearly independent.

We verify \eqref{eq:tarda} upon our assumption \eqref{eq:assu} and $k\leq 2n-2$.
Let
$\alpha^{\ast},\beta^{\ast},\gamma^{\ast}$ as in the proof of Theorem~\ref{t2}.
Further take the same $q$ and derived $q^{\prime}$.
In place of \eqref{eq:instead}, we show the stronger estimate
\begin{equation} \label{eq:i}
\psi_{k,1}(Q^{\prime})\leq \frac{(k-n+1)\Phi_{k,n}(\alpha^{\ast})+(k-n+1)\Phi_{k,n}(\gamma^{\ast})}{2n-1-k}+\epsilon_{4}+O(q^{\prime -1}).
\end{equation}
Observe that the denominator is positive by assumption.
By our hypothesis \eqref{eq:assu} and Proposition~\ref{propper},
we may assume that the polynomial $P_{1}$ inducing
$\psi_{n,1}^{\ast}(Q)\leq \alpha^{\ast}+o(1)$ 
is irreducible and of degree exactly $n$.
In particular coprime to the polynomial $P_{2}$ inducing  
$\psi_{n,2}^{\ast}(Q)\leq \gamma^{\ast}+o(1)$.
We claim that then the set of polynomials
\[
\tilde{\mathscr{R}}=\{ P_{1},TP_{1},\ldots,T^{k-n}P_{1}, P_{2}, TP_{2},\ldots,T^{k-n}P_{2}\}
\]
consists
of polynomials of degree at most $k$, and
is linearly independent. 
Indeed, otherwise if some non-trivial linear combination
within $\tilde{\mathscr{R}}$ 
vanishes identically, we have a polynomial identity 
\[
P_1(T)U(T)=P_2(T)V(T)
\]
with $U,V$ integer polynomials, 
$U$ of degree at most $k-n$ and $V$ of degree
at most $k-n\leq n-2<n$.
Thus $P_1$ has to divide
either $P_2$ or $V$. Clearly it cannot divide $V$ as $P_1$ has larger degree. 
However, it cannot divide $P_2$ either since $P_1$ is irreducible 
of degree $n$ and
$P_2$ has degree at most $n$ and is not a scalar multiple
of $P_1$.
We obtain a contradiction and our claim is proved.

From the above argument, 
in the $k$-dimensional combined graph,
with the same position $q^{\prime}$ as
in the proof of Theorem~\ref{t2},
we now have $k-n+1$ polynomials inducing average slope essentially
at most
$\Phi_{k,n}(\alpha^{\ast})$ in $[0,q^{\prime}]$, and further $k-n+1$ polynomials inducing average
slope essentially at most $\Phi_{k,n}(\gamma^{\ast})$ in $[0,q^{\prime}]$.
Thus
\[
\psi_{k,k-n+1}^{\ast}(Q^{\prime})\leq \Phi_{k,n}(\alpha^{\ast}),
\qquad \psi_{k,2(k-n+1)}^{\ast}(Q^{\prime})
\leq \Phi_{k,n}(\gamma^{\ast}).
\]
Then $k+1-\vert \tilde{\mathscr{R}}\vert=k+1-2(k-n+1)=2n-1-k\geq 1$
polynomials corresponding to successive minima remain.
Using Mahler's duality as in the proof of Theorem~\ref{t2},
this obviously implies \eqref{eq:i} as the sum of
$\psi_{k,j}(Q^{\prime})$ over $j=1,2,\ldots,k+1$ is $O(q^{\prime -1})$.
The rest of the proof is done analogously to Theorem~\ref{t2}, 
we skip the details and computation.

\begin{remark}
Considering $k=2n-1$, a similar argument 
implies $\Phi_{2n-1,n}(\alpha^{\ast})+\Phi_{2n-1,n}(\gamma^{\ast})\geq 0$,
for $\alpha^{\ast},\gamma^{\ast}$ in \eqref{eq:aal}, \eqref{eq:gga}, 
upon condition \eqref{eq:assu}. This turns
out to be equivalent to \eqref{eq:bush}, again upon the same hypothesis. 
Thereby we have found a new proof
of this fact that relies only on Minkowski's Second Convex
Body Theorem.
\end{remark} 

 \subsection{Proof of Theorem~\ref{gleich}}  \label{glsec}

Gelfond's Lemma states that for polynomials $P,R$ of degree 
at most $N$
we have $H(PR) \asymp_{N} H(P)H(R)$. In particular for any integer $N$
there is some absolute $c(N)>0$ so 
that
\begin{equation}  \label{eq:gelfond}
H(PR) > c(N) \cdot H(P)H(R) \geq c(N)H(P)
\end{equation}
holds for all non-zero polynomials $P,R$ of degree at most $N$.
Using this property, the proof is similar to that of Theorem~\ref{th3} again.

     So let us prove Theorem~\ref{gleich} now.
	As recalled in Proposition~\ref{propper},
	inequality \eqref{eq:infoft}
	has solutions in irreducible integer polynomials $P$ of degree $\leq n$ and arbitrarily large height. 
	Let $P_{1}$ be such a polynomial. Let $c(n)$ as
	in \eqref{eq:gelfond} and put $M=(c(n)/2)\cdot H(P_{1})$. 
	By definition of $\widehat{w}_{n}(\xi)$
	there is an integer polynomial $P_{2}$ of degree at most $n$ with 
	\[
	H(P_{2}) \leq M, \qquad |P_{2}(\xi)|\leq M^{-\widehat{w}_{n}(\xi)+\epsilon/2}.
	\]
	By construction $P_{2}$ is not a multiple of $P_{1}$, 
	thus coprime with $P_{1}$.
	Hence we have found coprime $P_{1}, P_{2}$ with
	\begin{equation}  \label{eq:neuneu}
	\max_{i=1,2} H(P_{i}) \leq M, \qquad 
	\max_{i=1,2}  |P_{i}(\xi)|< M^{- \widehat{w}_{n}(\xi)+\epsilon}.
	\end{equation}
    Identitfy $P_{1}$ as above with its coefficient vector $\underline{x}\in\mathbb{Z}^{n+1}$
	and write $L_{n,P_{1}}^{\ast}(q)=L_{n,\underline{x}}^{\ast}(q)$ for the
	induced function from \eqref{eq:defin}, and similarly for $P_{2}$.
	Now by \eqref{eq:umrechnen2} with $N=n$, 
	estimates \eqref{eq:neuneu} induce parameters $Q=e^{q}$ with
	\begin{equation}  \label{eq:byeqab}
	\psi_{n,2}^{\ast}(Q) \leq  \max_{i=1,2} \frac{L_{n,P_{i} }^{\ast}(q)}{q} \leq  \frac{n+1}{n} \frac{1}{1+\widehat{w}_{n}(\xi)}- \frac{1}{n}+\epsilon_{1} = \frac{n-\widehat{w}_{n}(\xi)}{n(1+\widehat{w}_{n}(\xi))}+\epsilon_{1}.
	\end{equation}

	Let $d$ be the degree of $P_{1}$. Next we claim that
	\[
	\mathscr{R}:= \{ P_{1}(T), TP_{1}(T), \ldots, T^{k-d}P_{1}(T), P_{2}(T), TP_{2}(T), \ldots, T^{ \min\{ d-1, k-n\}} P_{2}(T)  \}
	\] 
	is a linearly independent set of integer polynomials of degree at 
	most $k$. Since $d\leq n$ and $\deg P_{2}\leq n$ as well, 
	only the linear independence needs to be checked.
	Indeed, otherwise there would again be a polynomial identity
	$P_{1}(T)U(T)= P_{2}(T)V(T)$
	with integer polynomials $U,V$ of degrees at most $k-d$ and 
	$d-1$ respectively, and a very similar argument as in the proof
	of Theorem~\ref{th3} shows this is impossible.
	This proves the claim. 
	
	Since all polynomials in $\mathscr{R}$ also have height $\leq M$
	and evaluation at $\xi$ of absolute value smaller than $M^{- \widehat{w}_{n}(\xi)+\epsilon}$ if we assume $\xi\in(0,1)$, we have found
	\[
	h:= |\mathscr{R}|=(k-d+1) + (\min\{ d-1, k-n\}+1) = \min\{ k+1, 2k+2-d-n\}
	\]
	linearly independent integer
	polynomials $R_{1}, \ldots, R_{h}$ of degree at most $k$ and with 
	\[
	\max_{1\leq i\leq h} H(R_{i})\leq M, \qquad 
	\max_{1\leq i\leq h} |R_{i}(\xi)| 
	< M^{- \widehat{w}_{n}(\xi)+\epsilon }.
	\]
	Since $d\leq n$ and $k\leq 2n-2<2n-1$ we have $h\geq 2(k-n+1)$.
	Again we identify $R_{i}$ with its coefficient vector $\underline{x}_{i}\in\mathbb{Z}^{k+1}$ and
	write $L_{k,R_{i}}^{\ast}=L_{k,\underline{x}_{i}}$. Then for the 
	induced functions, Lemma~\ref{lemuren} and \eqref{eq:byeqab} 
	%
	%
	gives rise to positions $Q^{\prime}=e^{q \prime}$ with
	\begin{align*}
	\psi_{k,h}^{\ast}(Q^{\prime}) &\leq \max_{1\leq i\leq h}  \frac{L_{k,R_{i}}^{\ast}(q^{\prime})}{q^{\prime}}
	\leq \Phi_{k,n}\left( \max_{i=1,2} \frac{L_{n,P_{i} }^{\ast}(q)}{q}\right) \\ &\leq  \Phi_{k,n}\left(\frac{n-\widehat{w}_{n}(\xi)}{n(1+\widehat{w}_{n}(\xi))}\right)+\epsilon_{2}
	= \frac{n(k+1)}{(n+1)k} \frac{n-\widehat{w}_{n}(\xi)}{n(1+\widehat{w}_{n}(\xi))}+ \frac{k-n}{k(n+1)}+\epsilon_{2}.
	\end{align*} 
	Since there are arbitrarily large such $Q^{\prime}$ and $\epsilon$ can be taken arbitrarily small
	\begin{equation}  \label{eq:froma}
	\underline{\psi}_{k,h}^{\ast} \leq \frac{n(k+1)}{(n+1)k} \frac{n-\widehat{w}_{n}(\xi)}{n(1+\widehat{w}_{n}(\xi))}+ \frac{k-n}{k(n+1)}.
	\end{equation} 
	Using \eqref{eq:jaja} and \eqref{eq:thatp} we can estimate
	\begin{equation} \label{eq:letztm}
	\underline{\psi}_{k,1} = -\overline{\psi}_{k,k+1}^{\ast} 
	\leq \frac{h}{k+1-h} \underline{\psi}_{k,h}^{\ast}, \qquad \text{if}\; k\leq 2n-2. 
	\end{equation}
	The condition on $k$ ensures $k+1-h>0$.
	Inserting the bound for $\underline{\psi}_{k,h}^{\ast}$
	from \eqref{eq:froma} and the worst case $h=2(k-n+1)$
    in \eqref{eq:letztm} and applying \eqref{eq:umrechnen}, we get \eqref{eq:jjj} after some calculation.
		
	For \eqref{eq:AA1}, we notice that if the degree of $P_{1}$ above is $d=n$ then
	we can proceed as in the proof of Theorem~\ref{th3} to get 
	its bound \eqref{eq:tarda}.
	Otherwise $d\leq n-1$ and thus now $h\geq 2k-2n+3$, and as soon
	as $k\leq 2n-3$ we can proceed as above to
	obtain the other bound when using $h=2k-2n+3$ in \eqref{eq:letztm}.

\section{Proof of the going-up Theorem~\ref{gutersatze0}} \label{7}

Similar ideas as for the mixed inequalities
are used to prove the estimates that contain
only simultaneous approximation exponents $\lambda_{N}(\xi)$. However,
roughly speaking, one more step of duality considerations between
simultaneous and linear form approximation is required here. 
We apply the same notational simplifications as in Section~\ref{6}.

\subsection{Proof of \eqref{eq:folgerunge0}}
	Let $\xi$ be a real number. It
	follows from \eqref{eq:umrechnen} that for any $\epsilon>0$
	there exist arbitrarily large parameters $Q$ such that
	\begin{equation} \label{eq:dort0}
	|\psi_{n,1}(Q)- \frac{1-n\lambda_{n}}{n(1+\lambda_{n})}|< \epsilon.
	\end{equation}
	Consider such large $Q$ fixed and let $q=\log Q$. 
	When we transition to the linear form problem,
	together with \eqref{eq:schranke} 
	we infer
	\begin{equation} \label{eq:itz0}
	\psi_{n,1}^{\ast}(Q)+\cdots+\psi_{n,n}^{\ast}(Q)\leq \frac{1-n\lambda_{n}}{n(1+\lambda_{n})}+\epsilon+O(q^{-1}).
	\end{equation}
	We also want to bound $\psi_{n,1}^{\ast}(Q)$ from above.
	We could estimate it by the
	right hand side of \eqref{eq:itz0} divided by $n$, 
	which would turn out to reprove
	Theorem~\ref{juppy},
	 but using the uniform
	exponent we find a better bound.
	
	From \eqref{eq:dort0} we
	obtain points $(q,L_{n,1}(q))$ with arbitrarily large
	$q$ and the property 
	\begin{equation}  \label{eq:yy}
	(-\alpha-\epsilon) q\leq L_{n,1}(q)\leq (-\alpha+\epsilon) q,
	\end{equation}
	where we have put
	\begin{equation} \label{eq:alpha0}
	-\alpha=\frac{1-n\lambda_{n}}{n(1+\lambda_{n})}.
	\end{equation}
	for simplicity. We can assume that $L_{n,1}$ has a local minimum
	at $q$. Then in some interval $[q-s,q]$ the function $L_{n,1}$
	decays with slope $-1$. The switch point $q-s$, where
	$L_{n,1}$ changes slope from $1/n$ to $-1$, is where it meets
	the second minimum function $L_{n,2}$. At $q-s$, again
	from \eqref{eq:umrechnen} we obtain
	\[
	L_{n,1}(q-s)=L_{n,2}(q-s)\leq \frac{1-n\widehat{\lambda}_{n}}{n(1+\widehat{\lambda}_{n})}(q-s)+ \epsilon_{1}q.
	\]
	%
	Again let
	\[
	-\beta:=
	\frac{1-n\widehat{\lambda}_{n}}{n(1+\widehat{\lambda}_{n})}.
	\]
	Since $L_{n,1}$ decays with slope
	$-1$ in $[q-s,q]$, on the other hand by \eqref{eq:yy} we have
	\[
	L_{n,1}(q-s)= L_{n,1}(q)+s=(-\alpha+\delta) q+s,
	\]
	where $\delta\in (-\epsilon,\epsilon)$ is of small modulus.
	Equating the two expressions for $L_{n,1}(q-s)$, after some calculation 
	we get
	\[
	0<s\leq 
	q\cdot \frac{\lambda_{n}-\widehat{\lambda}_{n}}{1+\lambda_{n}}+ \epsilon_{2}q.
	\]
	As the second successive
	minimum has slope at most $1/n$ in
	$[q-s,q]$, inserting for $s$, at position $q$ we get
	\[
	L_{n,2}(q)\leq L_{n,2}(q-s)+\frac{1}{n}s\leq \frac{\lambda_{n}-(n+1)\widehat{\lambda}_{n}+1}{n(1+\lambda_{n})} q+\epsilon_{3}q.
	\]
	Let
	\begin{equation} \label{eq:gamma0} 
	-\gamma
	:= \frac{\lambda_{n}-(n+1)\widehat{\lambda}_{n}+1}{n(1+\lambda_{n})}.
	\end{equation}

	Now again consider the dual linear form problem
	with respect to $(\xi,\xi^{2},\ldots,\xi^{n})$.
	Recall the notation $q=\log Q$ and
	$\psi_{n,j}^{\ast}(Q)=L_{n,j}^{\ast}(q)/q$.
	By Mahler's duality \eqref{eq:mahler}, for the last two successive minima 
	at position $q$ we obtain 
	\begin{equation}  \label{eq:p1}
	\psi_{n,n+1}^{\ast}(Q)=\frac{L_{n,n+1}^{\ast}(q)}{q}\geq -\frac{L_{n,1}(q)}{q}-O(q^{-1})\geq
	(\alpha-\epsilon) -O(q^{-1})
	\end{equation}
	and
	\begin{equation} \label{eq:p2}
	\psi_{n,n}^{\ast}(Q)=\frac{L_{n,n}^{\ast}(q)}{q}\geq -\frac{L_{n,2}(q)}{q}-O(q^{-1})=(\gamma-\epsilon_{3})-O(q^{-1}).
	\end{equation}
	Since the sum of all $n+1$ successive minima 
	functions $\psi_{n,j}^{\ast}$ at $Q$ is $O(q^{-1})$
	by \eqref{eq:rrr}, we have that
	\[
	\sum_{j=1}^{n-1} \psi_{n,j}^{\ast}(Q)\leq -\psi_{n,n}^{\ast}(Q)-\psi_{n,n+1}^{\ast}(Q)+O(q^{-1})\leq -\alpha-\gamma+\epsilon+\epsilon_{3}+O(q^{-1}).
	\]
	In particular
	\begin{equation} \label{eq:oi0}
	\psi_{n,1}^{\ast}(Q)\leq \frac{\sum_{j=1}^{n-1} \psi_{n,j}^{\ast}(Q)}{n-1}\leq 
	\frac{-\alpha-\gamma}{n-1}+\epsilon_{4}+O(q^{-1}).
	\end{equation}
	This is the desired bound for $\psi_{n,1}^{\ast}(Q)$.
	
	Now we transition to dimension $k\geq n$.
	Each of the pairs $(Q,\psi_{n,j}^{\ast}(Q))$ are induced
	by $L_{n,\underline{x}_{j}}^{\ast}$ as defined
	in \eqref{eq:defin} for some
	$\underline{x}_{j}=(x_{j,0},\ldots,x_{j,n})$. We identify
	each $\underline{x}_{j}$
	with the
	polynomial $P_{j}(T)=x_{j,0}+x_{j,1}T+\cdots+x_{j,n}T^{n}$ again. 
	Moreover $\mathscr{P}=\{P_{1},\ldots,P_{n}\}$ are linearly independent. Let $d$ be the degree of $P_{1}$, that
	is the largest index with $x_{1,d}\neq 0$. Clearly $1\leq d\leq n$.
	Starting from these polynomials we
	derive the ordered set of $k$ polynomials
	\[
	\mathscr{R}=\{R_{1},\ldots,R_{k}\}= \{ P_{1},T^{l-d+1}P_{1},\ldots,T^{k-d}P_{1},P_{2},P_{3},\ldots,P_{n}\}.
	\]
	Any $R_{i}$ has degree at most $k$. Furthermore
	it is easy to check that the linear independence of
	$\mathscr{P}$ implies that $\mathscr{R}$ is
	linearly independent as well, since starting with
	$\mathscr{P}$ and adding 
	one by one the 
	new polynomials in
	$\mathscr{R}\setminus \mathscr{P}=\{ T^{n-d+1}P_{1},\ldots,T^{k-d}P_{1}\}$,
	the dimension of the span increases in each step because
	the new polynomial has strictly larger degree than all the
	previous polynomials, and thus does not lie in their span.

	For simplicity now assume the typical case $d=n$, 
	otherwise the correspondence to Lemma~\ref{lemuren} 
	in following argument has to be slightly
	altered, and the remainder of the proof remains unaffected anyway.
	Then the first $k-n$ polynomials $R_{1},\ldots,R_{k-n}$ correspond to
	vectors $\underline{x}^{\prime}_{i}$ in \eqref{eq:ous2} 
	for $1\leq i\leq k-n$ in
	Lemma~\ref{lemuren} for $\underline{x}=\underline{x}_{1}$ 
	the coefficient vector of $P_{1}$, and similarly
	$R_{k-n+j}$ to $\underline{x}^{\prime}$ in \eqref{eq:ous} 
	for $\underline{x}=\underline{x}_{j}$
	the coefficient vector of $P_{j}$ as defined above, 
	for $1\leq j\leq n$ (so $\underline{x}_{1}$ appears $k-n+1$ 
	times in total).
	We may assume $\xi\in (0,1)$
	and apply Lemma~\ref{lemuren} to each $R_{i}$.
	With $Q^{\prime}=e^{q^{\prime}}$ for $q^{\prime}$ in
	\eqref{eq:os}, from the linear independence 
	of $\mathscr{R}$ we obtain
	\begin{align*}
	\psi_{k,j}^{\ast}(Q^{\prime})&\leq \Phi_{k,n}(\psi_{n,1}^{\ast}(Q))=
	(\psi_{n,1}^{\ast}(Q)-1)\frac{k+1}{k}\frac{n}{n+1}+1, 
	\qquad 1\leq j\leq k-n,   \\
	\psi_{k,k-n+j}^{\ast}(Q^{\prime})&\leq \Phi_{n,k}(\psi_{n,j}^{\ast}(Q))=  
	(\psi_{n,j}^{\ast}(Q)-1)\frac{k+1}{k}\frac{n}{n+1}+1, 
	\qquad 1\leq j\leq n.
	\end{align*}
	Summing over $j=1,2,\ldots,k$ we infer
	\[
	\sum_{j=1}^{k} \psi_{k,j}^{\ast}(Q^{\prime})\leq (k-n)A+B,
	\]
	where
	\[
	A=(\psi_{n,1}^{\ast}(Q)-1)\frac{(k+1)n}{k(n+1)}+1, \quad B=\sum_{j=1}^{n} \left[(\psi_{n,j}^{\ast}(Q)-1)\frac{(k+1)n}{k(n+1)}+1\right].
	\]
	This can be equivalently written
	\[
	\sum_{j=1}^{k} \psi_{k,j}^{\ast}(Q^{\prime})\leq 
	(k-n)\frac{(k+1)n}{k(n+1)}\psi_{n,1}^{\ast}(Q) 
	+ \frac{(k+1)n}{k(n+1)} \sum_{j=1}^{n} \psi_{n,j}^{\ast}(Q)
	 + \frac{k-n}{n+1}.
	\]
	Now we use the estimates \eqref{eq:itz0} and \eqref{eq:oi0} and
	inserting for $\alpha, \gamma$ from \eqref{eq:alpha0}, \eqref{eq:gamma0}
	after some calculation we end up at
	\[
	\sum_{j=1}^{k} \psi_{k,j}^{\ast}(Q^{\prime})
	\leq \frac{k(1-n)\lambda_{n}+(kn+n-k^{2}-k)
		\widehat{\lambda}_{n}+k^2-kn+k-1}{k(n-1)(\lambda_{n}+1)}
	+\epsilon_{5}.
	\]
	Together with \eqref{eq:schranke} this implies
	for large $Q$ we derive the estimate
	\[
	\psi_{k,1}(Q^{\prime})\leq \frac{k(1-n)\lambda_{n}+(kn+n-k^{2}-k)
		\widehat{\lambda}_{n}+k^2-kn+k-1}{k(n-1)(\lambda_{n}+1)}
	+\epsilon_{6}.
	\]
	Since there are arbitrarily large $Q$ and thus induced $Q^{\prime}$
	with this property, we derive
	\[
	\underline{\psi}_{k,1}\leq \frac{k(1-n)\lambda_{n}+(kn+n-k^{2}-k)
		\widehat{\lambda}_{n}+k^2-kn+k-1}{k(n-1)(\lambda_{n}+1)}+\epsilon_{6}.
	\]
	Inserting in \eqref{eq:umrechnen} we derive the desired
	estimate \eqref{eq:folgerunge0} after some calculation 
	and $\epsilon\to 0$. 

\begin{remark}  \label{rehhi}
	From \eqref{eq:oi0} when inserting for $\alpha, \gamma$
	in \eqref{eq:alpha0}, \eqref{eq:gamma0}
	and applying \eqref{eq:umrechnen2} we get a new proof of the inequality
	\[
	w_{k}(\xi) \geq \frac{(k-1)\lambda_{k}(\xi)+\widehat{\lambda}_{k}(\xi)+k-2}{1-\widehat{\lambda}_{k}(\xi)}, \qquad\qquad k\geq 2,
	\]
	already obtained by Bugeaud, Laurent~\cite{bl2010}, and
	with a different proof by Schmidt and Summerer~\cite{ssch}. 
	Again our proof of this estimate, as in~\cite{bl2010} and~\cite{ssch}, extends to the general case of $\mathbb{Q}$-linearly independent vectors $\{1, \xi_{1},\ldots,\xi_{k} \}$.
\end{remark}

The proof of \eqref{eq:wanndenn} is very similar, with a
slightly different strategy for estimation.

\subsection{Proof of \eqref{eq:wanndenn}}
	We proceed precisely as in the proof of \eqref{eq:folgerunge0} 
	up to \eqref{eq:oi0}.
	From \eqref{eq:p1}, \eqref{eq:p2} and
	since the sum of all $n+1$ successive minima 
	functions $\psi_{n,j}^{\ast}$ at $Q$ is $O(q^{-1})$
	by \eqref{eq:rrr}, we have that
	\begin{equation} \label{eq:uuu}
	\sum_{j=1}^{n-1} \psi_{n,j}^{\ast}(Q)\leq -\psi_{n,n}^{\ast}(Q)-\psi_{n,n+1}^{\ast}(Q)+O(q^{-1})\leq -\alpha-\gamma+\epsilon_{7}+O(q^{-1}).
	\end{equation}
	We will use this in place of \eqref{eq:itz0},
	and combine it again with \eqref{eq:oi0}.
	%
	%

	Now we transition to dimension $k\geq n$. 
	Let $\underline{x}_{1},\ldots,
	\underline{x}_{n-1}$ be the linearly independent integer vectors inducing
	$L_{n,j}^{\ast}(q)$
	(or equivalently $\psi_{n,j}^{\ast}(Q)$) for $1\leq j\leq n-1$
	as above. They correspond to polynomials
	$P_{j}(T)=x_{j,0}+x_{j,1}T+\cdots+x_{j,n}T^{n}$.
	Let 
	\[
	\mathscr{P}= \{ P_{1},\ldots,P_{n-1}\}.
	\]
	Let $d$ be the degree of $P_{1}$, that
	is the largest index with $x_{1,d}\neq 0$. 
	Clearly $1\leq d\leq n$. 
	Starting from these polynomials we
	derive the ordered set of $k-1$ polynomials
	\[
	\mathscr{R}=\{R_{1},\ldots,R_{k-1}\}= \{ P_{1},T^{n-d+1}P_{1},
	T^{n-d+2}P_{1}(T),\ldots,T^{k-d}P_{1},P_{2},P_{3},\ldots,P_{n-1}\}.
	\]
	Any $R_{i}$ has degree at most $k$. Furthermore
	it is easy to check that the linear independence of
	$\mathscr{P}$ implies that $\mathscr{R}$ is
	linearly independent as well. Indeed, starting with
	$\mathscr{P}$ and adding 
	one by one the 
	new polynomials in
	$\mathscr{R}\setminus \mathscr{P}=\{ T^{n-d+1}P_{1},\ldots,T^{k-d}P_{1}\}$,
	the dimension of the span increases in each step because
	the new polynomial has strictly larger degree than all the
	previous polynomials, and thus does not lie in their span.
	
	The polynomials $P_{j}\in\mathscr{P}$ give rise to
	points $\underline{x}^{\prime}_{1},\ldots,\underline{x}^{\prime}_{n-1}$ 
	as in \eqref{eq:ous} via embedding them into $\mathbb{Z}^{k+1}$.
	Write 
	$\psi_{k,P_{j}}^{\ast}(Q^{\prime})=
	\psi_{k,\underline{x}_{j}^{\prime}}^{\ast}(Q^{\prime})=
	L_{k,\underline{x}_{j}^{\prime}}^{\ast}(q^{\prime})/q^{\prime}$  
	with the functions $L_{k,\underline{x}_{j}^{\prime}}^{\ast}$ 
	as in \eqref{eq:defin} for the polynomial $P_{j}$ above, 
    and the corresponding notation for other polynomials.	
	With $\Phi_{k,n}$
	as in \eqref{eq:phink}, from Lemma~\ref{lemuren} 
	and \eqref{eq:uuu} we get some point $Q^{\prime}=e^{q^{\prime}}$
	where we have
	\begin{align*}
	\sum_{P\in \mathscr{P} } \psi_{k,P}^{\ast}(Q^{\prime})= \sum_{j=1}^{n-1}
	\psi_{k,P_{j}}^{\ast}(Q^{\prime})&=
	\sum_{j=1}^{n-1} \psi_{k,\underline{x}^{\prime}_{j}}^{\ast}(Q^{\prime})\\
	&= \sum_{j=1}^{n-1} \Phi_{k,n}(\psi_{n,j}^{\ast}(Q))\\
	&=
	\Phi_{k,n}(\sum_{j=1}^{n-1} \psi_{n,j}^{\ast}(Q))+(n-2)\frac{k-n}{k(n+1)} \\
	&\leq \Phi_{k,n}( -\alpha-\gamma)+\epsilon_{8}+(n-2)\frac{k-n}{k(n+1)}+O(q^{-1}).
	\end{align*}
	Hereby we used the fact that $\Phi_{k,n}$ are
	 affine functions with constant term $(k-n)/(k(n+1))$.
	Assume without loss of generality $\xi\in(0,1)$.
	Then, again by Lemma~\ref{lemuren}, 
	for the remaining $(k-1)-(n-1)=k-n$ polynomials 
	in $\mathscr{R}\setminus \mathscr{P}$ we obtain
	\[
	\sum_{R\in \mathscr{R}\setminus \mathscr{P}}
	\psi_{k,R}^{\ast}(Q^{\prime})\leq 
	(k-n) \Phi_{k,n}(\psi_{n,1}^{\ast}(Q)).
	\]
	The entire sum over $\mathscr{R}= \mathscr{P}\cup (\mathscr{R}\setminus \mathscr{P})$ is the sum of both 
	left hand sides above, thus by \eqref{eq:oi0} we infer
	\begin{align*}
	\sum_{R\in \mathscr{R}}\psi_{k,R}^{\ast}(Q^{\prime})&\leq  \Phi_{k,n}( -\alpha-\gamma)+(k-n) \Phi_{k,n}(\psi_{n,1}^{\ast}(Q))+(n-2)\frac{k-n}{k(n+1)}+\epsilon_{9}+O(q^{\prime -1})\\
	&\leq \Phi_{k,n}( -\alpha-\gamma)+(k-n) \Phi_{k,n}(-\frac{\alpha+\gamma}{n-1})+(n-2)\frac{k-n}{k(n+1)}+\epsilon_{10}+O(q^{\prime -1}).
	\end{align*}
	As $\mathscr{R}$ is a linearly 
	independent set of cardinality $k-1$ here, we may write this as
	\[
	\sum_{j=1}^{k-1} \psi_{k,j}^{\ast}(Q^{\prime})\leq 
	\sum_{R\in \mathscr{R}}\psi_{k,R}^{\ast}(Q^{\prime})\leq 
	\tau+\epsilon_{10}+O(q^{-1}), 
	\]
	where inserting in $\Phi_{k,n}$ we calculate
	\begin{align*}
	\tau:&=\Phi_{k,n}( -\alpha-\gamma)+(k-n) \Phi_{k,n}(-\frac{\alpha+\gamma}{n-1})+(n-2)\frac{k-n}{k(n+1)}\\
	&=-(\alpha+\gamma)\frac{(k+1)(k-1)n}{(n+1)(n-1)k}+ \frac{(k-1)(k-n)}{k(n+1)}.
	\end{align*}
	Now for $\psi_{k,k+1}^{\ast}(Q^{\prime})$, which is the average
	slope of the last minimum function $L_{k,k+1}^{\ast}$ in
	$[0,q^{\prime}]$, by \eqref{eq:rrr}
	we obtain
	\[
	\psi_{k,k+1}^{\ast}(Q^{\prime})\geq 
	- \frac{\sum_{j=1}^{k-1} \psi_{k,j}^{\ast}(Q^{\prime})}{2}-O(q^{-1})\geq
	-\frac{\tau}{2}-\epsilon_{11}-O(q^{-1}).
	\]
	Again by Mahler's duality \eqref{eq:mahler}, for the first minimum
	of the simultaneous approximation problem in dimension
	$k$ at $q$ we get
	\begin{equation} \label{eq:gl}
	\psi_{k,1}(Q^{\prime})\leq -\psi_{k,k+1}^{\ast}(Q^{\prime})
	+ O(q^{\prime -1})\leq \frac{\tau}{2}+\epsilon_{11}+O(q^{\prime -1}).
	\end{equation}
	Using
	\begin{equation} \label{eq:ssb}
	\lambda_{k}(\xi)\geq \limsup_{Q^{\prime}\to\infty} \frac{1-k\psi_{k,1}(Q^{\prime})}{k+k\psi_{k,1}(Q^{\prime})}
	\end{equation}
	from \eqref{eq:umrechnen} again, 
	from \eqref{eq:gl} we get a lower bound for
	$\lambda_{k}(\xi)$ in terms of $\tau$, which in turn
	depends only on $\alpha,\gamma$.
	Inserting for $\alpha,\gamma$ from \eqref{eq:alpha0}, \eqref{eq:gamma0} for large enough $q\geq q_{0}(\epsilon)$ 
	as above,
	after a tidious calculation and rearrangement,
	we end up at
	\begin{equation}  \label{eq:ooo}
	\lambda_{k}\geq \frac{(n-1)\lambda_{n}+
		(k-1)\widehat{\lambda}_{n}+n-k}{(n-1)\lambda_{n}-(k-1)\widehat{\lambda}_{n}+n+k-2}-\epsilon_{12}.
	\end{equation}
	%
	%
	Since we can choose $\epsilon$ arbitrarily small,
	the bound becomes as in the theorem.

\section{Deduction of the results from Section~\ref{qli}} \label{pqli}

Let $N\geq 1$ be an integer. 
Assume $\{1,\zeta_{1},\ldots,\zeta_{N}\}$ is linearly independent
over $\mathbb{Q}$ and write $\underline{\zeta}=(\zeta_{1},\ldots,\zeta_{N})$.
Khintchine's transference principle~\cite{kredi} states
\begin{equation} \label{eq:khintchine}
\frac{w_{N}(\underline{\zeta})}{(N-1)w_{N}(\underline{\zeta})+N}\leq 
\lambda_{N}(\underline{\zeta})\leq \frac{w_{N}(\underline{\zeta})-N+1}{N}.
\end{equation}
We recall the refinements in terms of introducing uniform exponents 
\begin{equation} \label{eq:toroben}
\lambda_{N}(\underline{\zeta}) \geq \frac{(\widehat{w}_{N}(\underline{\zeta})-1)w_{N}(\underline{\zeta})}{((N-2)\widehat{w}_{N}(\underline{\zeta})+1)w_{N}(\underline{\zeta})+(N-1)\widehat{w}_{N}(\underline{\zeta})}
\end{equation}
and
\begin{equation} \label{eq:tur}
w_{N}(\underline{\zeta}) \geq \frac{(N-1)\lambda_{N}(\underline{\xi})+\widehat{\lambda}_{N}(\underline{\zeta})+N-2}{1-\widehat{\lambda}_{N}(\underline{\zeta})},
\end{equation}
already
quoted below Theorem~\ref{t2} and Remark~\ref{rehhi}, respectively.
Considering only uniform exponents, German~\cite{ogerman} showed
\begin{equation} \label{eq:ogerman}
\frac{\widehat{w}_{N}(\underline{\zeta})-1}{(N-1)\widehat{w}_{N}(\underline{\zeta})}\leq
\widehat{\lambda}_{N}(\underline{\zeta})\leq \frac{\widehat{w}_{N}(\underline{\zeta})-N+1}{\widehat{w}_{N}(\underline{\zeta})}.
\end{equation}
Estimates \eqref{eq:khintchine}, \eqref{eq:ogerman}
are best possible, and \eqref{eq:toroben}, \eqref{eq:tur} at least
for $N=2$ as well~\cite{bl2010},\cite{laurent}. In the Remark on page 80 in~\cite{ss}, a short proof
of \eqref{eq:khintchine} that only uses parametric
geometry of numbers is given. It resembles our proofs from Sections~\ref{6},~\ref{7} that implicitly
recover the refined estimates \eqref{eq:toroben}, \eqref{eq:tur}. 
For an alternative proof of \eqref{eq:ogerman}
and its optimality based on parametric geometry of numbers, see~\cite{schms}.
It is worth noting that \eqref{eq:ogerman} is stronger than the analogue of \eqref{eq:khintchine}
obtained from replacing ordinary by uniform exponents.
In the proofs, we apply above estimates to finite dimensional 
projections of $\underline{\xi}\in\mathbb{R}^{\mathbb{N} }$.

\begin{proof}[Proof of Theorem~\ref{allgemein}]
	We start
	with the inequality \eqref{eq:tur} for $N=n$ and
	$\underline{\zeta}=\underline{\xi}_{n}$.
	On the other hand it is easy to see that for any 
	$\underline{\xi}\in \mathbb{R}^{\mathbb{N} }$ and $k\geq n$
	\begin{equation} \label{eq:nruno}
	w_{k}(\underline{\xi}_{k})\geq w_{n}(\underline{\xi}_{n}),
	\end{equation}
	since for any vector $\underline{x}=(x_{0},\ldots,x_{n})$ as in the definition
	of $w_{n}$ taking $\underline{x}^{\prime}=(x_{0},\ldots,x_{n},0,\ldots,0)\in\mathbb{Z}^{k+1}$ yields $\Vert\underline{x}\Vert_{\infty}=\Vert\underline{x}^{\prime}\Vert_{\infty}$ and $\scp{\underline{x},\underline{\xi}_{n}}_{n}=\scp{\underline{x}^{\prime},\underline{\xi}_{k} }_{k}$.
	Combining yields
	\begin{equation} \label{eq:letzt}
	w_{k}(\underline{\xi}_{k})\geq \frac{(n-1)\lambda_{n}(\underline{\xi}_{n})+\widehat{\lambda}_{n}(\underline{\xi}_{n})+n-2}{1-\widehat{\lambda}_{n}(\underline{\xi}_{n})}=B,
	\end{equation}
	with $B$ as defined in the theorem.
	Now apply the left inequality from \eqref{eq:khintchine} with $N=k$,
	$\underline{\zeta}=\underline{\xi}_{k}$  
	to \eqref{eq:letzt} to obtain the bound \eqref{eq:bo1} after
	a short calculation. 
	
	For \eqref{eq:bo2}, we notice that \eqref{eq:schwachmaten} 
	combined with the right estimate in \eqref{eq:ogerman} for $N=n$ 
	and $\underline{\zeta}=\underline{\xi}_{n}$ yields
	\[
	\widehat{w}_{k}(\underline{\xi}_{k}) \geq
	\frac{k-1}{1-\frac{\widehat{\lambda}_{n}(\underline{\xi}_{n})+n-2}{(n-1)(k-1)}}= \frac{(n-1)(k-1)^2}{nk-k-2n+3-\widehat{\lambda}_{n}(\underline{\xi}_{n})}=A,
	\]
	again with $A$ as defined in the theorem. Inserting this and \eqref{eq:letzt} in \eqref{eq:toroben} 
	with $N=k$ and $\underline{\zeta}=\underline{\xi}_{k}$ yields
	\eqref{eq:bo2}.
	\end{proof}

Starting with \eqref{eq:khintchine} in the proof, instead
of \eqref{eq:bo1}
we would have directly obtained Theorem~\ref{gutsatz}.
The proof of Theorem~\ref{schlechtsatz} relies solely on the inequalities in \eqref{eq:ogerman}. 

\begin{proof}[Proof of Theorem~\ref{schlechtsatz}]
Similar to \eqref{eq:nruno} we have 
\[
\widehat{w}_{k}(\underline{\xi}_{k})\geq \widehat{w}_{n}(\underline{\xi}_{n}).
\]
Together with \eqref{eq:ogerman} for $N=n$ and
$\underline{\zeta}=\underline{\xi}_{n}$ we infer
\[
\widehat{w}_{k}(\underline{\xi}_{k}) 
\geq \widehat{w}_{n}(\underline{\xi}_{n}) \geq \frac{n-1}{1-\widehat{\lambda}_{n}(\underline{\xi}_{n})}.
\]
Inserting in the left inequality of \eqref{eq:ogerman} with $N=k$ 
 and $\underline{\zeta}=\underline{\xi}_{k}$ yields the claim.
\end{proof}

\begin{proof}[Proof of Theorem~\ref{consid}]
	We combine 
	\[
	w_{k}(\underline{\xi}_{k})\geq w_{n}(\underline{\xi}_{n}), \qquad
	\widehat{w}_{k}(\underline{\xi}_{k})\geq \widehat{w}_{n}(\underline{\xi}_{n}),
	\]
	with \eqref{eq:toroben} for $N=k$ and
	$\underline{\zeta}=\underline{\xi}_{k}$.
	\end{proof}

\vspace{0.5cm}

{\em The author thanks Yann Bugeaud for fruitful discussions
that helped to improve the paper! The author further thanks the referee
for pointing out several small inaccuracies}


\end{document}